\newtheorem{theorem}{Theorem}
\newtheorem{definition}[theorem]{Definition}
\newtheorem{proposition}[theorem]{Proposition}
\newtheorem{lemma}[theorem]{Lemma}
\newtheorem{corollary}[theorem]{Corollary}
\newtheorem{remark}[theorem]{Remark}
\newtheorem{conjecture}[theorem]{Conjecture}
\newtheorem{question}[theorem]{Question}
\numberwithin{equation}{section}
\numberwithin{theorem}{section}
\title{Higher-dimensional Heegaard Floer homology and Hecke algebras}
\author{Ko Honda}
\address{University of California, Los Angeles, Los Angeles, CA 90095}
\email{honda@math.ucla.edu} \urladdr{http://www.math.ucla.edu/\char126 honda}
\author{Yin Tian}
\address{School of Mathematical Sciences, Beijing Normal University; 
Laboratory of Mathematics and Complex Systems, Ministry of Education, Beijing 100875, China}
\email{yintian@bnu.edu.cn} \urladdr{}
\author{Tianyu Yuan}
\address{Beijing International Center for Mathematical Research, Peking University, Beijing 100871, China}
\email{ytymath@pku.edu.cn} \urladdr{}
\date{\today}
\keywords{Higher-dimensional Heegaard Floer homology, Hecke algebra, skein relation, categorification}
\subjclass[2010]{Primary 53D10; Secondary 53D40.}
\thanks{KH supported by NSF Grants DMS-1406564, DMS-154914, and DMS-2003483. YT supported by NSFC 11971256.}
\begin{document}

\maketitle

\begin{abstract}
Given a closed oriented surface $\Sigma$ of genus greater than 0, we construct a map $\mathcal{F}$ from the wrapped higher-dimensional Heegaard Floer homology of the cotangent fibers of $T^*\Sigma$ to the Hecke algebra associated to $\Sigma$ and show that $\mathcal{F}$ is an isomorphism of algebras. 
We also establish analogous results for punctured surfaces.
\end{abstract}

\tableofcontents

\section{Introduction}

In \cite{colin2020applications}, Colin, Honda and Tian developed the foundations of the higher-dimensional Heegaard Floer homology (HDHF).  
HDHF is supposed to model the Fukaya category of the Hilbert scheme (or flag Hilbert scheme) of points on a Liouville domain and was used to analyze symplectic fillability questions in higher-dimensional contact topology.
As an application, the HDHF of the $4$-dimensional Milnor fibration of type $A$ provided an invariant of links in $S^3$. 
This invariant is a close cousin of symplectic Khovanov homology \cite{seidel2006link, manolescu2006nilpotent} and especially its cylindrical reformulation \cite{mak2020fukayaseidel}. 
As a categorified quantum invariant, Khovanov homology is directly related to the categorification of quantum groups and to various Hecke algebras, including affine Hecke algebras and quiver Hecke algebras (also called KLR algebras) \cite{chuang2008derived, rouquier20082kacmoody, khovanov2009diagrammatic}. 
There are several approaches to these Hecke algebras from the point of view of geometric representation theory: The affine Hecke algebra can be realized as the equivariant $K$-theory of the ordinary Steinberg variety \cite{kazhdan1987proof, chriss2009representation, lusztig1998bases}; the double affine Hecke algebra admits a similar realization in terms of the loop Steinberg variety \cite{vasserot2005induced}. 
Moreover, the rational double affine Hecke algebra of type $A$ is closely related to the Hilbert scheme of points on $\mathbb{C}^2$; see \cite{gordon2005rational,kashiwara2008}. 
It is therefore natural to ask: 
~\\

\noindent
\textit{Question:} Is there a symplectic geometry interpretation of the various Hecke algebras?
~\\

The goal of this paper is to use the HDHF of cotangent bundles of oriented surfaces
to give an answer for the various Hecke algebras of type $A$, including the finite, affine, and double affine Hecke algebras (abbreviated DAHA and also called Cherednik algebras).  
We consider two cases:
\begin{enumerate} 
\item a closed oriented surface $\Sigma$ of genus $g>0$; 
\item a surface $\mathring{\Sigma}$ which is obtained from a closed oriented surface of genus $g\geq 0$ by removing a finite number ($>0$) of punctures.
\end{enumerate}
More precisely, we realize the various Hecke algebras as the HDHF of the disjoint cotangent fibers of $T^*\Sigma$ (or $T^*\mathring{\Sigma}$). 

\begin{center}
    \begin{tabular}{|c|c|} 
        \hline
        {\em Type of Hecke algebra} & {\em Surface} \\ 
        \hline
         finite Hecke algebra & open disk \\ 
        \hline
        affine Hecke algebra & cylinder \\
        \hline
        DAHA & torus\\ 
        \hline
    \end{tabular}
\end{center}

\begin{remark}
    Note that we exclude the case of a sphere which is more complicated since the homology of its based loop space is not supported in degree zero. We hope to revisit this in a future paper.
\end{remark}

\begin{remark}
    Recently, Ben-Zvi, Chen, Helm and Nadler identified the affine Hecke algebra with the endomorphism algebra of the coherent Springer sheaf, for any reductive algebraic group \cite[Theorem 1.7]{ben2020coherent}. 
    It would be interesting to study the connection between the algebro-geometric realization of the affine Hecke algebra of type $A$ and our symplectic geometry one.  
\end{remark}

\begin{remark}
    In this paper we consider $T^*\Sigma$ as a symplectic manifold, not as a holomorphic symplectic manifold corresponding to the Riemann surface $\Sigma$, which is more natural in many contexts (e.g., \cite[Chapter 7]{nakajima1999lectures}).
\end{remark}

In addition to the definition of HDHF, this work crucially depends on three key ingredients: 
\begin{enumerate} 
    \item the relationship between the wrapped Floer cochain complex of a cotangent fiber and chains on the based loop space of the base due to Abbondandolo and Schwarz \cite{abbondandolo2010floer} and Abouzaid \cite{abouzaid2012wrapped};
    \item an interpretation of the HOMFLY skein relation in terms of holomorphic curve counting due to Ekholm and Shende \cite{ekholm2021skeins};  
    \item a topological description of DAHA of $\mathfrak{gl}_{\kappa}$ as a {\em braid skein algebra} due to Morton and Samuelson \cite{morton2021dahas} (we have been informed that this was also known to Cherednik but unpublished). 
\end{enumerate}

Our first ingredient is the result of Abbondandolo-Schwarz \cite{abbondandolo2010floer} which states that the wrapped Floer cochain complex $CW^*(T^*_q\Sigma)$ of a cotangent fiber $T^*_q\Sigma$ and the chain complex $C_{-*}(\Omega_q\Sigma)$ of the based loop space of $\Sigma$ are isomorphic as graded algebras on the cohomology level. 
Abouzaid \cite{abouzaid2012wrapped} further improved this to an $A_\infty$-equivalence on the chain level.

In this paper we investigate its generalization to HDHF. 
More precisely, we consider $CW(\sqcup_{i=1}^{\kappa}T_{q_i}^*\Sigma)$, the wrapped HDHF cochain complex of $\kappa$ disjoint cotangent fibers of $T^*\Sigma$; it can be given the structure of an $A_{\infty}$-algebra. 
The HDHF complex $CW(\sqcup_{i}T_{q_i}^*\Sigma)$ is defined over $\mathbb{Z}[[\hbar]]$, the ring of formal power series in $\hbar$, where $\hbar$ keeps track of the Euler characteristic of the holomorphic curves that are counted in the definition of the $A_{\infty}$-operations.
Since $\Sigma$ is a surface $\not=S^2$, $CW(\sqcup_{i}T_{q_i}^*\Sigma)$ is supported in degree zero and hence is an ordinary algebra. 

Our generalization of the based loop space of $\Sigma$ will be the based loop space of the unordered configuration space $\mathrm{UConf}_{\kappa}(\Sigma)$ of $\kappa$ points on $\Sigma$.  
Generalizing Abouzaid's map $CW^*(T^*_q\Sigma) \to C_{-*}(\Omega_q\Sigma)$, we define an evaluation map 
$$\mathcal{E}\colon CW(\sqcup_{i}T_{q_i}^*\Sigma) \to C_0(\Omega(\mathrm{UConf}_{\kappa}(\Sigma))) \otimes \mathbb{Z}[[\hbar]].$$
Here $C_0(\Omega(\mathrm{UConf}_{\kappa}(\Sigma)))$ is the $0$th chain space of the based loop space of $\mathrm{UConf}_{\kappa}(\Sigma)$ and all tensor products are over $\mathbb{Z}$, unless indicated otherwise.  
The map $\mathcal{E}$ is given by counting curves of ``Heegaard Floer type''; the precise definition will be given in Section \ref{subsection-ev}.
This map however fails to be a homomorphism of algebras due to an additional degeneration of curves: the nodal degeneration. 
This phenomenon was recently clarified by Ekholm and Shende \cite{ekholm2021skeins} (building on unpublished work of Fukaya on the relationship of higher-genus Lagrangian Floer homology and string topology): It is the HOMFLY skein relation that controls the boundaries of $1$-dimensional moduli spaces of varying Euler characteristics; see Figure \ref{fig-skein-brane}. 

Starting with the map $\mathcal{E}$, taking the homology of both sides and quotienting out by the HOMFLY skein relation, we obtain a map 
    \begin{equation}
        \mathcal{F}\colon HW(\sqcup_{i}T_{q_i}^*\Sigma)\to H_0(\Omega(\mathrm{UConf}_{\kappa}(\Sigma))) \otimes \mathbb{Z}[[\hbar]] / \mbox{\{the skein relation\}}.
        \label{eq-main1}
    \end{equation}
Here, $HW(\sqcup_{i}T_{q_i}^*\Sigma)$ denotes the homology of the wrapped HDHF complex $CW(\sqcup_{i}T_{q_i}^*\Sigma)$. 
The map $\mathcal{F}$ is an algebra homomorphism; see Proposition \ref{prop-algebra}.

At this point we observe that $H_0(\Omega(\mathrm{UConf}_{\kappa}(\Sigma)))$ is isomorphic to the group algebra of the surface braid group of $\Sigma$ over $\mathbb{Z}$. In \cite{morton2021dahas}, Morton and Samuelson defined the {\em braid skein algebra} $\mathrm{BSk}_\kappa(\Sigma)$, which is a quotient of the group algebra of the surface braid group over $\mathbb{Z}[s^{\pm1},c^{\pm1}]$ by the skein relation and the marked point relation; see Definition \ref{def-skein}. 
Here $s$ and $c$ are parameters that appear in the skein and marked point relations, respectively.   
We define the {\em surface Hecke algebra} $\mathrm{H}_\kappa(\Sigma)$ by reformulating the marked point relation as a $c$-deformed homotopy relation, and making a change of variables $\hbar=s-s^{-1}$; see Definition \ref{def-hecke}. 
We show that $\mathrm{H}_\kappa(\Sigma)$ and $\mathrm{BSk}_\kappa(\Sigma)$ are isomorphic, up to a change of variables. 

Motivated by the $c$-deformed homotopy relation, we consider $CW(\sqcup_{i}T_{q_i}^*\Sigma)_c$, the wrapped HDHF with a parameter $c$.  
Adding the parameter $c$ to the map in (\ref{eq-main1}), we obtain
    \begin{equation}
        \mathcal{F}\colon HW(\sqcup_{i}T_{q_i}^*\Sigma)_c \to \mathrm{H}_\kappa(\Sigma) \otimes_{\mathbb{Z}[\hbar]} \mathbb{Z}[[\hbar]],
        \label{eq-main}
    \end{equation}
which is still denoted $\mathcal{F}$ by abuse of notation.




We then apply the Abbondandolo-Schwarz result \cite{abbondandolo2006floer} to show that the restriction of $\mathcal{F}$ to $\hbar=0$ is an isomorphism.
The following is the main result of this paper and directly follows from the isomorphism of $\mathcal{F}|_{\hbar=0}$:


\begin{theorem}
    \label{thm-main}
    The map $\mathcal{F}$ in (\ref{eq-main}) is an isomorphism of algebras.
\end{theorem}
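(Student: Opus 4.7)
The plan is to exploit the fact that Proposition \ref{prop-algebra} already gives us that $\mathcal{F}$ is a homomorphism of algebras, so the remaining task is purely to verify bijectivity as a map of $\mathbb{Z}[[\hbar]]$-modules. Both the source $HW(\sqcup_i T^*_{q_i}\Sigma)_c$ and the target $\mathrm{H}_\kappa(\Sigma) \otimes_{\mathbb{Z}[\hbar]} \mathbb{Z}[[\hbar]]$ are, by construction, $\hbar$-adically complete and separated, so the natural strategy is to reduce to the specialization $\hbar = 0$ and bootstrap back up by a successive approximation argument.

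The first step is to pin down $\mathcal{F}|_{\hbar=0}$ on both sides. On the target, $\hbar = s - s^{-1}$ so setting $\hbar = 0$ forces the HOMFLY skein relation to become trivial, and the surface Hecke algebra $\mathrm{H}_\kappa(\Sigma)$ collapses to the group algebra of the surface braid group $\mathrm{Br}_\kappa(\Sigma)$, which coincides with $H_0(\Omega(\mathrm{UConf}_\kappa(\Sigma)))$. On the source, the weight $\hbar$ tracks the Euler characteristic of counted curves, so passing to $\hbar = 0$ retains only contributions from curves of maximal Euler characteristic — these are precisely the disk counts that define the ordinary wrapped Floer cohomology of $\kappa$ disjoint cotangent fibers, reinterpreted on the configuration space after quotienting by the $S_\kappa$-action that identifies $\sqcup_i T^*_{q_i}\Sigma$ with a cotangent fiber of $T^*\mathrm{UConf}_\kappa(\Sigma)$. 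The Abbondandolo--Schwarz theorem \cite{abbondandolo2006floer}, applied to the manifold $\mathrm{UConf}_\kappa(\Sigma)$, then identifies this wrapped Floer cohomology with $H_*(\Omega(\mathrm{UConf}_\kappa(\Sigma)))$; the genus hypothesis $g > 0$ ensures $\Omega(\mathrm{UConf}_\kappa(\Sigma))$ has homology concentrated in degree zero, so the identification is one of ordinary algebras and $\mathcal{F}|_{\hbar=0}$ is an isomorphism.

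Granted the $\hbar = 0$ isomorphism, the bootstrap to $\mathbb{Z}[[\hbar]]$ is standard. For surjectivity, given $y$ in the target, inductively construct $x_n$ in the source so that $\mathcal{F}\bigl(\sum_{i \leq n} \hbar^i x_i\bigr) \equiv y \pmod{\hbar^{n+1}}$; each step uses the fact that $\mathcal{F}|_{\hbar=0}$ hits the image of the residual error, and the $\hbar$-adic completeness of the source guarantees that $x = \sum_{i \geq 0} \hbar^i x_i$ converges to a preimage of $y$. For injectivity, if $\mathcal{F}(x) = 0$ then $x \equiv 0 \pmod{\hbar}$ by injectivity at $\hbar = 0$, so $x = \hbar x'$; since the target is $\hbar$-torsion-free, $\mathcal{F}(x') = 0$ and the argument iterates to place $x \in \bigcap_n \hbar^n \cdot (\text{source}) = 0$ by separatedness.

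The main obstacle I expect is not the completion argument itself but rather the configuration-space version of Abbondandolo--Schwarz needed at $\hbar = 0$: one has to check that the evaluation map $\mathcal{E}$ restricted to the lowest-energy / top-Euler-characteristic curves really does model the evaluation map of \cite{abouzaid2012wrapped} for the open manifold $\mathrm{UConf}_\kappa(\Sigma)$, and one must separately verify that both sides of \eqref{eq-main} are $\hbar$-torsion-free (so that the completion argument applies verbatim). Once these two technical points are handled, the theorem follows formally.
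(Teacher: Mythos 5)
The overall shape of your argument — reduce to $\hbar=0$ and bootstrap via $\hbar$-adic successive approximation — is the same as the paper's, and the bootstrap step (surjectivity by Hensel-style lifting, injectivity using that the target is $\hbar$-torsion-free and the source is $\hbar$-adically separated) is sound and matches the paper's proof of Theorem \ref{thm-main} essentially verbatim. The problem is in your identification of what happens at $\hbar=0$, where you make two incorrect and (not coincidentally) mutually compensating claims.

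First, it is not true that $\mathrm{H}_\kappa(\Sigma)|_{\hbar=0}$ is the group algebra of $\mathrm{Br}_\kappa(\Sigma)$. Setting $\hbar=0$ turns the skein relation \eqref{eq-skein} into $\sigma_+ - \sigma_- = 0$, i.e.\ $\sigma_+ = \sigma_-$, which is a genuine nontrivial relation on the braid group algebra: it kills all crossing information. The resulting algebra is the \emph{quotient} of $\mathbb{Z}[\mathrm{Br}_\kappa(\Sigma,\mathbf{q})_c]$ by the symmetric-group relation, and Lemma \ref{lemma-dim-Heckeh=0} computes it to be $\bigl(\otimes_i\,\mathbb{Z}[\pi_1(\Sigma,q_i)_c]\bigr)\rtimes S_\kappa$. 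This is strictly smaller than $\mathbb{Z}[\mathrm{Br}_\kappa(\Sigma)]$ (for $\Sigma = T^2$ and $\kappa\geq 2$, for instance, the right-hand side is the wreath product $\mathbb{Z}^{2\kappa}\rtimes S_\kappa$ while the surface braid group is considerably larger). Second, the HDHF source at $\hbar=0$ is \emph{not} the wrapped Floer cohomology of a cotangent fiber of $T^*\mathrm{UConf}_\kappa(\Sigma)$. The HDHF differential and products count $\kappa$-fold branched covers of the $A_\infty$ base with Lagrangian boundary conditions in the single $T^*\Sigma$; restricting to $\chi=\kappa$ means $\kappa$ disjoint disks, and the algebra one obtains is $\bigl(\otimes_i HW(T_{q_i}^*\Sigma)_c\bigr)\rtimes S_\kappa$ as in Lemma \ref{lemma-iso-h-0} — a statement that has to be verified from the moduli-space definitions, not read off from a purported equivalence with Floer theory on $T^*\mathrm{UConf}_\kappa(\Sigma)$. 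The paper never uses, and does not need, any configuration-space version of Abbondandolo--Schwarz; it only needs the $\kappa=1$ case together with the explicit decomposition of both sides at $\hbar=0$ and a check that $\mathcal{F}_0$ respects the factors (acting as $\tilde{\mathcal{F}}_c^{\otimes\kappa}$ on the tensor factor and as the identity on $S_\kappa$).

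So the skeleton of your argument is fine, but the crucial ``$\mathcal{F}|_{\hbar=0}$ is an isomorphism'' step is asserted rather than proved, and the asserted identifications of both sides at $\hbar=0$ are incorrect. You would need to replace them with the decompositions of Lemma \ref{lemma-dim-Heckeh=0} (target) and Lemma \ref{lemma-iso-h-0} (source), and then verify compatibility of $\mathcal{F}_0$ with those decompositions, which is exactly the content of Lemma \ref{lemma-iso-h-0} in the paper.
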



The main result from Morton-Samuelson \cite{morton2021dahas} is that the double affine Hecke algebra $\ddot{\mathrm{H}}_{\kappa}$ of $\mathfrak{gl}_{\kappa}$ is naturally isomorphic to $\mathrm{BSk}_\kappa(T^2)$. Hence we have:

\begin{corollary}
The algebra $HW(\sqcup_{i}T_{q_i}^*T^2)_c$ is isomorphic to the tensor product $\ddot{\mathrm{H}}_{\kappa}|_{\hbar=s-s^{-1}} \otimes_{\mathbb{Z}[\hbar]} \mathbb{Z}[[\hbar]]$.  
\end{corollary}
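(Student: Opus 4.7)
The plan is to obtain the corollary as an immediate composition of three isomorphisms: the main theorem specialized to $\Sigma = T^2$, the identification of the surface Hecke algebra with the braid skein algebra under a change of variables, and the Morton-Samuelson identification of $\mathrm{BSk}_\kappa(T^2)$ with the double affine Hecke algebra $\ddot{\mathrm{H}}_\kappa$.

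First, I would apply Theorem \ref{thm-main} with $\Sigma = T^2$ to obtain an algebra isomorphism
$$\mathcal{F}\colon HW(\sqcup_{i}T_{q_i}^*T^2)_c \xrightarrow{\sim} \mathrm{H}_\kappa(T^2) \otimes_{\mathbb{Z}[\hbar]} \mathbb{Z}[[\hbar]].$$
Next, I would invoke the isomorphism (discussed in the paragraph preceding Definition \ref{def-hecke}) between $\mathrm{H}_\kappa(\Sigma)$ and $\mathrm{BSk}_\kappa(\Sigma)$ under the substitution $\hbar = s - s^{-1}$, specialized to the torus; this gives $\mathrm{H}_\kappa(T^2) \cong \mathrm{BSk}_\kappa(T^2)\big|_{\hbar = s - s^{-1}}$ as $\mathbb{Z}[\hbar, c^{\pm 1}]$-algebras. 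Finally, the Morton-Samuelson theorem provides the identification $\mathrm{BSk}_\kappa(T^2) \cong \ddot{\mathrm{H}}_\kappa$, and combining these three steps yields the stated isomorphism after base change from $\mathbb{Z}[\hbar]$ to $\mathbb{Z}[[\hbar]]$.

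The only nontrivial point to verify is bookkeeping: one has to check that the $c$-parameters on both sides correspond (the $c$ entering the HDHF deformation matches the marked-point parameter $c$ on the Morton-Samuelson side), that the change of variables $\hbar = s - s^{-1}$ is compatible with the completion $\otimes_{\mathbb{Z}[\hbar]} \mathbb{Z}[[\hbar]]$, and that the composed map is indeed an algebra homomorphism over the correct base ring. None of these steps introduces genuine difficulty; the content of the corollary lies entirely in Theorem \ref{thm-main} and in the Morton-Samuelson theorem, with this proof serving only to string the identifications together.
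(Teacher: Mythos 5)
Your proposal is correct and follows exactly the route the paper intends: Theorem~\ref{thm-main} gives $HW(\sqcup_i T^*_{q_i}T^2)_c \cong \mathrm{H}_\kappa(T^2)\otimes_{\mathbb{Z}[\hbar]}\mathbb{Z}[[\hbar]]$, Proposition~\ref{prop-BSkHecke} identifies $\mathrm{H}_\kappa(T^2)$ with $\mathrm{BSk}_\kappa(T^2)|_{\hbar=s-s^{-1}}$, and Morton--Samuelson identifies $\mathrm{BSk}_\kappa(T^2)$ with $\ddot{\mathrm{H}}_\kappa$; composing gives the corollary. The paper treats this as immediate, and your write-up supplies precisely the same chain of identifications with appropriate attention to the base rings and $c$-parameter matching.
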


When the base $\mathring{\Sigma}$ has punctures, one can similarly define the HDHF $A_{\infty}$ algebra $CW(\sqcup_{i}T_{q_i}^*\mathring{\Sigma})$. 
It is possible to formally include a $c$-parameter, but we expect that it does not yield any extra information. 
The isomorphism (\ref{eq-main}) still holds and the corresponding surface Hecke algebra $\mathrm{H}_\kappa(\mathring{\Sigma})$ is isomorphic to the finite Hecke algebra $\mathrm{H}_{\kappa}$ and the affine Hecke algebra $\dot{\mathrm{H}}_{\kappa}$ of $\mathfrak{gl}_{\kappa}$ when $\mathring{\Sigma}$ is an open disk and a cylinder, respectively. 

\begin{corollary}
The algebra $HW(\sqcup_{i}T_{q_i}^*\mathring{\Sigma})$ is isomorphic to the tensor product $\mathrm{H}_{\kappa}|_{\hbar=s-s^{-1}} \otimes_{\mathbb{Z}[\hbar]} \mathbb{Z}[[\hbar]]$ (resp.\ $\dot{\mathrm{H}}_{\kappa}|_{\hbar=s-s^{-1}} \otimes_{\mathbb{Z}[\hbar]} \mathbb{Z}[[\hbar]]$), when $\mathring{\Sigma}$ is an open disk (resp.\ a cylinder). 
\end{corollary}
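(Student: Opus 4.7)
The plan is to derive this corollary as an immediate consequence of the punctured-surface version of Theorem \ref{thm-main}, combined with a purely combinatorial identification of the surface Hecke algebra $\mathrm{H}_\kappa(\mathring{\Sigma})$ with $\mathrm{H}_\kappa$ or $\dot{\mathrm{H}}_\kappa$ in the two special cases.

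First I would invoke the punctured-surface analogue of Theorem \ref{thm-main} (stated just before the corollary) to obtain an algebra isomorphism
\[
\mathcal{F}\colon HW\bigl(\sqcup_{i} T^*_{q_i}\mathring{\Sigma}\bigr) \xrightarrow{\sim} \mathrm{H}_\kappa(\mathring{\Sigma}) \otimes_{\mathbb{Z}[\hbar]} \mathbb{Z}[[\hbar]],
\]
with $\hbar = s - s^{-1}$, where the $c$-parameter is absent because, as the authors explicitly remark, its formal inclusion carries no additional information in the punctured setting.

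Next I would identify $\mathrm{H}_\kappa(\mathring{\Sigma})$ with the standard Hecke algebras. For $\mathring{\Sigma}=D^2$, the surface braid group on $\kappa$ strands is the classical Artin braid group $B_\kappa$, generated by $\sigma_1,\dots,\sigma_{\kappa-1}$ with the usual braid relations; since $\pi_1(D^2)$ is trivial, the $c$-deformed homotopy relation is vacuous and $\mathrm{H}_\kappa(D^2)$ is simply the quotient of $\mathbb{Z}[\hbar][B_\kappa]$ by the HOMFLY skein relation. Under $\hbar = s - s^{-1}$, this relation becomes the quadratic relation $\sigma_i - \sigma_i^{-1} = \hbar$, yielding precisely the standard presentation of $\mathrm{H}_\kappa$. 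For $\mathring{\Sigma}=S^1\times\mathbb{R}$, the surface braid group is the type-$A$ affine braid group $\dot{B}_\kappa$, generated by the usual $\sigma_i$'s together with commuting loop generators $X_1,\dots,X_\kappa$ subject to the affine braid relations. Again the marked-point relation is trivial, and imposing the skein relation on the $\sigma_i$'s produces the quadratic Hecke relation, giving the standard presentation of $\dot{\mathrm{H}}_\kappa$ of $\mathfrak{gl}_\kappa$. Both identifications are the disk- and annulus-specializations of the Morton--Samuelson formalism on which the paper's definition of $\mathrm{H}_\kappa(\Sigma)$ is modeled.

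The main obstacle here is not analytic: all of the serious symplectic geometry is carried out in the proof of Theorem \ref{thm-main}. What remains is a careful verification that the paper's sign and normalization conventions align with the standard ones, so that the HOMFLY skein relation together with $\hbar = s - s^{-1}$ produces the quadratic relation of $\mathrm{H}_\kappa$ and $\dot{\mathrm{H}}_\kappa$ exactly as conventionally written, and that dropping the $c$-parameter in the punctured case does not accidentally collapse any structure. This last point is consistent because no nontrivial marked-point relation needs to be imposed when the base surface is already punctured.
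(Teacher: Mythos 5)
Your proposal is correct and follows essentially the same route as the paper: invoke the punctured-surface analogue of Theorem \ref{thm-main} (proved in Section \ref{section-boundary}), then identify $\mathrm{H}_\kappa(\mathring{\Sigma})$ with the finite or affine Hecke algebra via the classical fact that the braid group of the disk is the Artin braid group $B_\kappa$ and of the cylinder is the extended affine braid group, with the skein relation becoming the quadratic Hecke relation under $\hbar = s - s^{-1}$.
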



Returning to the discussion of categorification, the affine Hecke algebra is related to categorified quantum groups of type $A$ \cite{ariki1996decomposition}. 
This can be explained symplectically by noting that $T^*\mathring{\Sigma}$ for a cylinder $\mathring{\Sigma}$ is symplectomorphic to $\mathbb{R}^2 \times T^*S^1$. The latter naturally appears in the $4$-dimensional Milnor fibration and the HDHF approach to symplectic Khovanov homology \cite{CHT}.  

The isomorphism (\ref{eq-main}) holds only after tensoring with $\mathbb{Z}[[\hbar]]$. 
Nevertheless, we believe that the coefficient ring could be taken to be $\mathbb{Z}[\hbar]$.

\begin{conjecture}
    \label{conj-main}
    The algbera $HW(\sqcup_{i}T_{q_i}^*\Sigma)_c$ is well-defined over $\mathbb{Z}[\hbar]$ and Theorem \ref{thm-main} still holds over $\mathbb{Z}[\hbar]$. 
\end{conjecture}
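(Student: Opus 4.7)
The plan is to establish two claims in sequence: first, that the HDHF $A_{\infty}$-algebra $CW(\sqcup_{i}T_{q_i}^*\Sigma)_c$ admits a model over the polynomial ring $\mathbb{Z}[\hbar]$; second, that under this model the map $\mathcal{F}$ of (\ref{eq-main}) remains an isomorphism. Once the first claim is in hand the map $\mathcal{E}$ of Section \ref{subsection-ev} is automatically $\mathbb{Z}[\hbar]$-linear, since the skein relation and the $c$-deformed homotopy relation in $\mathrm{H}_\kappa(\Sigma)$ are both polynomial (not merely formal power series) in $\hbar = s - s^{-1}$.

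For the first claim, recall that the exponent of $\hbar$ in each $A_{\infty}$-structure constant is $-\chi$ of the holomorphic curve being counted. I would therefore aim at the finiteness statement: \emph{for any fixed collection of input and output generators of $CW(\sqcup_{i}T_{q_i}^*\Sigma)_c$, there exists $N$ such that the contributing moduli space $\mathcal{M}_\chi$ is empty whenever $-\chi>N$.} Two observations drive the strategy. All curves contributing to a fixed structure coefficient project to the base (or rather the $\kappa$-fold symmetric product) as representatives of a \emph{single} surface braid class; and for a fixed braid class a holomorphic curve in $T^*\Sigma$ carries an area that grows linearly in its topological complexity (number of nodes/handles attached during nodal degeneration), whereas the asymptotic conditions bound the total action difference from above. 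Combining the two yields a uniform cap on $-\chi$. This kind of compactness is in the spirit of the arguments of Ekholm-Shende \cite{ekholm2021skeins}, which controls exactly the nodal degenerations that generate $\hbar$ in HDHF.

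Granting the first claim, the second is comparatively formal. The target $\mathrm{H}_\kappa(\Sigma)$ is already defined over $\mathbb{Z}[\hbar]$, and $\mathcal{F}$ now lifts to a $\mathbb{Z}[\hbar]$-algebra homomorphism between two $\mathbb{Z}[\hbar]$-modules whose base change to $\mathbb{Z}[[\hbar]]$ is an isomorphism by Theorem \ref{thm-main}. Since $\mathbb{Z}[\hbar] \hookrightarrow \mathbb{Z}[[\hbar]]$ is faithfully flat, injectivity and surjectivity descend. Equivalently, one can argue by $\hbar$-adic induction with base case $\hbar=0$ given by the Abbondandolo-Schwarz isomorphism \cite{abbondandolo2006floer}, lifting preimages braid-class-by-braid-class and using that in each class only finitely many powers of $\hbar$ contribute.

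The genuinely hard step is the finiteness of contributing Euler characteristics. The danger is an accumulation of ghost bubbles or chains of small-area handle attachments that increase $|\chi|$ without altering the braid class or action. Ruling these out generically will require a transversality analysis at the boundary strata of the Deligne-Mumford-type compactification relevant to HDHF, adapted from the SFT-style framework of \cite{ekholm2021skeins}. Progress on Conjecture \ref{conj-main} should therefore come hand-in-hand with a more refined compactness theorem for HDHF moduli spaces, which we expect to pursue in future work.
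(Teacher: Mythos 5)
This statement is labelled a \emph{Conjecture} in the paper, and the paper offers no proof --- only a remark that direct computations of the second and third authors \cite{TY} (in preparation) verify well-definedness over $\mathbb{Z}[\hbar]$ in the special case $\Sigma=\mathbb{R}^2$. There is therefore no argument in the paper to compare your proposal against, and you are right to treat this as open. Your identification of the crucial step --- a uniform bound on $-\chi$ among curves contributing to a fixed structure constant --- is exactly the right thing to aim at, and is presumably what the in-preparation work establishes by explicit count when $\Sigma=\mathbb{R}^2$.

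However, both of the mechanisms you propose have real problems. For the finiteness claim, the heuristic ``area grows linearly in topological complexity'' fails in the exact setting: for a holomorphic curve in $T^*\Sigma$ with Lagrangian boundary and fixed asymptotics, Stokes' theorem pins the symplectic area to the action difference of the asymptotic chords, \emph{independently} of $\chi$. Note also that in the index formula \eqref{eq-ind} the coefficient of $\chi$ is $n-2=0$ for a surface, so neither area nor index constrains $\chi$; this is precisely why the question is hard and why the conjecture is genuinely open. Whatever bound exists will have to come from finer geometry (e.g.\ intersection/positivity arguments specific to the Lagrangian boundary conditions), not from action or index.

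For the second claim, the passage ``since $\mathbb{Z}[\hbar]\hookrightarrow\mathbb{Z}[[\hbar]]$ is faithfully flat, injectivity and surjectivity descend'' is incorrect: $\hbar$-adic completion is flat but \emph{not} faithfully flat. For instance $\mathbb{Z}[\hbar]/(\hbar-1)\neq 0$, yet
\begin{equation*}
\mathbb{Z}[\hbar]/(\hbar-1)\otimes_{\mathbb{Z}[\hbar]}\mathbb{Z}[[\hbar]]\;\cong\;\mathbb{Z}[[\hbar]]/(\hbar-1)\;=\;0
\end{equation*}
because $\hbar-1$ is a unit in $\mathbb{Z}[[\hbar]]$; a $\mathbb{Z}[\hbar]$-linear map can become an isomorphism after completion without being one before. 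Nor is the alternative $\hbar$-adic induction ``comparatively formal'' even granting Claim~1: the paper's surjectivity argument produces a preimage as an infinite sum $\sum_{i\ge 0}\mathbf{a}_i\hbar^i$, and Claim~1 (finiteness of $\chi$ for each output of $\mathcal{F}$) does not by itself force such a preimage to be a polynomial. Establishing surjectivity over $\mathbb{Z}[\hbar]$ would require an independent termination argument. In short, your proposal correctly names the hard step and is candid that it is unresolved, but the two specific reductions offered --- area-vs-complexity and faithful flatness --- do not hold as stated.
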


There is some evidence for this conjecture:  In particular, direct computations of the second and third authors \cite{TY} (in preparation) show the well-definedness over $\mathbb{Z}[\hbar]$ when $\Sigma=\mathbb{R}^2$ and the surface Hecke algebra is isomorphic to the finite Hecke algebra.

\vskip.2in
\begin{question}
\label{question-change of variables}
    What is the geometric meaning of the change of variables $\hbar=s-s^{-1}$?
\end{question}
Question \ref{question-change of variables} is important from the perspective of representation theory. 
For instance, the affine Hecke algebra with the parameter $s$ has interesting modules, but the situation is not clear for the affine Hecke algebra with the parameter $\hbar$. 
A possible explanation of $\hbar=s-s^{-1}$ may require additional data of flat bundles which is well-studied in mirror symmetry and Fukaya categories.   

On the HDHF side, other $\kappa$-tuples of Lagrangians in $T^*\Sigma$ give rise to modules over $HW(\sqcup_{i}T_{q_i}^*\Sigma)_c$. It is interesting to look at the category of such modules and try to relate it to the Fukaya category of the Hilbert scheme $\mathrm{Hilb}^{\kappa}(T^*\Sigma)$.

\begin{remark}
    For any pair $(X, G)$, where $X$ is a smooth complex algebraic variety with an action of a finite group $G$, Etingof introduced a global analogue of the rational Cherednik algebra, and a Hecke algebra \cite{EtingofPavel}.  
    The two algebras are related by the KZ functor. 
    
    In the case of $(X,G)=(C^{\times n}, S_n)$, where $C$ is an algebraic curve, and the symmetric group $S_n$ acts on the product by permutation, the associated Hecke algebra is likely to be isomorphic to the braid skein algebra. This case was further studied by Finkelberg and Ginzburg via quantum Hamiltonian reduction \cite{Finkelberg}. 
    
    It is interesting to compare our symplectic geometry approach with their algebraic geometry one, and to look for a symplectic explanation of the category $\mathcal{O}$ of rational Cherednik algebras.  
\end{remark}

~\\
\noindent
\textit{Organization:} 
In Section \ref{section-hdhf}, we give a brief review of HDHF. We then restrict to the special case of cotangent bundles of surfaces and define $CW(\sqcup_{i}T_{q_i}^*\Sigma)$. 
In Section \ref{section-loop}, we review the chain complex of the based loop space and the results of \cite{abbondandolo2006floer,abouzaid2012wrapped}. 
In Section \ref{section-Hecke}, we discuss the based loop space of $\mathrm{UConf}_{\kappa}(\Sigma)$, the braid skein algebra $\mathrm{BSk}_\kappa(\Sigma)$, and its reformulation, the surface Hecke algebra $\mathrm{H}_\kappa(\Sigma)$. 
In Section \ref{section-c}, we define $CW(\sqcup_{i}T_{q_i}^*\Sigma)_c$, the HDHF with a parameter $c$. 
In Section \ref{section-f}, we construct the map $\mathcal{F}$ and prove that it is an isomorphism of algebras. 
In Section \ref{section-boundary}, we show that our results can be extended to surfaces with punctures.
~\\

\noindent \textit{Acknowledgements}. KH and YT thank Vincent Colin for \cite{colin2020applications} and also the follow-up work \cite{CHT} in which we give a different Hecke algebra corresponding to a surface and arising from $\Sigma\times T^*S^1$. YT thanks Peng Shan for numerous conversations and for bringing the work of Morton-Samuelson to our attention.

\section{Wrapped higher-dimensional Heegaard Floer cohomology} \label{section-hdhf}
We briefly review HDHF in Section \ref{subsection-reviewwrap} and then turn to the special case of disjoint cotangent fibers of $T^*\Sigma$ in Section \ref{subsection-wrap}.

\subsection{Review of HDHF}
\label{subsection-reviewwrap}
We refer the reader to \cite{colin2020applications} for more details. 

 Let $(X,\alpha)$ be a $2n$-dimensional completed Liouville domain, i.e., there exists a Liouville domain $X^c\subset X$ such that $X$ is obtained from $X^c$ by attaching the symplectization end $\left([0,\infty)_s\times \partial X^c, e^s\alpha|_{\partial X^c}\right)$.
    Let $\omega=d\alpha$ be the exact symplectic form on $X$.

\begin{definition}

    The objects of the $A_\infty$-category $\mathcal{F}_\kappa(X)$ are $\kappa$-tuples of disjoint exact Lagrangians which are equipped with relative spin structures and grading data, and, if not compact, are cylindrical at infinity. 

    Given two objects $L_i=L_{i1}\sqcup\dots\sqcup L_{i\kappa}$, $i=0,1$, whose components are mutually transverse, the morphism $\mathrm{Hom}_{\mathcal{F}_\kappa(X)}(L_0,L_1)=CF(L_0,L_1)$ is the free abelian group generated by all $\mathbf{y}=\{y_{1},\dots,y_{\kappa}\}$ where $y_{j}\in L_{0j}\cap L_{1\sigma(j)}$ and $\sigma$ is some permutation of $\{1,\dots,\kappa\}$. 
    The coefficient ring is set to be $\mathbb{Z}[[\hbar]]$. 
    The $A_\infty$-operations $\mu^m$, $m=1,2,\dots,$ will be defined by (\ref{eq-m_2}).
\end{definition}

To define the $A_\infty$ operation $\mu^m$, for $i=1,\dots,m$, let $L_i=\sqcup_{j=1}^\kappa L_{ij}$ so that $L_{i-1}$ and $L_{i}$ are transverse. Let $\mathbf{y}_i=\{y_{i1},\dots,y_{i\kappa}\}$ be a $\kappa$-tuple of points so that $y_{ij}\in L_{(i-1)j}\cap L_{i\sigma_i(j)}$ where $\sigma_i$ is some permutation of $\{1,\dots,\kappa\}$.

As in the cylindrical reformulation of Heegaard Floer homology by Lipschitz \cite{lipshitz2006cylindrical}, we also need an extra ``cylindrical'' direction to keep track of points in ``$\mathrm{Sym}^\kappa(X)$''. Specifically, as shown in Figure \ref{fig-base}, let $D$ be the unit disk in $\mathbb{C}$ and $D_m=D-\{p_0,\dots,p_m\}$, where $p_i\in\partial D$ are boundary marked points arranged counterclockwise. Let $\partial_i D_m$ be the boundary arc from $p_i$ to $p_{i+1}$. 
Let $\mathcal{A}_m$ be the moduli space of $D_m$ modulo automorphisms; we choose representatives $D_m$ of equivalence classes of $\mathcal{A}_m$ in a smooth manner (e.g., by setting $p_0=-i$ and $p_1=i$) and abuse notation by writing $D_m\in \mathcal{A}_m$.
We call $D_m$ the ``$A_\infty$ base direction''.

\begin{figure}[ht]
    \centering
    \includegraphics[width=5cm]{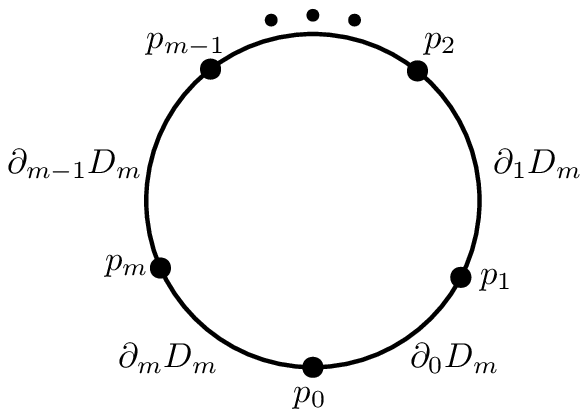}
    \caption{}
    \label{fig-base}
\end{figure}

The full ambient symplectic manifold is then $(D_m\times X,\,\Omega_m=\omega_m+\omega)$, where $\omega_m$ is an area form on $D_m$ which restricts to $ds_i\wedge dt_i$ on the strip-like end $e_i\simeq [0,\infty)_{s_i}\times[0,1]_{t_i}$ around $p_i$ for $i=1,\dots,m$ and $e_i\simeq (-\infty,0]_{s_i}\times[0,1]_{t_i}$ around $p_i$ for $i=0$. (As we approach the puncture $p_i$, $s_i\to +\infty$ for $i=1,\dots,m$ and $s_i\to -\infty$ for $i=0$.) Then we extend the Lagrangians to the $A_\infty$ base direction: for $i=0,\dots,m$, let $\tilde{L}_i=\partial_iD_m\times L_i$ and $\tilde{L}_{ij}=\partial_iD_m\times L_{ij}$. 
Let $\pi_X: D_m\times X\to X$ be the projection to $X$ and 
$\pi_{D_m}$ be the symplectic fibration
\begin{equation*}
    \pi_{D_m}\colon (D_m\times X,\Omega_m)\to (D_m,\omega_m).
\end{equation*}

Let $\mathcal{J}_{X,\alpha}$ be the set of $d\alpha$-compatible almost complex structures $J_X$ on $(X,\omega)$ that are asymptotic to an almost complex structure on $[0,\infty)_s\times \partial X^c$ that takes $\partial_s$ to the Reeb vector field of $\alpha|_{\partial X^c}$, takes $\ker \alpha|_{\partial X^c}$ to itself, and is compatible with $d\alpha|_{\partial X^c}$. 

There is a smooth assignment 
$D_m\mapsto J_{D_m}$, where $D_m\in \mathcal{A}_m$, such that:
\begin{enumerate}
    \item[(J1)] on each fiber $\pi_{D_m}^{-1}(p)=\{p\}\times X$, $J_{D_m}$ restricts to an element of $\mathcal{J}_{X,\alpha}$;
    \item[(J2)] $J_{D_m}$ projects holomorphically onto $D_m$;
    \item[(J3)] over each strip-like end $[0,\infty)_{s_i}\times[0,1]_{t_i}$, for $s_i$ sufficiently positive, or $(-\infty,0]_{s_0}\times[0,1]_{t_0}$, for $s_0$ sufficiently negative, $J_{D_m}$ is invariant in the $s_i$-direction and takes $\partial_{s_i}$ to $\partial_{t_i}$; when $m=1$, $J_{D_1}$ is invariant under $\mathbb{R}$-translation of the base and takes $\partial_{s_i}$ to $\partial_{t_i}$.
\end{enumerate}
One can inductively construct such an assignment for all $m\geq1$ in a manner which is (A) consistent with the boundary strata 
and (B) for which all the moduli spaces $\mathcal{M}({\bf y}_1,\dots, {\bf y}_m, {\bf y}_0)$, defined below, are transversely cut out. 
A collection $\{ J_{D_m}~|~D_m\in \mathcal{A}_m,~m\in \mathbb{Z}_{>0}\}$ satisfying (A) will be called a {\em consistent collection} of almost complex structures; if it satisfies (B) in addition, it is a {\em sufficiently generic} consistent collection.

\begin{remark}  
    To avoid cumbersome terminology, in what follows, when we say {\em sufficiently generic}, we mean that all the moduli spaces under consideration are transversely cut out.
\end{remark}

Let $\mathcal{M}(\mathbf{y}_1,\dots,\mathbf{y}_m,\mathbf{y}_0)$ be the moduli space of maps
\begin{equation*}
    u\colon (\dot F,j)\to(D_m\times X,J_{D_m}),
\end{equation*}
where $(F,j)$ is a compact Riemann surface with boundary, $\mathbf{p}_0,\dots,\mathbf{p}_m$ are disjoint $\kappa$-tuples of boundary marked points of $F$, $\dot F=F\setminus\cup_i \mathbf{p}_i$, and $D_m\in \mathcal{A}_m$, so that $u$ satisfies
\begin{align}
    \label{floer-condition}
    \left\{
        \begin{array}{ll}
            \text{$du\circ j=J_{D_m}\circ du$;}\\
            \text{each component of $\partial \dot F$ is mapped to a unique $\tilde{L}_{ij}$;}\\
            \text{$\pi_X\circ u$ tends to $\mathbf{y}_i$ as $s_i\to+\infty$ for $i=1,\dots,m$;}\\
            \text{$\pi_X\circ u$ tends to $\mathbf{y}_0$ as $s_0\to-\infty$;}\\
            \text{$\pi_{D_m}\circ u$ is a $\kappa$-fold branched cover of $D_m$.}
        \end{array}
    \right.
\end{align}
With the identification of the strip-like end $e_i$, $i=1,\dots,m$, with $[0,\infty)_{s_i}\times[0,1]_{t_i}$, the 3rd condition means that $u$ maps the neighborhoods of the punctures of $\mathbf{p}_i$ asymptotically to the Reeb chords $[0,1]_{t_i}\times \mathbf{y}_i$ as $s_i\to +\infty$. The 4th condition is similar. 

The $\mu^m$-composition map of $\mathcal{F}_\kappa(X)$ is then defined as
\begin{equation}
\label{eq-m_2}
    \mu^m(\mathbf{y}_1,\dots,\mathbf{y}_m)=\sum_{\mathbf{y}_0,\chi\leq\kappa}\#\mathcal{M}^{\mathrm{ind}=0,\chi}(\mathbf{y}_1,\dots,\mathbf{y}_m,\mathbf{y}_0)\cdot\hbar^{\kappa-\chi}\cdot\mathbf{y}_0,
\end{equation}
where the superscript $\chi$ denotes the Euler characteristic of $F$; the symbol $\#$ denotes the signed count of the corresponding moduli space.

\begin{theorem}
    \label{thm-ind}
    The Fredholm index of $\mathcal{M}^\chi(\mathbf{y}_1,\dots,\mathbf{y}_m,\mathbf{y}_0)$ is
    \begin{equation}
        \label{eq-ind}
        \mathrm{ind}(u)=(n-2)\chi+\mu+2\kappa-m\kappa n+m-2,
    \end{equation}
    where $\mu$ is the Maslov index of $u$, defined as in \cite[Section 4]{colin2020applications}.
\end{theorem}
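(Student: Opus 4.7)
The plan is to compute $\mathrm{ind}(u)$ using the standard Riemann--Roch formula for open Riemann surfaces with strip-like ends and Lagrangian boundary conditions, exploiting the symplectic fibration $\pi_{D_m}$, and then incorporating the moduli of conformal data.

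First, using property (J2), the pullback bundle splits as $u^{*}T(D_m\times X)\cong u^{*}(TD_m)\oplus u^{*}(TX)$, compatibly with the Lagrangian boundary condition $u^{*}T\tilde{L}$; since $\pi_{D_m}$ is holomorphic, the linearised Cauchy--Riemann operator $D_u$ preserves this splitting up to a compact operator, and its Fredholm index is the sum of the two pieces. The vertical summand is a Cauchy--Riemann operator on a rank-$n$ complex bundle with totally real boundary condition in $u^{*}TL$ and transverse intersection asymptotics at the $\mathbf{y}_i$; with the Maslov normalisation of \cite[Section~4]{colin2020applications}, the open-surface Riemann--Roch theorem contributes $n\chi(\dot F)+\mu$.

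For the horizontal piece, I would use the fact that $\pi\coloneqq\pi_{D_m}\circ u$ is a holomorphic degree-$\kappa$ branched cover of $D_m$ with $\kappa$ preimages over each of the $m+1$ boundary punctures. The Riemann--Hurwitz formula gives $\chi(\dot F)=\kappa\chi(D_m)-B$ for the branching number $B$; combined with $\chi(D_m)=-m$ and $\chi(\dot F)=\chi-(m+1)\kappa$ this simplifies to $B=\kappa-\chi$, consistent with the constraint $\chi\leq\kappa$ in the theorem. The horizontal contribution to the Fredholm index, after incorporating the moduli of branch-point positions and the canonical triviality of $T\partial D_m$ over the simply-connected $D_m$, then reduces to a purely topological expression in $\chi$, $\kappa$, and $m$ that I would compute directly.

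Finally, I would add the contribution from the variation of the conformal data on the base, namely $\dim\mathcal{A}_m=m-2$ for $m\geq 2$ (the case $m=1$ is handled by quotienting by the $\mathbb{R}$-translation, which yields the same count), sum all contributions, and substitute $\chi(\dot F)=\chi-(m+1)\kappa$ to produce the claimed $(n-2)\chi+\mu+2\kappa-m\kappa n+m-2$. The main obstacle is the careful bookkeeping of the horizontal piece: one must avoid double-counting the branch-point positions against both the horizontal Fredholm index and $\mathcal{A}_m$, and pin down the constant term from the Maslov of the horizontal boundary condition $T\partial D_m$. A clean approach, which I would follow, is to first fix representatives of $D_m\in\mathcal{A}_m$ and of the complex structure on $\dot F$, compute the resulting fixed-data index, and only then add the conformal variations; reducing to the special case $\kappa=1$, where the formula must recover the standard open-string $A_\infty$ index on Lagrangians in $X$, provides a useful consistency check.
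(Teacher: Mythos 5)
The paper gives no proof of Theorem~\ref{thm-ind} — it simply states the formula and defers to \cite[Section~4]{colin2020applications} for the definition of $\mu$ — so your proposal is really an attempt to supply a proof from scratch, and must be judged on whether it actually closes the argument. It does not. The vertical/horizontal splitting via (J2), the identity $\chi(\dot F)=\chi-(m+1)\kappa$, and the Riemann--Hurwitz computation $B=\kappa-\chi$ are all correct and are useful groundwork. But at the two places where the index actually gets produced, you write ``I would compute directly'' and ``sum all contributions \dots\ to produce the claimed'' formula, without doing the computation. That is precisely the content of the theorem, and it is exactly the step you yourself flag as the ``main obstacle,'' namely the bookkeeping of the horizontal Fredholm index against the branch-point moduli and against $\dim\mathcal{A}_m$.

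There is also reason to doubt one of your structural claims. If one grants the standard vertical contribution $n\chi(\dot F)+\mu$ and the base-moduli term $m-2$, and solves for the horizontal contribution $H$ that would make the total equal $(n-2)\chi+\mu+2\kappa-m\kappa n+m-2$, one finds
\begin{equation*}
H \;=\; 2(\kappa-\chi)+n\kappa \;=\; 2B+n\kappa,
\end{equation*}
which is \emph{not} a function of $\chi,\kappa,m$ alone — it carries an explicit $n$. So either your vertical Riemann--Roch term is not in the normalization that the cited $\mu$ uses, or the horizontal piece is not ``purely topological'' in the sense you assert. Either way the statement as written would not assemble to the target formula. There is a further unaddressed subtlety in the horizontal piece: the boundary condition $u^{*}T\partial D_m$ limits, at each puncture $p_i$, to the \emph{same} real line on both incident arcs (both tangent to $\partial D$ at $p_i$), so the asymptotic intersection is degenerate; the $\bar\partial$-index on the trivial line bundle with these conditions is not obtained by a naive Maslov count and needs the branched-cover model (à la Lipshitz) to be pinned down. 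You would need to carry out this horizontal index computation explicitly, check that it reconciles with the vertical normalization of $\mu$ in \cite{colin2020applications}, and then actually perform the $\kappa=1$ consistency check you propose, before this can be considered a proof.
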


If $2c_1(TX)=0$ and the Maslov classes of all involved Lagrangians vanish, then there exists a well-defined $\mathbb{Z}$-grading. In this case, the dimension of $\mathcal{M}^{\chi}(\mathbf{y}_1,\dots,\mathbf{y}_m,\mathbf{y}_0)$ can be rewritten as
\begin{equation}
    \label{eq-grading}
    \mathrm{ind}(u)=(n-2)(\chi-\kappa)+|\mathbf{y}_0|-|\mathbf{y}_1|-\dots-|\mathbf{y}_m|+m-2,
\end{equation}
where $|\mathbf{y}_i|=|y_{i1}|+\dots+|y_{i\kappa}|$. Note also that 
\begin{equation}
    \label{eq-hgrading}
    |\hbar|=2-n.
\end{equation}

We omit the details about the orientation of $\mathcal{M}(\mathbf{y}_1,\dots,\mathbf{y}_m,\mathbf{y}_0)$ and the $A_\infty$-relation, and refer the reader to \cite[Section 4]{colin2020applications}.

\subsection{Wrapped HDHF of disjoint cotangent fibers}
\label{subsection-wrap}

From now on, we restrict to the case $X=T^*{M}$, where $M$ is a compact manifold of dimension $n$. 
Let $\pi_M\colon T^*M\to M$ be the standard projection; by abuse of notation we also denote the projection map $\pi_{M}\circ \pi_{T^*{M}}$ simply by $\pi_{M}$.

The wrapped HDHF category is denoted by $\mathcal{W}_\kappa(T^*M)$.
Let $q_1,\dots,q_\kappa$ be $\kappa$ distinct 
points in a small disk $U\subset{M}$. We discuss the wrapped Heegaard Floer homology of the object $\sqcup_{i}T_{q_i}^*{M}$, where we assume $i$ to range from 1 to $\kappa$ in what follows.
\begin{remark}
    When we take $\sqcup_i T^*_{q_i} M$, the points $q_1,\dots, q_\kappa$ (as well as the Lagrangians $T^*_{q_1}M,\dots,T^*_{q_\kappa}M$) are ordered.
\end{remark}

Let $g$ be a Riemannian metric on ${M}$ and $|\cdot|$ be the induced norm on $T^*{M}$. Choose a time-dependent ``quadratic at infinity" Hamiltonian
\begin{gather}
\label{eq-H}
H_V\colon [0,1]\times T^*{M}\to\mathbb{R},\\
\nonumber    H_V(t,q,p)=\frac{1}{2}|p|^2+V(t,q),
\end{gather}
where $t\in[0,1]$, $q\in M$, $p\in T^*_q M$, and $V$ is some perturbation term with small $W^{1,2}$-norm in the $[0,1]\times M$-direction. The Hamiltonian vector field $X_{H_V}$ with respect to the canonical symplectic form $\omega=dq\wedge dp$ is then given by $i_{X_{H_V}}\omega=dH_V$. Let $\phi^t_{H_V}$ be the time-$t$ flow of $X_{H_V}$. 

By choosing $g$ and $V$ generically, we can guarantee that all Hamiltonian chords of $\phi^t_{H_V}$ between the cotangent fibers $\{T_{q_1}^*{M},\dots,T_{q_\kappa}^*{M}\}$ are nondegenerate. 

\begin{definition}
    \label{def-CW}
     The {\em wrapped Heegaard Floer chain complex} of $CW(\sqcup_{i}T_{q_i}^*{M})$ is $CF(\phi^1_{H_V}(\sqcup_{i}T_{q_i}^*{M}),\sqcup_{i}T_{q_i}^*{M})$.
\end{definition}

There is a subtlety when defining $A_\infty$-operations for wrapped HDHF and we briefly review the usual rescaling argument of \cite[Section 3]{abouzaid2010geometric}.

Start with a consistent collection $\{J_{D_m} ~|~ D_m\in \mathcal{A}_m,~ m\in \mathbb{Z}_{>0}\}$ as before.
We would like to impose Lagrangian boundary conditions $\tilde L_j:=\phi^{m-j}_{H_V}(\sqcup_{i}T_{q_i}^*{M})$ over the arc $\partial_j D_m$, but we need to make a modification near the end $e_0$ since the chain complex $CF(\phi^m_{H_V}(\sqcup_{i}T_{q_i}^*{M}),\sqcup_{i}T_{q_i}^*{M})$ is not naturally isomorphic to $CF(\phi^1_{H_V}(\sqcup_{i}T_{q_i}^*{M}),\sqcup_{i}T_{q_i}^*{M})$.

We will (slightly informally) explain how to modify the $\tilde L_j$ and the family ${J_{D_m}}$ in two steps.

\vskip.15in\noindent
{\em Step 1.} Let  $D_m \in \mathcal{A}_m \setminus N(\partial \mathcal{A}_m)$, where $N(\partial\mathcal{A}_m)$ is a small neighborhood of $\partial\mathcal{A}_m$ in $\overline{\mathcal{A}}_m$. 
Let $\psi^\rho$ be the time-$\mathrm{log}\,\rho$ flow of the Liouville vector field ${p}\partial_{{p}}$ of $(T^*{M}, {p}d{q})$. {(Note that because of the $\operatorname{log}$ term, $(\psi^\rho)^{-1}=\psi^{1/\rho}$.)} There is an isomorphism
\begin{align*}
    CF&(\psi^{\rho}(\phi^1_{H_V}(\sqcup_{i}T_{q_i}^*{M})),\psi^\rho(\sqcup_{i}T_{q_i}^*{M});\psi^\rho_*J_{D_m})\nonumber\\
    \cong CF&(\phi^1_{H_V}(\sqcup_{i}T_{q_i}^*{M}),\sqcup_{i}T_{q_i}^*{M};J_{D_m})\cong CW(\sqcup_{i}T_{q_i}^*{M}).
\end{align*}
Taking $\rho=1/m$, $\psi^{1/m}(\phi^1_{H_V}(\sqcup_{i}T_{q_i}^*{M}))$ limits to $\phi^m_{H_V}(\sqcup_{i}T_{q_i}^*{M})$ as $|{p}|\to\infty$ and $\psi^{1/m}(\sqcup_{i}T_{q_i}^*{M})$ remains the same as $\sqcup_{i}T_{q_i}^*{M}$.

We then modify $\tilde L_j$ only over the output end $e_0$ of $D_m$ to the totally real boundary condition $\tilde L'_j(D_m)$ so that for $s_0\leq -1$, $\tilde L'_0(D_m)$ is the trace of a fixed (but unspecified) exact, identity-at-infinity Lagrangian isotopy from $\phi^m_{H_V}(\sqcup_{i}T_{q_i}^*{M})$ to $\psi^{1/m}(\phi^1_{H_V}(\sqcup_{i}T_{q_i}^*{M}))$ as $s_0\to-\infty$ and $\tilde L'_m(D_m)$ is the identity trace from $\sqcup_{i}T_{q_i}^*{M}$ to $\psi^{1/m}(\sqcup_{i}T_{q_i}^*{M})$ as $s_0\to-\infty$. 

We convert the totally real boundary condition into a Lagrangian one by changing the symplectic form as follows:  First let $G(s_0,t_0,x)$ be an $(s_0,t_0)$-dependent, constant-at-infinity Hamiltonian function on $T^*M$ such that the Hamiltonian vector field $Y_{s_0,t_0}$ satisfying $i_{Y_{s_0,t_0}} \omega = d_{T^*M}G|_{(s_0,t_0)}$ (here $d_{T^*M}G$ is the component of $d$ in the $T^*M$-direction) induces the exact Lagrangian isotopies along $t_0=0,1$.  Then we replace $ds_0\wedge dt_0 +\omega$ by the symplectic form $\Omega= ds_0\wedge (dt_0-dG) +\omega$.  Along $t_0=0,1$, $\Omega=-ds_0\wedge dG +\omega= -ds_0\wedge d_{T^*M}G +\omega$ and the symplectic connection is given by $\partial_s + Y_{s_0,t_0}$.  Hence  $\tilde L'_0(D_m)$ and $\tilde L'_m(D_m)$ become Lagrangian with respect to $\Omega$.

Next 
we modify $J_{D_m}$ only over the output end $e_0$ of $D_m$ to an almost complex structure $J'_{D_m}$ so that for $s_0\leq -1$, $J'_{D_m}= \psi^{1/m}_*J_{D_m}$ and (J1)--(J3) still hold; this is possible since $\psi^\rho_* J_X\in \mathcal{J}_{X,\alpha}$ if $J_X\in \mathcal{J}_{X,\alpha}$. 

\vskip.15in\noindent
{\em Step 2.} We will briefly indicate how to inductively apply the modifications from Step 1 to construct consistent collections of totally real boundary conditions $\tilde L''_j(D_m)$ and almost complex structures $D_m\mapsto J''_{D_m}$, $D_m\in \mathcal{A}_m$ so that (J1)--(J3) still hold; see Figure~\ref{fig-Ainfty}.

\begin{figure}[ht]
	\begin{overpic}[scale=2.5]{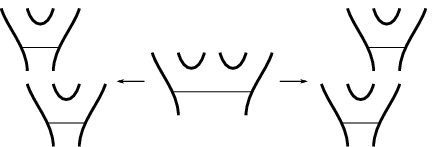}
		\put(33.8,18){\tiny $1$} \put(43.5,20.5){\tiny $\phi^1$} \put(53.3,20.5){\tiny $\phi^2$} \put(63.75,18){\tiny $\phi^3$}
		\put(47.5,15.7){\tiny $J_{D_3}$} \put(59.8,9){\tiny $\psi^{1/3}\phi^1$} \put(44.3,9){\tiny $\psi^{1/3} J_{D_3}$} \put(34,9){\tiny $\psi^{1/3}$}
		\put(48.5,25){\tiny \color{red} $\psi^{3}$}
		\put(4.7,11){\tiny $1$} \put(14.7, 13.5) {\tiny $\phi^1$} \put (24.7,11){\tiny $\phi^2$}

		\put(73.5,11){\tiny $1$} \put(83.5,13.5){\tiny $\phi^1$} \put(93.7,11){\tiny $\phi^2$}
		\put(13,8){\tiny $J_{D_2}$} \put(82,8){\tiny $J_{D_2}$} 
		\put(5.7,1){\tiny $\psi^{1/2}$} \put(14.9,1.5){\tiny $*$} \put(20.7,1){\tiny $\psi^{1/2}\phi^1$} \put(74.5,1){\tiny $\psi^{1/2}$} \put(83.7,1.5){\tiny $*$}\put(89,1){\tiny $\psi^{1/2}\phi^1$} 
		\put(9.1,19.5){\tiny $*$} \put(90.3,19.5){\tiny $*$}
		\put(1,5){\tiny \color{red} $\psi^{2}$} \put(95,5){\tiny \color{red} $\psi^{2}$}
		\put(-2.3,29){\tiny $1$} \put(8.3, 31.5){\tiny $\phi^1$} \put(18.3,29){\tiny $\phi^2$}
		\put(79.5,29){\tiny $1$} \put(89.5,31.5){\tiny $\phi^1$} \put(99.5,29){\tiny $\phi^2$}
		\put(7,25.8){\tiny $J_{D_2}$}\put(87.8,25.8){\tiny $J_{D_2}$}
		\put(43,0){\tiny $*=\psi^{1/2} J_{D_2}$}
		\put(-.5,18.3){\tiny $\psi^{1/2}$} \put(14.5, 18.3){\tiny $\psi^{1/2}\phi^1$} \put(80,18.3){\tiny $\psi^{1/2}$} \put(95,18.3){\tiny $\psi^{1/2}\phi^1$}
		\put(-5,25){\tiny \color{red} $\psi^4$} \put(100,25){\tiny \color{red} $\psi^{2}\phi^1 \psi^{2}$ }
	\end{overpic}
    \caption{Description of the totally real boundary conditions and almost complex structures for $a=b=2$. The middle is $D_3$ and the left and right are degenerations into $D_2$ and $D_2$. For a diffeomorphism $h$ of $T^*M$, the label $h$ is shorthand for $h(\sqcup_{i}T_{q_i}^*{M})$; we also abbreviate $\phi=\phi_{H_V}$.  A diffeomorphism in red means apply the diffeomorphism to all the labels on the closest disk.}
    \label{fig-Ainfty}
\end{figure}

For $D_m\in \mathcal{A}_m  \setminus N(\partial \mathcal{A}_m)$ we set $\tilde L''_j(D_m):= \psi^{m}(\tilde L'_j(D_m))$ and $J''_{D_m}:= (\psi^{m})_* J'_{D_m}$; this extra normalization is done for consistency with the evaluation map $\mathcal{F}$ in Section~\ref{section-f}.   

Suppose $D_m\in \mathcal{A}_m$ is close to breaking into $D_a$ and $D_b$, $a+b=m+1$, where the negative end of $D_a$ is glued to the positive end of $D_b$ corresponding to $p_{b-i}$, and $J_{D_m}$ is close to breaking into $J_{D_a}$ and $J_{D_b}$.  We then apply Step 1 to obtain $J'_{D_a}$ and $J'_{D_b}$, apply $\psi^{b}\phi^i_{H_V}\psi^{a}$ to $J'_{D_a}$ (resp.\ to $\tilde L'_j(D_a)$ for all $j$) and $\psi^{b}$ to $J'_{D_b}$ (resp.\ to $\tilde L'_j(D_b)$ for all $j$) so the ends match, and glue them together. 

{We then choose an interpolation between the above choices of boundary conditions and almost complex structures as in Figure~\ref{fig-Ainfty}.}

\vskip.15in
By abuse of notation, we write $J_{D_m}$ instead of $J''_{D_m}$ from now on.  Condition~\eqref{floer-condition} in the definition of the moduli space $\mathcal{M}({\bf y}_1,\dots,{\bf y}_m,{\bf y}_0)$ is modified as follows: 
\begin{itemize}
\item $\tilde L_{ij}$ is replaced by $\tilde L''_{ij}(D_m)$; and
\item the tuples ${\bf y}_i$ are replaced by the corresponding tuples of intersection points after applying the appropriate diffeomorphism to the two intersecting fiber Lagrangians; for example in the case where $D_m$ is close to breaking into $D_a$ and $D_b$, we apply $\psi^{b}$ to the intersection points corresponding to the positive ends of $D_b$.
\end{itemize}

\begin{lemma}
    \label{lemma-compactness}
    Fix a sufficiently generic consistent choice of almost complex structures as above.
    Let $d=0$ or $1$. Given $\mathbf{y}_1,\dots,\mathbf{y}_m\in CW(\sqcup_{i}T_{q_i}^*{M})$, $\mathcal{M}^{\mathrm{ind}=d,\chi}(\mathbf{y}_1,\dots,\mathbf{y}_m,\mathbf{y}_0)$ is empty for all but finitely many $\mathbf{y}_0$. If it is nonempty, $\mathcal{M}^{\mathrm{ind}=d,\chi}(\mathbf{y}_1,\dots,\mathbf{y}_m,\mathbf{y}_0)$ (and $\mathcal{M}^{\operatorname{ind}=d,\chi}(\mathbf{y}_1,\mathbf{y}_0)/\mathbb{R}$ if $m=1$) admits a compactification for each Euler characteristic $\chi$.
\end{lemma}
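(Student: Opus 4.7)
The plan is to follow the standard wrapped Floer compactness template (as in \cite{abouzaid2010geometric}), adapted to the HDHF setup with its $A_\infty$ base direction $D_m$ and the rescaling conventions of Steps 1--2 of Section \ref{subsection-wrap}. The argument splits into (a) an a priori energy bound, (b) a $C^0$ bound confining the curves to a compact region of $D_m\times T^*M$, and (c) SFT/Gromov compactness on that compact region, with the finiteness of $\mathbf{y}_0$ falling out of (a).

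For (a), exactness of $T^*M$ and of every Lagrangian boundary condition, together with the explicit primitives on the totally real interpolations $\tilde{L}''_j(D_m)$ produced by the Hamiltonian and Liouville isotopies of Steps 1--2, bounds the symplectic energy $E(u)=\int_{\dot F} u^*\Omega$ by a sum of action differences of the asymptotic Hamiltonian chords $\mathbf{y}_1,\dots,\mathbf{y}_m,\mathbf{y}_0$ plus topological constants independent of $u$. For (b), I would apply the integrated maximum principle to $r\circ \pi_X\circ u$, where $r=|p|$ is the radial coordinate on the symplectization end of $T^*M$; condition (J1), which says that each fiberwise $J_{D_m}$ lies in $\mathcal{J}_{X,\alpha}$ (a class preserved by the Liouville rescaling $\psi^\rho$), provides the required subharmonicity, while cylindricality of the boundary conditions at infinity rules out boundary maxima. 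The finiteness of $\mathbf{y}_0$ then follows from (a) together with the fact that, because $H_V$ is quadratic at infinity, only finitely many Hamiltonian chords between $\phi_{H_V}^1(\sqcup_i T^*_{q_i}M)$ and $\sqcup_i T^*_{q_i}M$ have action below any fixed bound. For (c), once the curves are confined to a compact region and $\chi$ is fixed, Gromov compactness yields a compactification; exactness of the Lagrangians rules out sphere and disk bubbling, the fixed $\chi$ rules out nodal degenerations that would change the Euler characteristic, and the surviving boundary strata are $A_\infty$ base breakings $D_m\rightsquigarrow D_a\sqcup D_b$ with $a+b=m+1$, Hamiltonian chord breakings at the cylindrical ends $e_i$, and, when $m=1$, the $\mathbb{R}$-action we quotient out.

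The main obstacle is the $C^0$ bound in (b). The totally real boundary conditions $\tilde{L}''_j(D_m)$ and the modified symplectic form $\Omega=ds_0\wedge(dt_0-dG)+\omega$ introduced near the output end differ from a genuine product-cylindrical setup inside a bounded region of $T^*M$, so the standard argument does not apply off the shelf. I would handle this by arranging that the Hamiltonian $G(s_0,t_0,x)$ of Step 1 and the interpolation of Step 2 are compactly supported in the $T^*M$-direction; then outside a fixed compact subset of $T^*M$ the boundary conditions become honest $\psi^{1/m}$-rescalings of cotangent fibers, $\Omega$ reduces to the product form $ds_0\wedge dt_0+\omega$, and $J''_{D_m}$ becomes $\psi^{1/m}_* J_{D_m}$ with fiberwise factor in $\mathcal{J}_{X,\alpha}$. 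In this cylindrical region the integrated maximum principle of \cite{abouzaid2010geometric} applies verbatim, and since all asymptotic chords of bounded action lie in a compact subset of $T^*M$, combining this with compactness of $\overline{\mathcal{A}}_m$ yields the desired $C^0$ bound and completes the compactness argument.
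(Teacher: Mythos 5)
Your proposal is correct and matches the paper's intent: the paper itself only cites the compactness arguments of \cite[Appendix B]{abouzaid2010geometric} and \cite[Section 7]{abouzaid2010open}, and your three-step scheme --- a priori energy bound from exactness and the Lagrangian primitives, $C^0$ confinement via the integrated maximum principle (with the correct observation that one should arrange $G$ and the Step~2 interpolation to be compactly supported in the $T^*M$-direction so that the genuine cylindrical picture is restored outside a compact set), and Gromov compactness with finiteness of $\mathbf{y}_0$ from the action bound together with quadratic-at-infinity growth of $H_V$ --- is precisely that argument transported to the branched-cover HDHF setup.

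One phrase in (c) should be sharpened. Fixing $\chi$ does not by itself exclude nodal limits from the compactification: a sequence of curves with fixed domain Euler characteristic can converge to a curve with a branch point of $\pi_{D_m}\circ u$ hitting $\partial D_m$, and such boundary-nodal degenerations genuinely do occur in Section~\ref{section-f} once one of the boundary conditions is the zero section. What rules them out in the present lemma is that each $\partial_j D_m$ carries a $\kappa$-tuple of pairwise \emph{disjoint} cotangent fibers $\tilde L_{j1},\dots,\tilde L_{j\kappa}$: a boundary branch point would force two distinct sheets, constrained to lie on distinct $\tilde L_{ji}$, to collide in a single fiber of $T^*M$, which disjointness forbids. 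With this substitution of justification, and with sphere and disk bubbling excluded by exactness, the surviving boundary strata are exactly the $A_\infty$ base breakings, Hamiltonian chord breakings at the strip-like ends, and (for $m=1$) the translation quotient, and the argument closes.
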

\begin{proof} 
This is similar to \cite[Appendix B]{abouzaid2010geometric} or \cite[Section 7]{abouzaid2010open}.
\end{proof}

Therefore, the $A_\infty$-operation
\begin{equation*}
    \mu^m\colon CW(\sqcup_{i}T_{q_i}^*{M})\otimes\dots\otimes CW(\sqcup_{i}T_{q_i}^*{M})\to CW(\sqcup_{i}T_{q_i}^*{M}),
\end{equation*}
can be defined using a sufficiently generic consistent collection, making the chain complex $CW(\sqcup_{i}T_{q_i}^*{M})$ into an $A_\infty$-algebra.

\begin{remark}
    The same prescription gives us the wrapped HDHF category $\mathcal{W}_\kappa(X)$ of a Liouville domain, which generalizes the wrapped Fukaya category $\mathcal{W}_1(X)$ of $X$.
\end{remark}

We now discuss the grading on $CW(\sqcup_{i}T_{q_i}^*{M})$. 
Since $c_1(T(T^*{M}))=0$ and the Maslov class of the Lagrangian vanishes, there is a well-defined $\mathbb{Z}$-grading. Following \cite[Section 1.3]{auroux2014beginner}, we choose a nonzero section $\mu$ of the trivial complex line bundle $\Lambda_\mathbb{C}^2 T^*(T^*{M})$: Let $\{U_\alpha\}$ be a cover of ${M}$. On any local chart $U_\alpha$ with coordinates $(x_\alpha^1,\dots,x_\alpha^n)$, we define 
\begin{equation}
\label{eq-mu-alpha}
    \mu_\alpha=(dx_\alpha^1-i\circ dx_\alpha^1\circ J)\wedge\dots\wedge(dx_\alpha^n-i\circ dx_\alpha^n\circ J),
\end{equation}
where $J$ is viewed as a bundle map which takes $v^*\in T^*M$ to its dual $v\in TM$ via $g$.
Let $\mu=\sum_\alpha \varphi_\alpha\mu_\alpha$, where $\{\varphi_\alpha\}$ is some partition of unity with respect to $\{U_\alpha\}$. Then we consider the phase function
\begin{gather}
    \varphi_\mu\colon {LGr}(T(T^*{M}))\to S^1\label{eq-phi-mu},\\
    A \mapsto\frac{\mu(v_1\wedge\dots\wedge v_n)^2}{||\mu(v_1\wedge\dots\wedge v_n)||^2},\nonumber
\end{gather}
where ${LGr}(T(T^*{M}))$ is the Grassmannian of Lagrangian planes in $T(T^*{M})$ and $\{v_1,\dots,v_n\}$ are tangent vectors that span $A$. Note that $\varphi_\mu$ is independent of the choice of $\{v_1,\dots,v_n\}$. For any loop $l$ in ${LGr}(T(T^*{M}))$, the Maslov index of $l$ is then defined as the degree of $\varphi_\mu$ on $l$. Moreover, if the Maslov class of some Lagrangian $L$ vanishes, we can lift $\varphi_\mu$ on $L$ to a grading function $\tilde{\varphi}_\mu\colon L\to \mathbb{R}$.

Given any $q\in{M}$, denote the tangent space of the zero section of $T^*M$ at $q$ by $T_q{M}$. 
One can check that $\varphi_\mu(T_q{M})=1\in S^1$ for any choice of $\mu$. 
Hence we can define a grading function on the zero section ${M}$ which is identically $0$. 
Similarly, given any $x\in T^*{M}$, denote the vertical space at $x$ by $V_x$. 
We check that $\varphi_\mu(V_x)=(-1)^{2n}=1\in S^1$ for any choice of $\mu$. 
Therefore, on any (unwrapped) cotangent fiber $T^*_q{M}$, we can lift $\varphi_\mu$ to a grading function $\tilde{\varphi}_\mu\colon T^*_q{M}\to\mathbb{R}$ which is identically $0$.
~\\

Now let $M=\Sigma$, a closed oriented surface of genus greater than 0. 
In this case $|\hbar|=2-n=0$ from (\ref{eq-hgrading}). Let $g$ be a Riemannian metric on ${\Sigma}$ which is a small perturbation of the flat metric when ${\Sigma}$ is a torus and of the hyperbolic metric when ${\Sigma}$ has genus greater than 1. In this case:

\begin{lemma}
    \label{lemma-grading}
    Fix $\mu$ by any choice satisfying (\ref{eq-mu-alpha}). Then the grading $|\mathbf{y}|=0$ for every $\mathbf{y}\in CF(\phi^1_{H_V}(\sqcup_{i}T_{q_i}^*\Sigma),\sqcup_{i}T_{q_i}^*\Sigma)$.
\end{lemma}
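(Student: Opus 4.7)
The plan is to show that for every intersection point $y \in \phi^1_{H_V}(T^*_{q_a}\Sigma) \cap T^*_{q_b}\Sigma$ contributing to a $\kappa$-tuple $\mathbf{y}=\{y_1,\dots,y_\kappa\}$, the individual grading $|y|$ equals $0$. Since $|\mathbf{y}|=|y_1|+\dots+|y_\kappa|$, this immediately gives $|\mathbf{y}|=0$. Throughout, I use $n=2$ and the fact that the grading function on any unwrapped cotangent fiber is identically $0$, as already proved above.

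The first step is to express $|y|$ in terms of the phase function. Fix a grading function $\tilde{\varphi}_{\mu}^{(0)}\equiv 0$ on $T^*_{q_b}\Sigma$. Transport the grading function on $T^*_{q_a}\Sigma$ forward by the Hamiltonian isotopy $\{\phi^t_{H_V}\}_{t\in[0,1]}$ to obtain a grading function $\tilde{\varphi}_{\mu}^{(1)}$ on $\phi^1_{H_V}(T^*_{q_a}\Sigma)$. Since the grading function is the unique continuous lift of $\varphi_{\mu}$ along the moving Lagrangian starting from $\tilde{\varphi}_{\mu}\equiv 0$, computing $\tilde{\varphi}_{\mu}^{(1)}(y)$ amounts to tracking the winding of $\varphi_{\mu}\circ \gamma$ in $S^{1}$, where
\begin{equation*}
\gamma(t)=d\phi^t_{H_V}\bigl(T_{y_0}(T^*_{q_a}\Sigma)\bigr)\in LGr\bigl(T_{\phi^t(y_0)}(T^*\Sigma)\bigr),\qquad y_0=(\phi^1_{H_V})^{-1}(y).
\end{equation*}
The grading $|y|$ is then the Maslov index of a canonical short path from $T_y\phi^1_{H_V}(T^*_{q_a}\Sigma)$ to $T_y T^*_{q_b}\Sigma$ in $LGr$, corrected by $\tilde{\varphi}_{\mu}^{(1)}(y)-\tilde{\varphi}_{\mu}^{(0)}(y)$. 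Because both endpoints of this short path are \emph{vertical} Lagrangian planes (both tangent to cotangent fibers, before and after the Hamiltonian wrapping brings them to $y$), and the phase of every vertical plane equals $\varphi_\mu(V_x)=1\in S^1$, the only contribution to $|y|$ comes from the accumulated winding of $\varphi_{\mu}\circ \gamma$ over $t\in[0,1]$.

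The second step is to show this accumulated winding is $0$ for the chosen metric. Writing $H_V=\tfrac12|p|^2+V$, the linearization of $\phi^t_{H_V}$ along the orbit through $y_0$ is governed by a perturbed Jacobi equation. On the flat torus ($g$ flat, $V=0$), the geodesic flow in Darboux coordinates is the shear $d\phi^t=\bigl(\begin{smallmatrix} I & tI \\ 0 & I\end{smallmatrix}\bigr)$, which moves the vertical plane $V_{y_0}$ through a family of Lagrangian planes whose phase $\varphi_\mu$, computed via (\ref{eq-mu-alpha}) with the flat identification $J\,dp_j=\partial_{q_j}$, varies strictly monotonically by less than one full turn. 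On a higher-genus $\Sigma$ with hyperbolic metric, the linearized geodesic flow is governed by the Jacobi equation in negative curvature, which has no conjugate points; consequently the path $\gamma(t)$ never meets the vertical distribution after $t=0$ (in the transversality sense that produces Maslov crossings), and again the continuous lift of $\varphi_\mu\circ\gamma$ remains in a single fundamental domain over $[0,1]$. Since $V$ has small $W^{1,2}$-norm, perturbing $g$ to a generic nearby metric and turning on $V$ preserves both the no-conjugate-points condition and the bound on total winding, by continuous dependence of the linearized flow.

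Combining the two steps, $\tilde{\varphi}_{\mu}^{(1)}(y)-\tilde{\varphi}_{\mu}^{(0)}(y)=0$ and the canonical short path between two vertical planes has Maslov index $0$, so $|y|=0$. The main obstacle is the bookkeeping in the second step: one has to verify that the particular convention for lifting $\varphi_{\mu}$ produces an integer lift equal to $0$ rather than some other integer consistent with phase difference $0$, and that the perturbation $V$ does not introduce a spurious crossing of $\gamma(t)$ with the vertical distribution. Both points reduce to a continuity argument anchored at the trivial case $V=0$ (flat or hyperbolic), where the computation is explicit and yields exactly $|y|=0$.
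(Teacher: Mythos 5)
Your route is genuinely different from the paper's: the paper simply cites Duistermaat's theorem and Abbondandolo--Schwarz to identify the Conley--Zehnder index of $y_i$ with the Morse index of the corresponding $V$-perturbed geodesic, and then invokes Lemma~\ref{lemma-degree} (no conjugate points for the flat/hyperbolic metric, hence Morse index $0$). You instead try to compute the grading from scratch by tracking the phase $\varphi_\mu$ along the Hamiltonian isotopy. The underlying geometric input (no conjugate points) is the same, but you are in effect reproving the Duistermaat identification rather than citing it, and this is where the bookkeeping breaks down.

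There is a concrete gap. First, the canonical short path from $T_y\phi^1_{H_V}(T^*_{q_a}\Sigma)$ to $T_y T^*_{q_b}\Sigma$ does \emph{not} have two vertical endpoints: $T_y\phi^1_{H_V}(T^*_{q_a}\Sigma)$ is the image of a vertical plane under $d\phi^1_{H_V}$ and is generically not vertical. Second, the key claim $\tilde{\varphi}_{\mu}^{(1)}(y)-\tilde{\varphi}_{\mu}^{(0)}(y)=0$ is false. On the flat torus with $V=0$, the transported path is $\gamma(t)=d\phi^t_{H_V}(V_{y_0})=\operatorname{span}\{\,t\partial_{q_j}+\partial_{p_j}\,\}_{j=1,2}$, and with $\mu=(dq_1-i\,dp_1)\wedge(dq_2-i\,dp_2)$ one gets $\varphi_\mu(\gamma(t))=\bigl((t-i)/\sqrt{t^2+1}\bigr)^4$, which goes from $1$ at $t=0$ to $-1$ at $t=1$. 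The continuous lift therefore changes by half a period, so $\tilde\varphi^{(1)}_{\mu}(y)\neq\tilde\varphi^{(0)}_{\mu}(y)$. The conclusion $|y|=0$ is still correct, but only because the angle correction carried by the canonical short path (from $T_yL_0$ to $T_yL_1$, contributing a nontrivial sum of K\"ahler angles) exactly cancels that half-period; your argument sets both quantities to zero, and the two errors happen to cancel. To make the direct-phase approach rigorous you would need to write down the precise index formula relating $|y|$, the grading-function values, and the angles $\theta_j$ between the tangent planes, and verify the cancellation --- at which point you have essentially re-derived the Duistermaat-type statement the paper quotes. Until that bookkeeping is supplied, the step ``$\tilde{\varphi}_{\mu}^{(1)}(y)-\tilde{\varphi}_{\mu}^{(0)}(y)=0$'' is a genuine error, not merely an imprecision.
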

\begin{proof} Given $\mathbf{y}=\{y_1,\dots,y_\kappa\}\in CF(\phi^1_{H_V}(\sqcup_{i}T_{q_i}^*\Sigma),\sqcup_{i}T_{q_i}^*\Sigma)$, each $y_i\in \phi^1_{H_V}(L_i)\cap L_{i'}$ corresponds to a time-1 Hamiltonian chord from $L_i$ to $L_{i'}$, parametrized by $(q(t),p(t))$, $t\in[0,1]$. Its Legendre transform (see Definition \ref{def-leg-L}) gives a perturbed geodesic $\gamma$ on $\Sigma$. By classical results of Duistermaat \cite[Theorem 4.3]{duistermaat1976morse} and \cite[Section 1.2]{abbondandolo2006floer}, the Conley-Zehnder index of $y_i$ with respect to $\mu$ is equal to the Morse index of $\gamma$ with respect to its Lagrangian action. Lemma \ref{lemma-degree} then implies that $|y_i|=0$. Hence $|\mathbf{y}|=\sum\limits_{i=1}^{\kappa}|y_i|=0$.
\end{proof}

\begin{proposition}
\label{lemma-algebra}
The $A_\infty$-algebra $CW(\sqcup_{i}T_{q_i}^*\Sigma)$ is supported in degree zero, and hence is an ordinary algebra.
\end{proposition}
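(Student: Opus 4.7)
The plan is to combine Lemma~\ref{lemma-grading} with the degree formula for $\hbar$ to collapse the entire $A_\infty$-structure to a single associative product. First, by Lemma~\ref{lemma-grading}, every generator $\mathbf{y}$ of $CW(\sqcup_{i}T_{q_i}^*\Sigma) = CF(\phi^1_{H_V}(\sqcup_{i}T_{q_i}^*\Sigma),\sqcup_{i}T_{q_i}^*\Sigma)$ satisfies $|\mathbf{y}|=0$. Since $\dim_{\mathbb{R}}\Sigma = n = 2$, equation~(\ref{eq-hgrading}) gives $|\hbar|=0$, so the coefficient ring $\mathbb{Z}[[\hbar]]$ is itself concentrated in degree zero. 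Consequently the whole chain complex $CW(\sqcup_{i}T_{q_i}^*\Sigma)$ lives in degree $0$.

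Next I would invoke the standard fact that, in an $A_\infty$-algebra, the operation $\mu^m$ has degree $2-m$. In our setting this is transparent from~(\ref{eq-grading}): setting $n=2$ and $\mathrm{ind}(u)=0$ yields $|\mathbf{y}_0| = |\mathbf{y}_1|+\dots+|\mathbf{y}_m| + (2-m)$, so $\mu^m$ shifts total degree by $2-m$. Since $CW$ is concentrated in degree $0$, any $\mu^m$ with $m\neq 2$ lands in the degree $2-m\neq 0$ part of $CW$, which is zero. Hence $\mu^m \equiv 0$ for all $m\neq 2$. The only surviving operation is $\mu^2$, which has degree $0$, and the $A_\infty$-relations reduce to the single relation expressing associativity of $\mu^2$. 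Therefore $(CW(\sqcup_{i}T_{q_i}^*\Sigma),\mu^2)$ is an ordinary associative $\mathbb{Z}[[\hbar]]$-algebra, as claimed. There is essentially no obstacle here: the content of the proposition is already packaged into Lemma~\ref{lemma-grading} and the degree $|\hbar|=0$ which is special to the surface case, and the rest is a formal consequence of the $A_\infty$-grading conventions.
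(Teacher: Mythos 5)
Your argument is correct and follows the paper's own proof essentially verbatim: both reduce to Lemma~\ref{lemma-grading} and the observation $|\hbar|=2-n=0$ to place the complex in degree $0$, then use that $\mu^m$ has degree $2-m$ to kill all operations except $\mu^2$. The only (harmless) addition is your explicit derivation of the degree of $\mu^m$ from~(\ref{eq-grading}), which the paper takes as standard.
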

\begin{proof} 
The complex $CW(\sqcup_{i}T_{q_i}^*\Sigma)$ is supported in degree zero by Lemma \ref{lemma-grading} and $|\hbar|=0$. 
The $A_\infty$-operation $\mu^m=0$ for all $m\neq2$ since the degree of $\mu^m$ is $2-m$. Hence $\mu^2$ is the only nontrivial $A_\infty$-operation and $CW(\sqcup_{i}T_{q_i}^*\Sigma)$ is an ordinary algebra.
\end{proof}

\section{The relationship between the based loop space and the wrapped Floer homology of a cotangent fiber} 
\label{section-loop}

Let $M$ be a compact oriented manifold of dimension $n$. We review some basic properties of the based loop space on $M$ and in particular the relationship to the wrapped Floer homology of a single cotangent fiber of $T^*M$, i.e., the case when $\kappa=1$.
We refer the reader to \cite{abbondandolo2006floer,abouzaid2012wrapped} for more details.  

Let $g$ be a generic Riemannian metric on $M$ and let $\nabla$ be its associated Levi-Civita connection.

Consider the path space
\begin{equation*}
    {\Omega}(M,q_0,q_1)=\{\gamma\in C^0([0,1],M)~|~\gamma(0)=q_0,\gamma(1)=q_1\}.
\end{equation*}
There is a composition map which is simply the concatenation of paths:
\begin{gather*}
    {\Omega}(M,q_0,q_1)\times{\Omega}(M,q_1,q_2)\to{\Omega}(M,q_0,q_2),\\
    \gamma_1\gamma_2(t)=
    \left\{
        \begin{array}{lr}
            \gamma_1(2t),\,\,\,\,\,0\leq t\leq 1/2, &  \\
            \gamma_2(2t-1),\,\,\,\,1/2\leq t\leq 1. &  
        \end{array}
    \right.
\end{gather*}



In order to do Morse theory on the path space, we use $\Omega^{1,2}(M,q_0,q_1)$, the subset of $\Omega(M,q_0,q_1)$ consisting of paths in the class $W^{1,2}$.

There is a natural action functional on $\Omega^{1,2}(M,q_0,q_1)$.
Recall the Hamiltonian $H_V$, the Hamiltonian vector field $X_{H_V}$, and the time-$t$ Hamiltonian flow $\phi^t_{H_V}$ from Section \ref{subsection-wrap}.
Consider the function $L_V\colon [0,1]\times TM \to \mathbb{R}$ given by:
\begin{equation}
\label{eq-lagrangian}
    L_V(t,q,v)=\frac{1}{2}|v|^2-V(t,q),
\end{equation}
where $t\in[0,1]$, $q\in M$, and $v\in T_q M$. For each $\gamma\in\Omega^{1,2}(M,q_0,q_1)$, let
\begin{equation}
\label{eq-lagrangian-action}
    \mathcal{A}_V(\gamma)=\int^1_0 L_V(t,\gamma,\dot{\gamma})\,dt.
\end{equation}
It is well-known that $\mathcal{A}_V$ is a Morse function on $\Omega^{1,2}(M,q_0,q_1)$. Therefore we can define $CM_*(\Omega^{1,2}(M,q_0,q_1))$ as the Morse complex generated by the critical points of $\mathcal{A}_V$ and with differential induced by $\mathcal{A}_V$ and the metric $g$. We omit the details which can be found in \cite[Section 2]{abbondandolo2006floer} and simply denote the Morse homology group of $CM_*(\Omega^{1,2}(M,q_0,q_1))$ by $HM_*(\Omega^{1,2}(M,q_0,q_1))$.

\begin{definition}
    \label{def-v-geodesic}
    A {\em $V$-perturbed geodesic $\gamma$} is a map $[0,1]\to M$ such that
    \begin{equation}
        \label{eq-v-geodesic}
        \nabla_{\dot{\gamma}}\dot{\gamma}=-\nabla V,
    \end{equation}
    where $\nabla V$ denotes the gradient of $V$ with respect to $g$. 
\end{definition}

By a standard calculus of variations computation, the critical points of $\mathcal{A}_V$ on $\Omega^{1,2}(M,q_0,q_1)$ are exactly $V$-perturbed geodesics.
~\\

We now recall some standard facts following \cite[Section 2.1]{abbondandolo2006floer}.
Let $\mathcal{C}_{H_V}$ be the set of time-1 integral curves of $X_{H_V}$ on $T^*M$ and let $\mathcal{C}_{L_V}$ be the set of time-1 $V$-perturbed geodesics on $M$, i.e.,
\begin{align*}
    \mathcal{C}_{H_V}&\coloneqq\{\zeta:[0,1]\to T^*M~|~\zeta(t)=\phi^t_{H_V}\circ\zeta(0)\},\\
    \mathcal{C}_{L_V}&\coloneqq\{\gamma:[0,1]\to M~|~\nabla_{\dot{\gamma}}\dot{\gamma}=-\nabla V\}.
\end{align*}

We define a map $\mathcal{L}\colon \mathcal{C}_{H_V} \to \mathcal{C}_{L_V}$ as follows:  Given $\zeta\in \mathcal{C}_{H_V}$, let $\mathcal{L}(\zeta)$ be the path $[0,1]\to M$ given by
\begin{equation*}
    \mathcal{L}(\zeta)(t)\coloneqq \pi_M\circ\zeta(t).
\end{equation*}
One can verify that $\mathcal{L}(\zeta)$ satisfies (\ref{eq-v-geodesic}) and hence belongs to $\mathcal{C}_{L_V}$.

We define the inverse map $\mathcal{L}^{-1}\colon \mathcal{C}_{L_V}  \to \mathcal{C}_{H_V}$ as follows: 
Given $\gamma\in\mathcal{C}_{L_V}$, we define $\mathcal{L}^{-1}(\gamma)\colon[0,1]\to T^*M$ as
\begin{equation}
    \label{eq-leg-L}
    \mathcal{L}^{-1}(\gamma)\coloneqq (\gamma(t),dL(t,\gamma(t),\dot{\gamma}(t))|_{T^v_{(\gamma(t),\dot{\gamma}(t))}TM}),
\end{equation}
where $T^v_{(\gamma(t),\dot{\gamma}(t))}TM$ is the vertical fiber $\ker D\pi_M\cong T_{\gamma(t)}M$ at $(\gamma(t),\dot{\gamma}(t))\in TM$ and $\pi_M\colon TM\to M$ is the projection. 

\begin{definition}
    \label{def-leg-L}
    We call $\mathcal{L}$ the {\em Legendre transform} and call $\mathcal{L}^{-1}$ the {\em inverse Legendre transform}. 
\end{definition}

\begin{remark}
    \label{rmk-leg-L}
    Each generator $y\in CF(\phi^1_{H_V}(T_{q_0}^*{M}),T_{q_1}^*{M})$ corresponds to a time-1 integral curve of $X_{H_V}$ from $T_{q_0}^*{M}$ to $T_{q_1}^*{M}$:
    \begin{equation}
        l_y\colon[0,1]\to T^*M,\quad l_y(t)=\phi^{t-1}_{H_V}(y).
    \end{equation}
    We define $\mathcal{L}(y)$ to be $\mathcal{L}(l_y)$. Conversely, $\mathcal{L}^{-1}$ maps time-1 $V$-perturbed geodesics from $q_0$ to $q_1$ to a generator of $CF(\phi^1_{H_V}(T_{q_0}^*{M}),T_{q_1}^*{M})$.
\end{remark}

\vskip.1in
Since the inclusion
\begin{equation}
    \Omega^{1,2}(M,q_0,q_1)\hookrightarrow\Omega(M,q_0,q_1)
\end{equation}
is a homotopy equivalence, we deduce that $HM_*(\Omega^{1,2}(M,q_0,q_1))$ is isomorphic to the singular homology group $H_*(\Omega(M,q_0,q_1))$.

When $q_0=q_1=q$, we get the based loop space
\begin{equation*}
    \Omega(M,q)=\{\gamma\in C^0([0,1],M)~|~\gamma(0)=\gamma(1)=q\}.
\end{equation*}

\begin{theorem}[Theorem B of \cite{abbondandolo2010floer}]
\label{thm-ab}
    There is an isomorphism of graded algebras:
    \begin{equation}
    \label{eq-ab}
      H_{-*}(\Omega(M,q)) \to HW^*(T_q^*M).
    \end{equation}
\end{theorem}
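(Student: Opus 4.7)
The plan is to follow the Abbondandolo--Schwarz strategy, which reduces the statement to a comparison between two Morse-type theories on different spaces, linked by the Legendre transform, and then upgrades this to an algebra isomorphism by identifying the Pontryagin product with $\mu^2$.

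First, I would set up the chain-level identification of generators. On the Floer side, $CW^*(T_q^*M) = CF^*(\phi^1_{H_V}(T_q^*M), T_q^*M)$ is generated by time-$1$ Hamiltonian chords of $X_{H_V}$ from $T_q^*M$ to itself. By Remark~\ref{rmk-leg-L}, the inverse Legendre transform $\mathcal{L}^{-1}$ puts these in bijection with elements of $\mathcal{C}_{L_V}$ based at $q$, which are exactly the critical points of $\mathcal{A}_V$ on $\Omega^{1,2}(M,q,q)$. The degree match is the Duistermaat-type equality between the Conley--Zehnder index of a chord and the Morse index of the corresponding $V$-perturbed geodesic (cited in the proof of Lemma~\ref{lemma-grading}), which explains the sign in $H_{-*}$. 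Combined with the homotopy equivalence $\Omega^{1,2}(M,q) \hookrightarrow \Omega(M,q)$, this gives a canonical graded bijection between generators of $CM_{-*}(\Omega^{1,2}(M,q,q))$ and $CW^*(T_q^*M)$.

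Next, I would promote this bijection to a chain-level isomorphism $\Theta$ by counting hybrid moduli spaces of ``half-strips'' $u \colon [0,\infty) \times [0,1] \to T^*M$ satisfying Floer's equation for $H_V$ with boundary conditions $u(s,0), u(s,1) \in T_q^*M$, converging to a prescribed chord $y$ as $s \to \infty$, and such that the boundary loop $\pi_M \circ u|_{s=0} \in \Omega(M,q)$ lies on the unstable manifold (for $-\nabla \mathcal{A}_V$) of a given critical point $\gamma \in \mathcal{C}_{L_V}$. Standard Abbondandolo--Schwarz arguments show $\Theta$ is a chain map; it is filtered by action and is the identity on the associated graded, so $\Theta$ is a quasi-isomorphism.

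Finally, to identify the product structures, I would introduce an analogous hybrid moduli space with pair-of-pants domain: a Floer pair-of-pants in $T^*M$ with two positive chord ends and one negative chord end, glued at the negative end to a Y-shaped Morse tree in $\Omega^{1,2}(M,q,q)$ whose vertex realizes the concatenation map $(\gamma_1, \gamma_2) \mapsto \gamma_1 \gamma_2$. Counting rigid elements defines a chain homotopy between $\Theta \circ \mu^2$ and the Pontryagin product $\ast$ composed with $\Theta \otimes \Theta$. The Pontryagin product on $H_*(\Omega(M,q))$ is the standard concatenation product, so this yields the desired algebra isomorphism on homology.

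The main obstacle is analytic: one needs $C^0$-estimates and compactness for the half-strip and half-pair-of-pants moduli spaces in the non-compact target $T^*M$, with quadratic-at-infinity Hamiltonian $H_V$ and non-compact Lagrangian boundary $T_q^*M$. These estimates rely crucially on the convexity of sublevels of $H_V$ and on bounded-action control, and they are precisely what makes the Legendre transform the correct bookkeeping device for passing between the fiberwise-quadratic Floer problem and the classical variational problem on the path space. The same analytic package also underlies the compactification of the product-comparison moduli, which is the technical heart of the algebra statement.
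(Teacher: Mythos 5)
The paper does not prove this statement; it is imported wholesale as Theorem B of Abbondandolo--Schwarz \cite{abbondandolo2010floer} and used as a black box (with the chain-level refinement and the reverse-direction map attributed to Abouzaid in the surrounding discussion). There is therefore no in-paper proof to compare your attempt against.

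That said, your sketch is a correct reconstruction of the Abbondandolo--Schwarz argument, and it would compile into a proof if one supplied the analytic details: the Legendre transform gives the generator bijection between Hamiltonian chords and $V$-perturbed geodesics; Duistermaat's index theorem matches Conley--Zehnder and Morse indices; the chain map $\Theta$ is defined by counting half-strips whose $s=0$ boundary loop lies in the unstable manifold of a critical point of $\mathcal{A}_V$; and the action filtration argument (each geodesic $\gamma$ contributes one canonical rigid half-strip to the Floer generator $\mathcal{L}^{-1}(\gamma)$ of equal action, with no solutions to higher-action generators) shows $\Theta$ is a quasi-isomorphism. The product comparison via a Floer pair-of-pants glued to a Morse tree realizing concatenation is likewise the route taken in the cited source. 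Two small points of hygiene: $\Theta$ is triangular with diagonal entries $\pm1$ (orientation-dependent), not literally the identity on the associated graded, though this does not affect the conclusion; and the $C^0$ and energy estimates you flag as ``the main obstacle'' are indeed where the work lies, relying on the fiberwise convexity of $H_V$ and linear-at-infinity Lagrangian boundary conditions---the paper correspondingly offloads all of this to \cite{abbondandolo2006floer,abbondandolo2010floer,abouzaid2012wrapped}.
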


In the opposite direction, Abouzaid constructed a chain level evaluation map $CW^*(T_q^*M) \to C_{-*}(\Omega(M,q))$. 
It induces an isomorphism on the level of homology:
\begin{equation}
\label{eq-F-Ab}
    \tilde{\mathcal{F}}\colon HW^*(T_q^*M) \to H_{-*}(\Omega(M,q)).
\end{equation}

\begin{remark}
    \label{rmk-ab}
    Theorem \ref{thm-ab} also holds for path spaces by \cite{abbondandolo2006floer}, i.e., there is an isomorphism
    \begin{equation}
    \label{eq-ab-ends}
        H_{-*}(\Omega(M,q_0,q_1))\to HW^*(T_{q_0}^*M,T_{q_1}^*M)
    \end{equation}
    where the right-hand side is the wrapped Floer homology group whose generators are time-1 Hamiltonian flows of $\phi^t_{H_V}$ from $T_{q_0}^*M$ to $T_{q_1}^*M$.
\end{remark}

Let us now specialize to $M=\Sigma$, a closed oriented surface of genus greater than 0.
In this case we further see:
\begin{lemma}
\label{lemma-degree}
    $H_*(\Omega(\Sigma,q_0,q_1))$ is supported in degree 0.
\end{lemma}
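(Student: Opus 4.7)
The plan is to exploit the fact that any closed oriented surface $\Sigma$ of genus $g>0$ is aspherical, i.e., a $K(\pi_1(\Sigma),1)$. For $g=1$ the universal cover is $\mathbb{R}^2$, and for $g>1$ the universal cover is the hyperbolic plane $\mathbb{H}^2$; both are contractible, so $\pi_n(\Sigma)=0$ for all $n\geq 2$.

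First I would reduce from the path space to the based loop space. Fix once and for all a continuous path $\alpha\colon[0,1]\to\Sigma$ from $q_0$ to $q_1$. Concatenation with $\alpha$ (and its reverse) gives a homotopy equivalence
\begin{equation*}
\Omega(\Sigma,q_0,q_1)\;\simeq\;\Omega(\Sigma,q_0),
\end{equation*}
so it suffices to treat the based loop space $\Omega(\Sigma,q_0)$.

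Next I would apply the long exact sequence of homotopy groups for the path-loop fibration $\Omega(\Sigma,q_0)\to P(\Sigma,q_0)\to\Sigma$, whose total space $P(\Sigma,q_0)$ is contractible. This gives the standard isomorphisms $\pi_n(\Omega(\Sigma,q_0))\cong\pi_{n+1}(\Sigma)$ for all $n\geq 0$. Since $\Sigma$ is aspherical of positive genus, $\pi_{n+1}(\Sigma)=0$ for $n\geq 1$, while $\pi_0(\Omega(\Sigma,q_0))\cong\pi_1(\Sigma)$. Hence each path component of $\Omega(\Sigma,q_0)$ is weakly contractible, and $\Omega(\Sigma,q_0)$ is weakly equivalent to the discrete set $\pi_1(\Sigma,q_0)$.

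Therefore $H_*(\Omega(\Sigma,q_0,q_1))$ is supported in degree $0$, as claimed; in fact $H_0$ is the free abelian group on $\pi_1(\Sigma,q_0)$ after the identification with $\Omega(\Sigma,q_0)$ via $\alpha$. There is no serious obstacle in this argument — the only thing to be careful about is the genus hypothesis, which is exactly what is used to rule out the sphere (whose loop space has nontrivial higher homology, as noted in the earlier remark of the paper).
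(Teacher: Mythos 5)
Your argument is correct, but it takes a genuinely different (purely homotopy-theoretic) route from the paper's. You observe that a closed oriented surface of positive genus is aspherical, reduce the path space to the based loop space by concatenation with a fixed path, and then use the path--loop fibration $\Omega(\Sigma,q_0)\to P(\Sigma,q_0)\to\Sigma$ to conclude that each component of $\Omega(\Sigma,q_0)$ is weakly contractible, hence that singular homology is concentrated in degree $0$ with $H_0$ free on $\pi_1(\Sigma,q_0)$. The paper instead argues Morse-theoretically with the chosen (flat or hyperbolic) metric: by non-positive curvature there is a unique $V$-perturbed geodesic in each homotopy class of paths from $q_0$ to $q_1$ (citing Milnor), and each such geodesic has Morse index $0$; since the Morse complex of $\mathcal{A}_V$ computes $H_*(\Omega(\Sigma,q_0,q_1))$, the homology is supported in degree $0$. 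Your approach is more elementary and metric-independent, which is a genuine advantage if the goal is only the stated homological conclusion. However, the paper's Morse-theoretic proof is what is actually invoked downstream: in the proof of Lemma~\ref{lemma-grading}, what is needed is not just the vanishing of $H_{*>0}$ but the stronger pointwise statement that \emph{every} critical point of $\mathcal{A}_V$ (i.e.\ every $V$-perturbed geodesic) has Morse index $0$, which feeds into the Conley--Zehnder index computation via Duistermaat's theorem. Your fibration argument does not by itself rule out higher-index critical points that could cancel in homology, so if you adopt it you would still need the non-positive-curvature/uniqueness argument to deduce the Morse-index claim used in Lemma~\ref{lemma-grading}.
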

\begin{proof} Recall that $V$ is small in the $W^{1,2}$-norm. If $\Sigma$ is a torus, then we can assume that $g$ is the flat metric, where all $V$-perturbed geodesics with $V$ sufficiently small are minimal and isolated. If the genus of $\Sigma$ is greater than 1, then we can assume that $g$ is the hyperbolic metric with constant curvature $-1$. It is well known that on a hyperbolic surface, there is a unique $V$-perturbed geodesic in each homotopy class of paths with fixed endpoints for $V$ sufficiently small. For details the reader is referred to Milnor \cite[Lemma 19.1]{milnor2016morse}.
Hence the Morse indices of all critical points of $\Omega^{1,2}(\Sigma,q_0,q_1)$ are 0.
\end{proof}

Note that when $M=\Sigma$, all terms of (\ref{eq-ab}) vanish except for $*=0$ by Lemma \ref{lemma-degree}. We write $HW(T_q^*\Sigma)$ for $HW^0(T_q^*\Sigma)$. 
Moreover, $H_{0}(\Omega(\Sigma,q))$ is isomorphic to the group algebra $\mathbb{Z}[\pi_1(\Sigma,q)]$ of the fundamental group $\pi_1(\Sigma,q)$.


\section{Hecke algebras} \label{section-Hecke}

Theorem (\ref{thm-ab}) and the isomorphism (\ref{eq-F-Ab}) relate $H_0(\Omega(\Sigma,q))$ to the wrapped
Floer homology $HW(T_q^*\Sigma)$ of a single cotangent fiber, and represent the special case
of $\kappa=1$ in HDHF. 
In this section, we discuss the generalization of $H_0(\Omega(\Sigma,q))$ to $\kappa \geq 1$, which we show to be equivalent to $HW(\sqcup_i T^*_{q_i}\Sigma)$ in later sections.
\vskip.1in
\noindent{\em Summary.} Consider the based loop space of the unordered configuration space of $\kappa$ points on $\Sigma$. 
Its $0$th homology is isomorphic to the group algebra of the braid group of $\Sigma$.   
The {\em braid skein algebra} $\mathrm{BSk}_\kappa(\Sigma,{\bf q})$, due to Morton and Samuelson \cite[Definition 3.1]{morton2021dahas}, is a quotient of the group algebra of the braid group by the HOMFLY skein relation and the marked point relation. Here {\bf q} is a $\kappa$-tuple of distinct points on $\Sigma$.
By reformulating the marked point relation, we obtain the {\em surface Hecke algebra} $\mathrm{H}_\kappa(\Sigma,{\bf q})$ in Definition \ref{def-hecke}. 
This surface Hecke algebra serves as an intermediary between the wrapped Floer homology and the braid skein algebra.
On one hand, we show that $\mathrm{H}_\kappa(\Sigma,{\bf q})$ and $\mathrm{BSk}_\kappa(\Sigma)$ are isomorphic up to a change of variables in Proposition \ref{prop-BSkHecke}.
On the other hand, we will construct an evaluation map from the wrapped Floer homology to $\mathrm{H}_\kappa(\Sigma,{\bf q})$ in Section \ref{subsection-ev}.

Note that the braid skein algebra for $\Sigma=T^2$ is isomorphic to the double affine Hecke algebra (DAHA) of $\mathfrak{gl}_{\kappa}$ \cite[Theorem 3.7]{morton2021dahas}. Hence $\mathrm{H}_\kappa(T^2,{\bf q})$ is also isomorphic to the DAHA.  


\vspace{.2cm}
Let $\mathrm{UConf}_{\kappa}(\Sigma)=\{\{q_1,\dots,q_{\kappa}\}~|~q_i\in \Sigma, ~q_i\neq q_j ~\mbox{for}~i\neq j\}$ be the configuration space of $\kappa$ unordered points on $\Sigma$. 
Fix a basepoint $\mathbf{q} \in \mathrm{UConf}_{\kappa}(\Sigma)$.
The based loop space $\Omega(\mathrm{UConf}_{\kappa}(\Sigma),\mathbf{q})$ consists of $\kappa$-strand braids in $\Sigma$.
Note that $H_0(\Omega(\mathrm{UConf}_{\kappa}(\Sigma),\mathbf{q}))$ is isomorphic to the group algebra $\mathbb{Z}[\mathrm{Br}_{\kappa}(\Sigma,\mathbf{q})]$, where $\mathrm{Br}_{\kappa}(\Sigma,\mathbf{q})$ denotes the braid group of $\Sigma$.

Fix a marked point $\star \in \Sigma$ which is disjoint from $\mathbf{q}$. 
Let $\mathrm{Br}_{\kappa,1}(\Sigma,\mathbf{q},\star)$ be the subgroup of $\mathrm{Br}_{\kappa+1}(\Sigma,\mathbf{q}\sqcup\{\star\})$ consisting of braids whose last strand connects $\star$ to itself by a straight line in $[0,1]\times\Sigma$.

\begin{definition}[Morton-Samuelson]
    \label{def-skein}
    The braid skein algebra $\mathrm{BSk}_\kappa(\Sigma,\mathbf{q})$ is the quotient of the group algebra $\mathbb{Z}[s^{\pm1}, c^{\pm1}][\mathrm{Br}_{\kappa,1}(\Sigma,\mathbf{q},\star)]$ by two local relations:
\begin{enumerate}
\item the HOMFLY skein relation 
    \begin{equation}
        \label{eq-skein'}
        \includegraphics[width=1cm,valign=c]{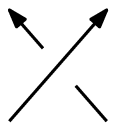}-\includegraphics[width=1cm,valign=c]{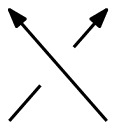}=(s-s^{-1})\includegraphics[width=1cm,valign=c]{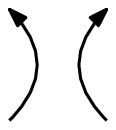},
    \end{equation}
\item the marked point relation $P=c^2$
    \begin{equation}
        \label{eq-c}
        P:=\includegraphics[height=2cm,valign=c]{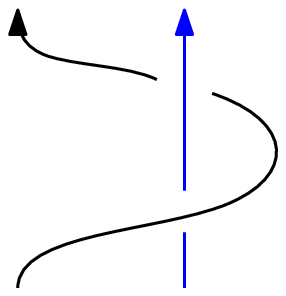}\,=\,c^2\,\includegraphics[height=2cm,valign=c]{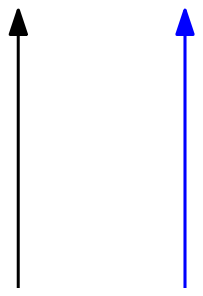}.
    \end{equation}
\end{enumerate}
Here the black lines are strands between basepoints in $\mathbf{q}$ and the straight blue line connects the marked point $\star$ to itself. 

The product is given by the concatenation of braids. 
\end{definition}
 
For the purposes of relating $\mathrm{BSk}_\kappa(\Sigma,{\bf q})$ to the wrapped Floer homology, it is more convenient to give another description of the marked point relation (\ref{eq-c}). 
Let 
\begin{align}
    \Omega&(\mathrm{UConf}_{\kappa}(\Sigma\setminus\{\star\}),\mathbf{q})\label{eq-confstar}\\
    &=\{\gamma\in C^0([0,1],\mathrm{UConf}_{\kappa}(\Sigma \setminus \{\star\})) ~|~ \gamma(0)=\gamma(1)={\bf q}\}.\nonumber
\end{align}
Given $\gamma_1,\gamma_2\in\Omega(\mathrm{UConf}_{\kappa}(\Sigma\setminus\{\star\}),\mathbf{q})$ viewed as based loops on $\mathrm{UConf}_{\kappa}(\Sigma)$, let $H\colon [0,1]^2 \to \mathrm{UConf}_{\kappa}(\Sigma)$ be a homotopy between $\gamma_1$ and $\gamma_2$ relative to the boundary, i.e.,
$$H(t,0)=\gamma_1(t),\quad H(t,1)=\gamma_2(t),\quad  H(0,s)=H(1,s)={\bf q}.$$

The homotopy $H$ may intersect the marked point $\star$.  We define $\langle H,\star\rangle\coloneqq\langle H,Y\rangle$, the algebraic intersection number of $H$ and $Y$, where
$$Y:=\{\{p_1,\dots,p_{\kappa}\} \in \mathrm{UConf}_{\kappa}(\Sigma)~|~p_i=\star ~\mbox{for some}~ i\}$$
is a codimension two submanifold of $\mathrm{UConf}_{\kappa}(\Sigma)$.
Here the orientation of $Y$ is induced from that of $\Sigma$ and the orientation of $H$ is induced from $-dt\wedge ds$ on the square $[0,1]^2$ (note the coordinates on $[0,1]^2$ are $t,s$ in that order). 
This is well-defined since $H(\partial([0,1]^2))\cap Y=\varnothing$.

We identify $H_0(\Omega(\mathrm{UConf}_{\kappa}(\Sigma\setminus\{\star\}),\mathbf{q}))$ with the group algebra $\mathbb{Z}[\mathrm{Br}_{\kappa}(\Sigma\setminus\{\star\},\mathbf{q})]$ of the braid group $\mathrm{Br}_{\kappa}(\Sigma\setminus\{\star\},\mathbf{q})$.

\begin{definition}
    \label{def-brc}
    The $c$-deformed braid group $\mathrm{Br}_{\kappa}(\Sigma,\mathbf{q})_c$ of $\Sigma$ is generated by $\mathrm{Br}_{\kappa}(\Sigma\setminus\{\star\},\mathbf{q})$ and a central element $c$, subject to the following $c$-deformed homotopy relation:
\begin{equation}
        \label{eq-chomotopy}
[\gamma_2]=c^{2\langle H,\star\rangle}[\gamma_1],
\end{equation}
    where $\gamma_i \in \Omega(\mathrm{UConf}_{\kappa}(\Sigma\setminus\{\star\}),\mathbf{q})$, $H$ is the homotopy between them as above, and $[\gamma_i] \in H_0(\Omega(\mathrm{UConf}_{\kappa}(\Sigma\setminus\{\star\}),\mathbf{q})) \cong \mathbb{Z}[\mathrm{Br}_{\kappa}(\Sigma\setminus\{\star\},\mathbf{q})]$.  
\end{definition}

\begin{remark}
\label{rmk-brc}
The group algebra $\mathbb{Z}[\mathrm{Br}_{\kappa}(\Sigma,\mathbf{q})_c]$ is naturally isomorphic to the quotient of $C_0(\Omega(\mathrm{UConf}_{\kappa}(\Sigma\setminus\{\star\}),\mathbf{q}))\otimes \mathbb{Z}[c^{\pm1}]$ by the $c$-deformed homotopy relation (\ref{eq-chomotopy}).
\end{remark}

See Figure \ref{fig-c-homotopy} for an example when $\kappa=1$ and $\langle H,Y\rangle =1$.

\begin{figure}[ht]
    \centering
    \includegraphics[width=4cm]{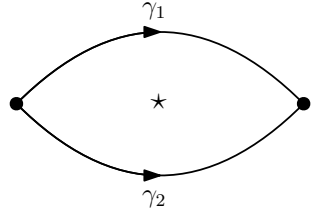}
    \caption{The deformed homotopy relation $[\gamma_2]=c^{2}[\gamma_1]$. Both dotted points are $q \in \Sigma$.}
    \label{fig-c-homotopy}
\end{figure}

Specializing to $c=1$, $\mathrm{Br}_{\kappa}(\Sigma,\mathbf{q})_c$ recovers $\mathrm{Br}_{\kappa}(\Sigma,\mathbf{q})$. 
Hence there is a central extension of groups:
\begin{equation}
        \label{eq-centralbrc}
1 \to \langle c \rangle \to \mathrm{Br}_{\kappa}(\Sigma,\mathbf{q})_c \to \mathrm{Br}_{\kappa}(\Sigma,\mathbf{q}) \to 1,
\end{equation}
where $\langle c \rangle$ is the free abelian group generated by $c$.

Let $\langle \mathrm{Br}_{\kappa,1}(\Sigma,\mathbf{q},\star),c \rangle$ denote the group generated by $\mathrm{Br}_{\kappa,1}(\Sigma,\mathbf{q},\star)$ and a central element $c$. 
Define the group $\mathrm{Br}_{\kappa}(\Sigma,\mathbf{q})'_c$ as the quotient of $\langle \mathrm{Br}_{\kappa,1}(\Sigma,\mathbf{q},\star),c \rangle$ by the marked point relation $P=c^2$ (\ref{eq-c}). 
There are two natural projections from $\langle \mathrm{Br}_{\kappa,1}(\Sigma,\mathbf{q},\star),c \rangle$ to $\mathrm{Br}_{\kappa}(\Sigma,\mathbf{q})'_c$ and $\mathrm{Br}_{\kappa}(\Sigma,\mathbf{q})_c$, respectively:
$$\xymatrix{
& \langle \mathrm{Br}_{\kappa,1}(\Sigma,\mathbf{q},\star),c \rangle \ar[dr] \ar_{P=c^2}[dl]& \\
\mathrm{Br}_{\kappa}(\Sigma,\mathbf{q})'_c \ar[rr]^{\phi}& &\mathrm{Br}_{\kappa}(\Sigma,\mathbf{q})_c
}$$
The relation $P=c^2$ also holds in $\mathrm{Br}_{\kappa}(\Sigma,\mathbf{q})_c$.
Therefore, there is a natural induced map $\phi\colon \mathrm{Br}_{\kappa}(\Sigma,\mathbf{q})'_c \to \mathrm{Br}_{\kappa}(\Sigma,\mathbf{q})_c$. 

The algebra $\mathrm{Br}_{\kappa}(\Sigma,\mathbf{q})'_c$ also fits into a central extension similar to (\ref{eq-centralbrc}):
$$1 \to \langle c \rangle \to \mathrm{Br}_{\kappa}(\Sigma,\mathbf{q})'_c \to \mathrm{Br}_{\kappa}(\Sigma,\mathbf{q}) \to 1.$$
Moreover, the following diagram commutes:
$$\xymatrix{
1 \ar[r] & \langle c \rangle \ar[r] & \mathrm{Br}_{\kappa}(\Sigma,\mathbf{q})_c \ar[r] & \mathrm{Br}_{\kappa}(\Sigma,\mathbf{q}) \ar[r] & 1 \\
1 \ar[r] & \langle c \rangle \ar[r] \ar@{=}[u] & \mathrm{Br}_{\kappa}(\Sigma,\mathbf{q})'_c \ar[r] \ar[u]_{\phi}& \mathrm{Br}_{\kappa}(\Sigma,\mathbf{q}) \ar@{=}[u] \ar[r] & 1
.}$$
By the Five Lemma, $\phi$ is an isomorphism.

Recall from Definition \ref{def-skein} that the braid skein algebra $\mathrm{BSk}_\kappa(\Sigma,\mathbf{q})$ is the quotient of $\mathbb{Z}[s^{\pm1}][\mathrm{Br}_{\kappa}(\Sigma,\mathbf{q})'_c]$ by the skein relation (\ref{eq-skein'}). 
We introduce the variable $\hbar=s-s^{-1}$ and add the corresponding skein relation to $\mathrm{Br}_{\kappa}(\Sigma,\mathbf{q})_c$.


\begin{definition}
    \label{def-hecke}
    The Hecke algebra $\mathrm{H}_{\kappa}(\Sigma,\mathbf{q})$ of $\Sigma$ is the quotient of the group algebra $\mathbb{Z}[\hbar][\mathrm{Br}_{\kappa}(\Sigma,\mathbf{q})_c]$ by the local skein relation:
        \begin{equation}
            \label{eq-skein}
            \includegraphics[width=1cm,valign=c]{poscross.eps}-\includegraphics[width=1cm,valign=c]{negcross.eps}=\hbar\includegraphics[width=1cm,valign=c]{node.eps}.
        \end{equation}  
\end{definition}

%
%
%
%
%
 
We now apply a change of variables $\hbar=s-s^{-1}$ to $\mathrm{BSk}_{\kappa}(\Sigma,\mathbf{q})$ and change the coefficient ring from $\mathbb{Z}[s^{\pm1},c^{\pm1}]$ to $\mathbb{Z}[\hbar,c^{\pm1}]$; the resulting algebra will be denoted by $\mathrm{BSk}_{\kappa}(\Sigma,\mathbf{q})|_{\hbar}$. 
 
\begin{proposition}
\label{prop-BSkHecke}
The algebra $\mathrm{H}_{\kappa}(\Sigma,\mathbf{q})$ is naturally isomorphic to $\mathrm{BSk}_{\kappa}(\Sigma,\mathbf{q})|_{\hbar}$.
\end{proposition}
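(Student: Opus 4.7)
The plan is to combine the isomorphism $\phi\colon \mathrm{Br}_\kappa(\Sigma, \mathbf{q})'_c \xrightarrow{\cong} \mathrm{Br}_\kappa(\Sigma, \mathbf{q})_c$ established above (via the Five Lemma applied to the two central extensions by $\langle c \rangle$) with the change of variables $\hbar = s - s^{-1}$. First I would restate both algebras uniformly: by the discussion preceding the proposition, $\mathrm{BSk}_\kappa(\Sigma, \mathbf{q})|_\hbar$ is the quotient of the group algebra $\mathbb{Z}[\hbar][\mathrm{Br}_\kappa(\Sigma, \mathbf{q})'_c]$ by the skein relation (\ref{eq-skein'}) with $s - s^{-1}$ formally replaced by $\hbar$, while by Definition~\ref{def-hecke} $\mathrm{H}_\kappa(\Sigma, \mathbf{q})$ is the quotient of the group algebra $\mathbb{Z}[\hbar][\mathrm{Br}_\kappa(\Sigma, \mathbf{q})_c]$ by the skein relation (\ref{eq-skein}). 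Viewed this way, both algebras are group algebras over $\mathbb{Z}[\hbar]$ quotiented by the same shape of local skein relation.

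Next I would extend $\phi$ to an isomorphism of $\mathbb{Z}[\hbar]$-algebras $\widetilde\phi\colon \mathbb{Z}[\hbar][\mathrm{Br}_\kappa(\Sigma, \mathbf{q})'_c] \xrightarrow{\cong} \mathbb{Z}[\hbar][\mathrm{Br}_\kappa(\Sigma, \mathbf{q})_c]$ by $\mathbb{Z}[\hbar]$-linearity. The key observation is that the local crossings appearing in both (\ref{eq-skein'}) and (\ref{eq-skein}) involve only strands between basepoints of $\mathbf{q}$ and not the straight $\star$-strand. They therefore lie in the natural subgroups $\mathrm{Br}_{\kappa,1}(\Sigma, \mathbf{q}, \star) \subset \mathrm{Br}_\kappa(\Sigma, \mathbf{q})'_c$ and $\mathrm{Br}_\kappa(\Sigma\setminus\{\star\}, \mathbf{q}) \subset \mathrm{Br}_\kappa(\Sigma, \mathbf{q})_c$, and are identified by $\phi$ because $\phi$ is compatible with the common projection onto $\mathrm{Br}_\kappa(\Sigma, \mathbf{q})$ and with the central subgroup $\langle c \rangle$ in the central extensions above. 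Consequently, $\widetilde\phi$ carries the two-sided ideal generated by (\ref{eq-skein'}) with $s-s^{-1}=\hbar$ onto the two-sided ideal generated by (\ref{eq-skein}), and descends to an isomorphism $\mathrm{BSk}_\kappa(\Sigma, \mathbf{q})|_\hbar \xrightarrow{\cong} \mathrm{H}_\kappa(\Sigma, \mathbf{q})$.

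The substantive content here, the matching of the marked point relation $P = c^2$ in $\mathrm{BSk}$ with the $c$-deformed homotopy relation defining $\mathrm{H}$, has already been carried out in producing $\phi$. Thus the only serious step is the Five Lemma argument preceding the statement; everything after that is a routine $\mathbb{Z}[\hbar]$-linear extension together with the substitution of variables, so I do not expect any real obstacle beyond careful bookkeeping of the dual role of $c$ as both a coefficient and a central group element.
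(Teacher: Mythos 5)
Your proof is correct and matches the argument the paper implicitly relies on: the paper states the proposition without a proof body because the entire content is already in the preceding paragraphs --- the Five Lemma isomorphism $\phi$, the observation that $\mathrm{BSk}_\kappa(\Sigma,\mathbf{q})$ is the quotient of $\mathbb{Z}[s^{\pm1}][\mathrm{Br}_\kappa(\Sigma,\mathbf{q})'_c]$ by the skein relation, and the definition of $\mathrm{H}_\kappa$ as the corresponding quotient of $\mathbb{Z}[\hbar][\mathrm{Br}_\kappa(\Sigma,\mathbf{q})_c]$. Your write-up supplies exactly the routine bookkeeping step the paper leaves tacit, namely extending $\phi$ to a $\mathbb{Z}[\hbar]$-linear isomorphism of group algebras and checking it carries the skein ideal to the skein ideal because the generating relation involves only local crossings of $\mathbf{q}$-strands, which $\phi$ leaves untouched. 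The only very small imprecision is your reference to $\mathrm{Br}_{\kappa,1}(\Sigma,\mathbf{q},\star)$ and $\mathrm{Br}_\kappa(\Sigma\setminus\{\star\},\mathbf{q})$ as ``subgroups'' of $\mathrm{Br}_\kappa(\Sigma,\mathbf{q})'_c$ and $\mathrm{Br}_\kappa(\Sigma,\mathbf{q})_c$ respectively; they are generating subsets appearing via the quotient maps, and injectivity of those maps is not literally established in the paper --- but this does not affect the argument, since the skein ideal generators are well-defined elements of the quotient groups in either case and are visibly matched by $\phi$.
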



%
%
\vspace{.2cm}
Finally, we consider the degeneration $\hbar=0$ for later use. 
The skein relation (\ref{eq-skein}) then reduces to the symmetric group relation. 
Further setting $c=1$, the free abelian group $\mathrm{H}_{\kappa}(\Sigma,\mathbf{q})|_{\hbar=0,c=1}$ has a $\mathbb{Z}$-basis 
$$\mathfrak{B}:=\{\gamma_1\cdots \gamma_{\kappa}\cdot \sigma~|~ \gamma_i \in \pi_1(\Sigma,q_i), \sigma \in S_{\kappa}\}.$$ 
Here $\gamma_i$ is the homotopy class of the $i$th strand in the braid group and $\sigma$ is the image of the natural quotient map from the braid group to the symmetric group. 
Hence we have a natural isomorphism of algebras 
\begin{align*}
\mathrm{H}_{\kappa}(\Sigma,\mathbf{q})|_{\hbar=0,c=1} \cong \mathbb{Z}[\prod_i \pi_1(\Sigma,q_i) \rtimes S_{\kappa}],
\end{align*}
where the symmetric group $S_{\kappa}$ acts on $\prod_i \pi_1(\Sigma,q_i)$ by permuting the factors.
Adding the parameter $c$, let $\pi_1(\Sigma,q)_c$ denote $\mathrm{Br}_{1}(\Sigma,q)_c$, the $1$-strand $c$-deformed braid group. 
Its group algebra $\mathbb{Z}[\pi_1(\Sigma,q)_c]$ is a $\mathbb{Z}[c^{\pm1}]$-algebra. 
\begin{lemma}
    \label{lemma-dim-Heckeh=0}
    There is an isomorphism of $\mathbb{Z}[c^{\pm1}]$-algebras:
    \begin{equation}
            \label{eq-bskh=0}
    \mathrm{H}_{\kappa}(\Sigma,\mathbf{q})|_{\hbar=0} \cong \left(\otimes_i\,\mathbb{Z}[\pi_1(\Sigma,q_i)_c]\right) \rtimes S_{\kappa},
    \end{equation} 
    where the tensor product is over $\mathbb{Z}[c^{\pm1}]$.
\end{lemma}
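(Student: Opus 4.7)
The plan is to construct mutually inverse $\mathbb{Z}[c^{\pm 1}]$-algebra maps between the two sides. At $\hbar=0$ the skein relation~(\ref{eq-skein}) becomes $\sigma_i=\sigma_i^{-1}$, i.e.\ $\sigma_i^2=1$ at the group level, so $\mathrm{H}_\kappa(\Sigma,\mathbf{q})|_{\hbar=0}$ is the group algebra of the quotient $\mathrm{Br}_\kappa(\Sigma,\mathbf{q})_c/\langle\sigma_i^2\rangle$; the claim thus reduces to a group isomorphism between this quotient and $\bigl(\prod_i \pi_1(\Sigma,q_i)_c\bigr)\rtimes S_\kappa$.

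Define $\Phi$ from the right-hand side by sending each $\gamma_i\in\pi_1(\Sigma,q_i)_c$ to the braid class in which only the $i$th strand moves, tracing a representative of $\gamma_i$ in $\Sigma\setminus\{\star\}$, while all other strands stay constant at the $q_j$'s; the $c$-data records intersections of any homotopy with $\star$ via~(\ref{eq-chomotopy}). Send each $\sigma\in S_\kappa$ to a fixed positive braid lift $\hat\sigma\in\mathrm{Br}_\kappa(\Sigma,\mathbf{q})_c$. To verify $\Phi$ is a well-defined algebra homomorphism in the quotient $\mathrm{Br}_\kappa(\Sigma,\mathbf{q})_c/\langle\sigma_i^2\rangle$, I would check: (a) homotopic representatives of $\gamma_i$ yield the same element, since any needed crossing of a static strand is absorbed by $\sigma_k^2=1$; (b) distinct single-strand braids commute, i.e.\ $\Phi(\gamma_i)\Phi(\gamma_j)=\Phi(\gamma_j)\Phi(\gamma_i)$ for $i\neq j$, by the same sliding argument; (c) $\hat\sigma\,\Phi(\gamma_i)\,\hat\sigma^{-1}=\Phi(\gamma_{\sigma(i)})$, which is immediate from how $\hat\sigma$ permutes strand labels, and the choice of lift $\hat\sigma$ is immaterial because any two positive lifts of $\sigma$ differ by a product of $\sigma_k^2$'s.

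For the inverse $\Psi$, given $\beta\in\mathrm{Br}_\kappa(\Sigma,\mathbf{q})_c$ with underlying permutation $\tau=\sigma(\beta)\in S_\kappa$, send $\beta$ to $([\gamma_1]_c\otimes\cdots\otimes[\gamma_\kappa]_c)\cdot\tau$, where $\gamma_i$ is the $i$th strand of the pure braid $\beta\cdot\hat\tau^{-1}$, projected to $\Sigma$ and viewed in $\pi_1(\Sigma,q_i)_c$ via Definition~\ref{def-brc}. This descends to the quotient because $\sigma_k$ and $\sigma_k^{-1}$ produce identical permutation and strand-loop data; a direct check on generators gives $\Phi\circ\Psi=\mathrm{id}$ and $\Psi\circ\Phi=\mathrm{id}$, and specialization to $c=1$ recovers the basis $\mathfrak{B}$ stated just before the lemma. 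The main obstacle is step (b): a loop $\gamma_i$ may be forced to pass across the positions $q_j$ of other strands, and one must show every such passage is killed by the local relations. I would handle this by induction on $\kappa$ via the Fadell--Neuwirth fibration $\mathrm{PConf}_\kappa(\Sigma)\to\Sigma$ forgetting all but one point, reducing the sliding moves on the remaining strands to classical braid moves involving only $\sigma_k^2$'s, which vanish in the quotient.
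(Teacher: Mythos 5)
Your argument is correct and, at bottom, proves the same thing the paper does: both hinge on the structural fact that the $c$-deformed surface braid group modulo the normal closure of the $\sigma_k^2$'s is the (identified-$c$) product $\prod_i \pi_1(\Sigma,q_i)_c \rtimes S_\kappa$, which the paper packages as ``$\mathfrak{B}_c$ is a $\mathbb{Z}$-basis of both sides'' and establishes via a surjection plus equal bases, whereas you make the bijection explicit through the pair $(\Phi,\Psi)$. You correctly locate the real work in showing that a pure braid with trivial strand projections lies in the normal closure of the $\sigma_k^2$'s, and the Fadell--Neuwirth induction is the right tool (the kernel of $\pi_1(\Sigma\setminus\{\kappa{-}1\,\text{pts}\}) \to \pi_1(\Sigma)$ is normally generated by small loops around the forgotten punctures, which are conjugates of $\sigma_k^2$); the paper implicitly relies on exactly this fact. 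One place to tighten: the claim that ``any two positive lifts of $\sigma$ differ by a product of $\sigma_k^2$'s'' is imprecise --- what you need is that the lifted map $S_\kappa \to \mathrm{Br}_\kappa(\Sigma,\mathbf{q})_c/\langle\!\langle\sigma_k^2\rangle\!\rangle$ is a group section, and the clean route is to take $\hat\sigma$ to be the permutation braid supported in the small disk $U$ containing $\mathbf{q}$; then $\hat\sigma\hat\sigma'(\widehat{\sigma\sigma'})^{-1}$ is a pure braid of $U$, and the classical fact that $\mathrm{PBr}_\kappa(D^2)$ is generated by conjugates of the $\sigma_k^2$ makes it vanish in the quotient. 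With that fix, your $\Phi$ is well-defined and multiplicative, and the rest goes through as you outline.
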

\begin{proof}
The free abelian group $\mathrm{H}_{\kappa}(\Sigma,\mathbf{q})|_{\hbar=0}$ has $\mathbb{Z}$-basis $$\mathfrak{B}_c:=\{\gamma_1\cdots \gamma_{\kappa}\cdot \sigma~|~ \gamma_i \in \pi_1(\Sigma,q_i)_c, \sigma \in S_{\kappa}\}.$$
We have a natural surjective homomorphism 
\begin{align*}
\left(\tilde{\otimes}_i\,\mathbb{Z}[\pi_1(\Sigma,q_i)_c]\right) \rtimes S_{\kappa} \to \mathrm{H}_{\kappa}(\Sigma,\mathbf{q})|_{\hbar=0},
\end{align*} 
where $\tilde{\otimes}$ denotes the tensor product over $\mathbb{Z}$. 
It factors through 
$$\left(\otimes_i\,\mathbb{Z}[\pi_1(\Sigma,q_i)_c]\right) \rtimes S_{\kappa} \to \mathrm{H}_{\kappa}(\Sigma,\mathbf{q})|_{\hbar=0}$$
since $c$ is central in $\mathrm{H}_{\kappa}(\Sigma,\mathbf{q})|_{\hbar=0}$. 
The latter map is injective since the domain and target have the same $\mathbb{Z}[c^{\pm1}]$-basis $\mathfrak{B}$. 
\end{proof}

\section{The parameter $c$ in HDHF}
\label{section-c}
Motivated by the $c$-deformed homotopy relation (\ref{eq-chomotopy}), in this section we define $CW(\sqcup_{i}T_{q_i}^*\Sigma)_c$, the wrapped HDHF homology of disjoint cotangent fibers with an additional parameter $c$. 
We inherit the notation from Section \ref{subsection-wrap}. 

Let $CF(\phi^1_{H_V}(\sqcup_{i}T_{q_i}^*\Sigma),\sqcup_{i}T_{q_i}^*\Sigma)_c\coloneqq CF(\phi^1_{H_V}(\sqcup_{i}T_{q_i}^*\Sigma),\sqcup_{i}T_{q_i}^*\Sigma)\otimes \mathbb{Z}[c^{\pm 1}]$ as a $\mathbb{Z}$-module.


Given $u\in\mathcal{M}(\mathbf{y}_1,\dots,\mathbf{y}_m,\mathbf{y}_0)$ of index 0 or 1, consider its projection to $\Sigma$ and denote the image by $\pi_\Sigma(u)$. 
We enhance the $A_\infty$-operations from Section \ref{subsection-wrap} to include $c$-coefficients by keeping track of modified intersections of $\pi_\Sigma (u)$ and a fixed marked point $\star\in\Sigma$. 
Note that we cannot directly take the intersection number of $\pi_\Sigma(u)$ and $\star$ since the boundary of $\pi_\Sigma(u)$ could cross $\star$ in a generic 1-parameter family and the $A_\infty$-relation would not be satisfied. 

To remedy this issue, we carefully choose the marked point $\star\in\Sigma$ and modify $\pi_\Sigma(u)$. 

Consider the set of $V$-perturbed geodesics (see Definition \ref{def-v-geodesic}) with endpoints in $\{q_1,\dots,q_\kappa\}$. Since there are only countably many such $V$-perturbed geodesics, we can choose a generic marked point $\star\in\Sigma$ in the complement of the images of these perturbed geodesics. 

We then homotop the boundary of $\pi_\Sigma(u)$ on $\Sigma$ to piecewise $V$-perturbed geodesics, which can be guaranteed to be disjoint from $\star$; see Figure \ref{fig-c-modify}.
\begin{figure}[ht]
    \centering
    \includegraphics[width=6cm]{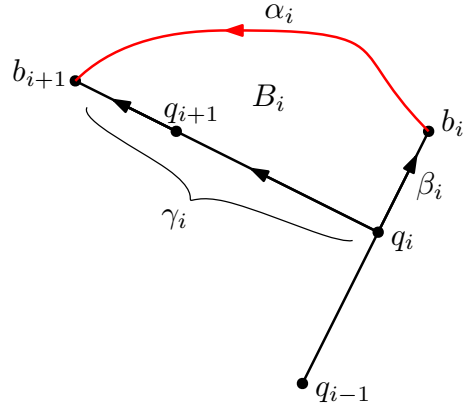}
    \caption{The projection $\pi_\Sigma(u)$ of $u$ to $\Sigma$. The path $\alpha_i$ is the projection of $u|_{\partial_i\dot F}$, where  $\partial_i\dot{F}$ is a component of $\partial \dot{F}$. We choose any homotopy from $\alpha_i$ to $-\beta_i+\gamma_i$, where the relative homology class $B_i$ that is swept out does not depend on the choice of homotopy. In the figure we are assuming that $V$ is independent of $t$ and hence $q_i$, $q_{i+1}$ and $b_{i+1}$ lie on the same $V$-perturbed geodesic.} 
    \label{fig-c-modify}
\end{figure}
Specifically, suppose the domain of $u$ is $\dot F$ and $\{p_{i-1},p_i,p_{i+1}\}$ are three consecutive boundary marked points on $\partial F$, ordered according to the boundary orientation of $\partial F$.
Write $\pi_{T^*\Sigma} \circ u(p_i)$ etc. for the values of the continuous extensions of $\pi_{T^*\Sigma} \circ u$ to the puncture $p_i$ etc.
Denote the boundary arc from $p_i$ to $p_{i+1}$ by $\partial_i\dot{F}$. 
Suppose the cotangent fibers $T^*_{q_{i-1}}\Sigma$, $T^*_{q_{i}}\Sigma$ and $T^*_{q_{i+1}}\Sigma$ are wrapped using the wrapping functions $\psi_{i-1}$, $\psi_{i}$ and $\psi_{i+1}$, respectively. 
For $r=0,1$, suppose $\pi_{T^*\Sigma}\circ u(p_{i+r})\in \psi_{i+r-1}(T^*_{q_{i+r-1}}\Sigma)\cap\psi_{i+r}(T^*_{q_{i+r}}\Sigma)$, which corresponds to the intersection of two Hamiltonian chords that start from $T^*_{q_{i+r-1}}\Sigma$ (induced by $\psi_{i+r-1}$) and $T^*_{q_{i+r}}\Sigma$ (induced by $\psi_{i+r}$). The Legendre transforms of these Hamiltonian chords (see Definition \ref{def-leg-L}) correspond to certain $V$-perturbed geodesics on $\Sigma$: Let $\beta_{i+r-1}$ be that from $q_{i+r-1}$ to $b_{i+r-1}$, and $\gamma_{i+r-1}$ be that from $q_{i+r-1}$ to $b_{i+r}$, where $b_i=\pi_{\Sigma}\circ u(p_{i})$.  (Observe that $q_i$, $q_{i+1}$ and $b_{i+1}$ lie on the same $V$-perturbed geodesic, if $V=V(t,x)$ is independent of $t$; this is because being able to intersect in $T^*M$ means that the tangent vector of the two $V$-perturbed geodesics at $b_i$ are the same.)
Let $\alpha_i$ be the path $\pi_{\Sigma}\circ u(\partial_i \dot{F})$. 

Fix parametrizations of $\alpha_i$, $\beta_i$ and $\gamma_i$ by the interval $[0,1]$. 
Note that $\alpha_i$ and its {\em piecewise $V$-geodesic replacement $\beta_i^{-1}\cdot\gamma_i$} are homotopic as paths from $b_i$ to $b_{i+1}$; this is due to the facts that $\alpha_i$, $\beta_i$ and $\gamma_i$ are all projections of paths in $\psi_i(T^*_{q_{i}}\Sigma)$ and that $T^*_{q_{i}}\Sigma$ is contractible. 
Let $B_i$ be a homotopy between $\alpha_i$ and $\beta_i^{-1}\cdot\gamma_i$ relative to boundary.
We extend the image $\pi_\Sigma(u)$ on $\Sigma$ by the homotopy $B_i$ for all $\partial_i\dot{F}\subset\partial\dot{F}$ so that its new boundary lies in $C\coloneqq\bigcup_i(\beta_i\cup\gamma_i)$; this defines a relative homology class $[\pi_\Sigma(u)]'\in H_2(\Sigma,C)$.

Given two homotopies $B_i$ and $B_i'$ from $\alpha_i$ to its piecewise $V$-geodesic replacement $\beta_i^{-1}\cdot\gamma_i$, their difference determines a map $S^2\to\Sigma$, which induces the zero map on $H_2(S^2)\to H_2(\Sigma,C)$. Hence $[\pi_\Sigma(u)]'$ does not depend on the choice of $\{B_i\}$ and the algebraic intersection number $\langle u,\star\rangle\coloneqq\langle [\pi_\Sigma(u)]',\star\rangle$
is well-defined.



We modify the $\mu^m$-composition map so that 
\begin{equation}
\label{eq-mu-2-c}
    \mu^m(\mathbf{y}_1,\dots,\mathbf{y}_m)=\sum_{u\in\mathcal{M}^{\mathrm{ind}=0}(\mathbf{y}_1,\dots,\mathbf{y}_m,\mathbf{y}_0)} (-1)^{\natural(u)}\cdot c^{2\langle u,\star\rangle}\cdot\hbar^{\kappa-\chi(u)}\cdot\mathbf{y}_0,
\end{equation}
where $u$ ranges over curves of index 0, and $\natural(u)\in\mathbb{Z}$. 

Since $C$ is disjoint from $\star$, $\langle u,\star\rangle$ is constant for any 1-parameter family of $u$. Therefore, by analyzing the degeneration of index-1 moduli spaces, we see that $CW(\sqcup_{i}T_{q_i}^*\Sigma)_c$ is an $A_\infty$-algebra. Proposition \ref{lemma-algebra} can then be improved to:

\begin{proposition}
\label{lemma-algebra-c}
The $A_\infty$-algebra $CW(\sqcup_{i}T_{q_i}^*\Sigma)_c$ is supported in degree zero, and hence is an ordinary algebra.
\end{proposition}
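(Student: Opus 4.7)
The plan is to essentially reuse the proof of Proposition \ref{lemma-algebra}, since the only change in passing from $CW(\sqcup_{i}T_{q_i}^*\Sigma)$ to $CW(\sqcup_{i}T_{q_i}^*\Sigma)_c$ is the tensoring with $\mathbb{Z}[c^{\pm1}]$ and the insertion of the weight $c^{2\langle u,\star\rangle}$ in the definition of $\mu^m$ (see equation (\ref{eq-mu-2-c})). Neither modification alters the underlying generators or the dimension of the relevant moduli spaces, so the grading argument carries over verbatim once we assign an appropriate degree to $c$.

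First I would observe that the generators of $CF(\phi^1_{H_V}(\sqcup_{i}T_{q_i}^*\Sigma),\sqcup_{i}T_{q_i}^*\Sigma)_c$ are, by definition, of the form $\mathbf{y}\otimes c^k$ for $k\in\mathbb{Z}$, where $\mathbf{y}$ runs over generators of the underlying (non-$c$-deformed) complex. By Lemma \ref{lemma-grading}, each such $\mathbf{y}$ satisfies $|\mathbf{y}|=0$. Since $c$ is a formal parameter tracking the algebraic intersection number $\langle u,\star\rangle$, which is a topological (rather than Fredholm-index-theoretic) quantity, the natural and consistent assignment is $|c|=0$. Combined with $|\hbar|=2-n=0$ from (\ref{eq-hgrading}) for $n=2$, this shows that every generator of $CW(\sqcup_{i}T_{q_i}^*\Sigma)_c$ has degree zero.

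Next I would invoke the general grading formula (\ref{eq-grading}): the $A_\infty$-operation $\mu^m$ has degree $2-m$. Because the chain complex is concentrated in degree zero, $\mu^m$ is automatically zero for all $m\neq 2$. Hence the only nontrivial operation is $\mu^2$, and $CW(\sqcup_{i}T_{q_i}^*\Sigma)_c$ reduces to an ordinary (associative) algebra.

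There is essentially no obstacle beyond checking that the $c$-enhancement in (\ref{eq-mu-2-c}) respects the grading, which follows because $\langle u,\star\rangle$ is a locally constant topological quantity on each connected component of the relevant moduli space and $|c|=0$. The $A_\infty$-relations needed to conclude that $\mu^2$ is associative follow from the standard analysis of index-$1$ moduli space degenerations, already invoked just before the statement of the proposition to establish that $CW(\sqcup_{i}T_{q_i}^*\Sigma)_c$ is indeed an $A_\infty$-algebra.
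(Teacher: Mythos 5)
Your proof is correct and follows exactly the paper's intent: the paper offers no separate proof, stating only that Proposition~\ref{lemma-algebra} ``can then be improved to'' the $c$-version, and the preceding discussion in Section~\ref{section-c} establishes that $\langle u,\star\rangle$ is well-defined and constant in families so that the $A_\infty$-relations persist. You simply spell out the implicit point that $|c|=0$, which makes the degree-$(2-m)$ argument carry over verbatim.
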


\section{The evaluation map}
\label{section-f}

Following Abouzaid \cite{abouzaid2012wrapped}, we construct the evaluation map
\begin{equation*}
    \mathcal{F}\colon CW(\sqcup_{i}T_{q_i}^*\Sigma)_c \to \mathrm{H}_\kappa(\Sigma) \otimes_{\mathbb{Z}[\hbar]} \mathbb{Z}[[\hbar]]
\end{equation*}
in Section \ref{subsection-ev}.
It is given by counting holomorphic curves between cotangent fibers and the zero section of $T^*\Sigma$ in the framework of HDHF. 
We then show that $\mathcal{F}$ is a homomorphism of algebras in Section \ref{subsection-homo}. 
The key ingredient is the holomorphic curve interpretation of the HOMFLY skein relation due to Ekholm-Shende \cite{ekholm2021skeins}.
We finally prove Theorem \ref{thm-main} which states that the map $\mathcal{F}$ is an isomorphism.

\subsection{The definition}
\label{subsection-ev}
At this point we rename $D_m$ as $T_{m-1}$, where $\partial_i T_{m-1} = \partial_i D_m$ for $i=0,\dots, m$. 
The disk $T_{m-1}$ will be the $A_\infty$ base direction, where $p_1,\dots, p_{m-1}\in \partial D_m =\partial T_{m-1}$ correspond to inputs ($\kappa$-tuples of intersection points) and $\partial_m D_m =\partial_m T_{m-1}$ corresponds to the output ($\kappa$-tuples of arcs on $\Sigma$).  We view all of the strip-like ends $e_i$, $i=0,\dots, m$, of $T_{m-1}$ corresponding to $p_i$ as positive ends $[0,\infty)_{s_i}\times[0,1]_{t_i}$.  
Let $\mathcal{T}_{m-1}$ be the moduli space of $T_{m-1}$ modulo automorphisms; again we choose representatives $T_{m-1}$ of equivalence classes of $\mathcal{T}_{m-1}$ in a smooth manner.  


Let $\pi_{T^*\Sigma}$ be the projection $T_{m-1}\times {T^*\Sigma}\to {T^*\Sigma}$. Choose a sufficiently generic consistent collection $T_{m-1}\mapsto J_{T_{m-1}}$ of compatible almost complex structures on $T_{m-1}\times T^*\Sigma$ for all $T_{m-1}\in \mathcal{T}_{m-1}$ and all $m\geq 2$ such that:
\begin{enumerate}
    \item[(J1')] on each fiber $\pi_{T_{m-1}}^{-1}(p)=\{p\}\times T^*\Sigma$, $J_{T_{m-1}}$ restricts to an element of $\mathcal{J}_{T^*\Sigma,\alpha_{std}}$;
    \item[(J2')] $J_{T_{m-1}}$ projects holomorphically onto $T_{m-1}$;
    \item[(J3')] over each strip-like end $[0,\infty)_{s_i}\times[0,1]_{t_i}$, for $s_i$ sufficiently positive, $J_{T_{m-1}}$ is invariant in the $s_i$-direction and takes $\partial_{s_i}$ to $\partial_{t_i}$.
\end{enumerate}

Recall the time-$t$ Hamiltonian flow of $\phi^t_{H_V}$ (\ref{eq-H}). 
We will refer to $\Sigma$ as the zero section of $T^*\Sigma$ when it is clear from the context.
Let $\mathbf{q}$ (resp.\ $\mathbf{q}'$) be the set of intersection points between $\sqcup_{i}T_{q_i}^*\Sigma$ (resp.\ $\phi^1_{H_V}(\sqcup_{i}T_{q_i}^*\Sigma)$)  and $\Sigma$ and let $\mathbf{y}\in CF(\phi^1_{H_V}(\sqcup_{i}T_{q_i}^*\Sigma),\sqcup_{i}T_{q_i}^*\Sigma)$. Note that there is a canonical identification between ${\bf q}$ and ${\bf q}'$, 


We define $\mathcal{H}(\mathbf{q}',\mathbf{y},\mathbf{q})$ as the moduli space of maps
\begin{equation*}
    u\colon(\dot F,j)\to(T_1\times {T^*\Sigma},J_{T_1}),
\end{equation*}
where $(F,j)$ is a compact Riemann surface with boundary, $\mathbf{p}_0,\mathbf{p}_1,\mathbf{p}_2$ are disjoint $\kappa$-tuples of boundary marked points of $F$, $\dot F=F\setminus\cup_i \mathbf{p}_i$, and $u$ satisfies:
$$
\left\{
    \begin{tabular}{lll}
        $du\circ j=J_{T_1}\circ du$;\\
        
        $\pi_{T^*\Sigma}\circ u(z)\in\phi^1_{H_V}(\sqcup_{i}T_{q_i}^*\Sigma)$ if $\pi_{T_1}\circ u(z)\subset\partial_0 T_1$;\\ 
        each component of $\partial\dot F$ that projects to $\partial_0 T_1$ maps to a distinct $\phi^1_{H_V}(T^*_{q_i}\Sigma)$;\\
        $\pi_{T^*\Sigma}\circ u(z)\in\sqcup_{i}T_{q_i}^*\Sigma$ if $\pi_{T_1}\circ u(z)\subset\partial_1 T_1$;\\
        each component of $\partial\dot F$ that projects to $\partial_1 T_1$ maps to a distinct $T^*_{q_i}\Sigma$;\\
        $\pi_{T^*\Sigma}\circ u(z)\in\Sigma$ if $\pi_{T_1}\circ u(z)\subset\partial_2 T_1$;\\
        $\pi_{T^*\Sigma}\circ u$ tends to $\mathbf{q}'$, $\mathbf{y}$, $\mathbf{q}$  as $s_0, s_1,s_2\to +\infty$;\\
        $\pi_{T_1}\circ u$ is a $\kappa$-fold branched cover of a fixed $T_1\in\mathcal{T}_1$.
    \end{tabular}
\right.
$$
See Figure \ref{fig-F}.

\begin{figure}[ht]
    \centering
    \includegraphics[width=5cm]{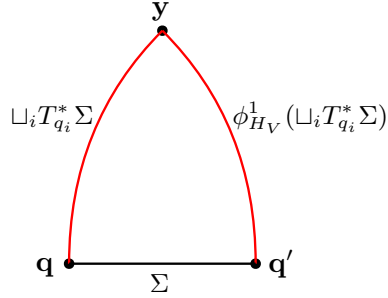}
    \caption{The $A_\infty$ base direction $T_1$ for $\mathcal{H}(\mathbf{q}',\mathbf{y},\mathbf{q})$. The notation denotes the corresponding preimages of $\pi_{T_1}$ in $T^*\Sigma$, e.g., $p_0$ is denoted by $\mathbf{q}'$ since $\pi_{T^*\Sigma}\circ u$ tends to $\mathbf{q}'$ as $\pi_{T_1}\circ u\to p_0$.}
    \label{fig-F}
\end{figure}

\begin{lemma}
    \label{lemma-dim-triangle}
    Fixing generic $J_{T_1}$, $\mathcal{H}(\mathbf{q}',\mathbf{y},\mathbf{q})$ is of dimension 0 and consists of discrete regular curves for all $\mathbf{q}$, $\mathbf{y}$ and $\mathbf{q}'$.
\end{lemma}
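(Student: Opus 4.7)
The plan is to reduce the statement to a Fredholm index computation together with a transversality and Gromov compactness argument, in the spirit of Abouzaid's construction \cite{abouzaid2012wrapped} adapted to the $\kappa$-strand HDHF setting.

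First, I would write down the Fredholm index of the linearized Cauchy--Riemann operator for $u \in \mathcal{H}(\mathbf{q}',\mathbf{y},\mathbf{q})$. Adapting the index formula (\ref{eq-ind}) to the present triangular setting---where the boundary alternates among $\phi^1_{H_V}(\sqcup_i T^*_{q_i}\Sigma)$, $\sqcup_i T^*_{q_i}\Sigma$, and the zero section $\Sigma$---the index takes the schematic form
\[
\mathrm{ind}(u) = (n-2)\chi + \mu + (\text{grading and combinatorial terms}),
\]
where $\mu$ is a Maslov-type contribution. Since $n = 2$, the $(n-2)\chi$ term vanishes, so the index is independent of the Euler characteristic of the domain $\dot F$, reflecting the fact that $|\hbar| = 0$.

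Second, I would evaluate the remaining contributions. All three Lagrangians ($\Sigma$, $\sqcup_i T^*_{q_i}\Sigma$, $\phi^1_{H_V}(\sqcup_i T^*_{q_i}\Sigma)$) have vanishing Maslov class, and the grading function constructed in Section \ref{subsection-wrap} is identically zero on both the zero section and the vertical fibers. Lemma \ref{lemma-grading} gives $|\mathbf{y}| = 0$, and an entirely analogous argument---using the inverse Legendre transform (Definition \ref{def-leg-L}) to identify each generator of $\mathbf{q}$ and $\mathbf{q}'$ with a (perturbed) constant geodesic of Morse index $0$---shows $|\mathbf{q}| = |\mathbf{q}'| = 0$. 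Substituting these zeros into the formula (\ref{eq-grading}) (after its straightforward adaptation to this triangular setting) yields $\mathrm{ind}(u) = 0$, so the expected dimension of $\mathcal{H}(\mathbf{q}',\mathbf{y},\mathbf{q})$ is $0$ for every triple $(\mathbf{q}',\mathbf{y},\mathbf{q})$.

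Third, regularity at every element of $\mathcal{H}(\mathbf{q}',\mathbf{y},\mathbf{q})$ follows by choosing $J_{T_1}$ sufficiently generic within the constraints (J1$'$)--(J3$'$); this is the standard transversality argument for the Cauchy--Riemann equation with Lagrangian boundary conditions, as employed throughout \cite{colin2020applications}. Discreteness follows from Gromov compactness: the Lagrangians are exact (ruling out disc and sphere bubbling), the Hamiltonian is quadratic at infinity (keeping images in a compact region of $T^*\Sigma$), and the finite asymptotic data $(\mathbf{q}',\mathbf{y},\mathbf{q})$ together with the uniform topological energy bound rules out non-compact limits inside a fixed asymptotic class. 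The main technical obstacle I foresee is pinning down the triangular index formula precisely when one boundary label is the zero section rather than a wrapped cotangent fiber, and verifying that the various Maslov and Conley--Zehnder contributions combine to $0$; once that bookkeeping is in place, the transversality and compactness steps are standard.
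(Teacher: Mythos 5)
Your proof takes the same route as the paper's: establish $|\mathbf{q}|=|\mathbf{q}'|=|\mathbf{y}|=0$ via the grading discussion and Lemma~\ref{lemma-grading}, substitute into the index formula~(\ref{eq-grading}) (noting, as the paper remarks, that it still applies when one Lagrangian boundary is the zero section) to get index~$0$, and finish with standard transversality. One small imprecision: $\mathbf{q}=\sqcup_i T^*_{q_i}\Sigma\cap\Sigma=\{q_1,\dots,q_\kappa\}$ involves no Hamiltonian chord, so the inverse-Legendre-transform argument does not literally apply there---$|\mathbf{q}|=0$ is read directly off the observation preceding Lemma~\ref{lemma-grading} that the grading functions on both the zero section and the cotangent fibers vanish identically.
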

\begin{proof}
By the discussion before Lemma \ref{lemma-grading}, $|q_1|=\dots=|q_\kappa|=|q'_1|=\dots=|q'_\kappa|$=0. 
Hence $|\mathbf{q}|=|\mathbf{q}'|=0$. 
By Lemma \ref{lemma-grading}, $|\mathbf{y}|=0$. 
We then see that the virtual dimension of $\mathcal{H}(\mathbf{q}',\mathbf{y},\mathbf{q})$ is 0 by the index formula (\ref{eq-grading}); note that the same index formula holds even when we do not assume that the copies of the zero section $\Sigma$ are disjoint.
The lemma then follows from standard transversality arguments.
\end{proof}

Let $\mathcal{H}^\chi(\mathbf{q}',\mathbf{y},\mathbf{q})$ be the subset of $\mathcal{H}(\mathbf{q}',\mathbf{y},\mathbf{q})$ such that $\chi(\dot F)=\chi$. 

\begin{lemma}
    \label{lemma-H-T-1}
    Given $\mathbf{q}$, $\mathbf{y}$ and $\mathbf{q}'$, the moduli space $\mathcal{H}^\chi(\mathbf{q}',\mathbf{y},\mathbf{q})$ consists of finitely many curves for each Euler characteristic $\chi$.
\end{lemma}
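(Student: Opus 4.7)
The plan is to adapt the compactness argument used in Lemma \ref{lemma-compactness}, itself modeled on \cite[Appendix B]{abouzaid2010geometric} and \cite[Section 7]{abouzaid2010open}, to the moduli space $\mathcal{H}^\chi(\mathbf{q}',\mathbf{y},\mathbf{q})$. Since the virtual dimension is zero by Lemma \ref{lemma-dim-triangle}, it suffices to prove sequential compactness of $\mathcal{H}^\chi$: compactness of a transversely cut out $0$-dimensional moduli space forces finiteness.

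First, for any sequence $\{u_n\} \subset \mathcal{H}^\chi$, I would establish a uniform $\omega$-energy bound. Because the asymptotic tuples $\mathbf{q}'$, $\mathbf{y}$, $\mathbf{q}$ are fixed and the Lagrangians $\Sigma$, $T^*_{q_i}\Sigma$, and $\phi^1_{H_V}(T^*_{q_i}\Sigma)$ are all exact with chosen primitives, Stokes' theorem bounds the energy of $u_n$ in terms of the action values at the asymptotic Hamiltonian chords, the values of the primitives at the asymptotic intersection points, and a topological contribution depending only on $\chi$.

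Next I would establish a $C^0$ bound preventing escape to infinity in the non-compact target $T^*\Sigma$. The quadratic-at-infinity Hamiltonian $H_V$ of (\ref{eq-H}), together with conditions (J1')--(J3') on $J_{T_1}$ and the fact that all Lagrangian boundary conditions are cylindrical at infinity, allows an integrated maximum principle in the style of Abouzaid--Seidel to apply fiberwise on $T_1 \times T^*\Sigma$; equivalently one appeals directly to the compactness statements of \cite[Appendix B]{abouzaid2010geometric}. This confines $\pi_{T^*\Sigma} \circ u_n$ to a compact subset of $T^*\Sigma$ determined by the asymptotic data and $\chi$.

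With these two bounds in hand, Gromov--SFT compactness yields a subsequence converging to a (possibly broken, possibly nodal) holomorphic building. Exactness of $T^*\Sigma$ and of all Lagrangian boundary conditions rules out sphere and disk bubbles. Any strip breaking along one of the ends $e_0, e_1, e_2$ or any nodal degeneration along a boundary arc would decompose the limit into components whose virtual dimensions sum to zero; for a sufficiently generic $J_{T_1}$ none of them can have negative virtual dimension, so the only possibility is that the limit is itself a smooth element of $\mathcal{H}^\chi(\mathbf{q}',\mathbf{y},\mathbf{q})$. Therefore $\mathcal{H}^\chi$ is compact and hence finite. I expect the main obstacle to be the $C^0$ bound, since the boundary conditions simultaneously involve a wrapped cotangent fiber, unwrapped cotangent fibers, and the zero section, and one must check that the interplay of the wrapping with the zero section along $\partial_2 T_1$ still permits the maximum-principle argument; this should however be a direct adaptation of the techniques used to set up wrapped Fukaya categories of cotangent bundles.
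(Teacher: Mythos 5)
Your proof takes essentially the same approach as the paper: the paper's argument is a terse invocation of an energy bound followed by Gromov compactness, relying on the same standard compactness machinery for wrapped Floer theory of cotangent bundles. You have simply made explicit the auxiliary $C^0$-bound via the maximum principle, the derivation of the energy bound from exactness via Stokes' theorem, and the index/transversality argument ruling out broken or nodal limits, all of which the paper leaves implicit (and which are also cited in the parallel Lemma \ref{lemma-compactness}).
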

\begin{proof} Each $\mathbf{y}$ determines a unique $\mathbf{q}$ and $\mathbf{q}'$. Since there is an energy bound for curves in $\mathcal{H}(\mathbf{q}',\mathbf{y},\mathbf{q})$, by Gromov compactness, $\mathcal{H}(\mathbf{q}',\mathbf{y},\mathbf{q})$ contains a finite number of curves for each $\chi$.
\end{proof}

Fix a parametrization of the arc $\partial_2 T_1$ from $p_0$ to $p_2$ by $\tau\colon [0,1] \to \partial_2 T_1$.
There exists a sufficiently generic consistent collection $\{T_{m}\mapsto J_{T_{m}}\}$ such that for all $u\in\mathcal{H}(\mathbf{q}',\mathbf{y},\mathbf{q})$, $(\pi_{T^*\Sigma}\circ u)\circ(\pi_{T_1}\circ u)^{-1}\circ\tau(t)$ consists of $\kappa$ distinct points on $\Sigma\setminus\{\star\}$ for each $t\in[0,1]$ and hence gives a path in $\mathrm{UConf}_{\kappa}(\Sigma\setminus\{\star\})$:
$$\gamma(u)\colon [0,1] \to \mathrm{UConf}_{\kappa}(\Sigma\setminus\{\star\}), \quad t\mapsto (\pi_{T^*\Sigma}\circ u)\circ(\pi_{T_1}\circ u)^{-1}\circ\tau(t).$$
This is possible since by the usual transversality argument the evaluation map at any point on the domain of $u$ is a submersion subject to the given Lagrangian boundary constraints.
Define 
\begin{equation*}
    \label{eq-confstarpath}
    \Omega(\mathrm{UConf}_{\kappa}(\Sigma\setminus\{\star\}),\mathbf{q}',\mathbf{q})=\{\gamma\colon [0,1] \to \mathrm{UConf}_{\kappa}(\Sigma \setminus \{\star\}) ~|~ \gamma(0)=\mathbf{q}',\gamma(1)=\mathbf{q}\}.
\end{equation*}
Then $\gamma(u) \in \Omega(\mathrm{UConf}_{\kappa}(\Sigma\setminus\{\star\}),\mathbf{q}',\mathbf{q})$.

For $u\in\mathcal{H}(\mathbf{q}',\mathbf{y},\mathbf{q})$, we can define $[\pi_{\Sigma}(u)]'$ as in Section \ref{section-c}: We extend the image $\pi_\Sigma(u)$ by the homotopies $B_i$ from Section \ref{section-c}, where $\partial_i\dot{F}$ ranges over all boundary arcs of $\partial \dot{F}$ which are not of ``output type''.
Here $\partial_i \dot{F}$ is of ``output type'' if $\pi_{T_1}\circ u(\partial_i \dot{F})\subset{\partial_2 T_1}$. 
We then define the algebraic intersection number
\begin{equation*}
    \langle u,\star\rangle\coloneqq\langle [\pi_\Sigma(u)]',\star\rangle.
\end{equation*}

We now define the evaluation map 
\begin{align}
\label{eq-E}
    \mathcal{E}\colon CW(\sqcup_{i}T_{q_i}^*\Sigma)_c&\to C_0(\Omega(\mathrm{UConf}_{\kappa}(\Sigma\setminus\{\star\}),\mathbf{q}',\mathbf{q}))\otimes \mathbb{Z}[c^{\pm1}]\otimes \mathbb{Z}[[\hbar]],\\
    \mathbf{y}&\mapsto\sum_{u\in\mathcal{H}(\mathbf{q}',\mathbf{y},\mathbf{q})} (-1)^{\natural(u)}\cdot c^{2\langle u,\star\rangle}\cdot\hbar^{\kappa-\chi(u)}\cdot \gamma(u).\nonumber
\end{align}




Since the perturbation term $V$ of (\ref{eq-H}) has small $W^{1,2}$-norm, the Hamiltonian vector field $X_{H_V}$ has small norm near the zero section $\Sigma$ and hence $\mathbf{q}'$ is close to $\mathbf{q}$. 
We then choose nonintersecting short paths $\gamma_i$ on $\Sigma$ from $q_i$ to $q'_i$ for $i=1,\dots,\kappa$. 
By pre-concatenating with $\{\gamma_i\}$, we can identify $\Omega(\mathrm{UConf}_{\kappa}(\Sigma\setminus\{\star\}),\mathbf{q}',\mathbf{q})$ with $\Omega(\mathrm{UConf}_{\kappa}(\Sigma\setminus\{\star\}),\mathbf{q})$ as in (\ref{eq-confstar}). 
By Definitions \ref{def-brc} and \ref{def-hecke} there is a natural projection map
\begin{align}
\label{eq-quotient}
    \mathcal{P}\colon C_0(\Omega(\mathrm{UConf}_{\kappa}(\Sigma\setminus\{\star\}),\mathbf{q}))\otimes \mathbb{Z}[c^{\pm1}]\otimes \mathbb{Z}[[\hbar]] &\to \mathrm{H}_\kappa(\Sigma,\mathbf{q})\otimes_{\mathbb{Z}[\hbar]} \mathbb{Z}[[\hbar]],
\end{align}
by first taking the $c$-deformed homotopy class and then quotienting by the skein relation. 
We finally define the evaluation map 
\begin{equation} \label{eq-F}
\mathcal{F}=\mathcal{P}\circ\mathcal{E}\colon CW(\sqcup_{i}T_{q_i}^*\Sigma)_c \to  \mathrm{H}_\kappa(\Sigma,\mathbf{q})\otimes_{\mathbb{Z}[\hbar]} \mathbb{Z}[[\hbar]].
\end{equation}
Note that $\mathcal{E}$ depends on the choice of the parametrization $\tau$ but $\mathcal{F}$ does not.

\subsection{The isomorphism}
\label{subsection-homo}
Both the domain and target of the map $\mathcal{F}$ in (\ref{eq-F}) are ordinary algebras. We will show that $\mathcal{F}$ is an isomorphism of algebras in this subsection. 

We consider a moduli space of curves whose $A_\infty$ base direction is $T_2\in \mathcal{T}_2$; see Figure \ref{fig-T2}. Let $\mathbf{q}''$ be the set of intersection points between $\phi^2_{H_V}(\sqcup_{i}T_{q_i}^*\Sigma)$ and $\Sigma$; there are canonical identifications ${\bf q}\simeq {\bf q}'\simeq {\bf q}''$.   
\begin{figure}[ht]
    \centering
    \includegraphics[width=6cm]{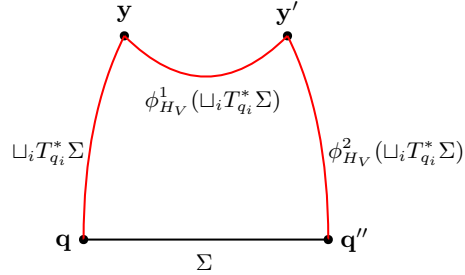}
    \caption{The $A_\infty$ base direction $T_2$ for $\mathcal{H}(\mathbf{q}'',\mathbf{y}',\mathbf{y},\mathbf{q})$.}
    \label{fig-T2}
\end{figure}
In this case, a 1-parameter family of $T_2\in\mathcal{T}_2$ may degenerate into broken curves in $\partial \mathcal{T}_2$ as shown in Figure \ref{fig-broken}.
As in Section \ref{subsection-wrap}, we need to construct the consistent collection carefully near the point $T_2'$ of $\partial\mathcal{T}_2$ corresponding to the left-hand side of Figure \ref{fig-broken}. 

\begin{figure}[ht]
    \centering
    \includegraphics[width=12cm]{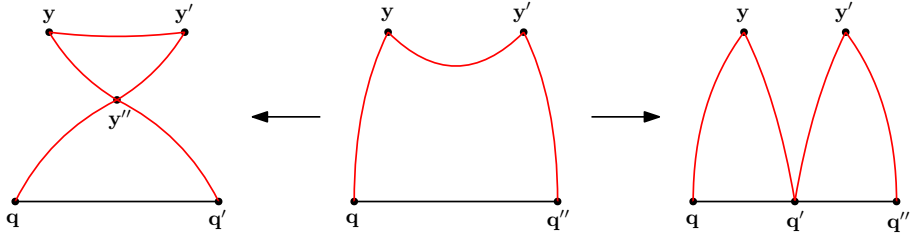}
    \caption{Degeneration of $\mathcal{H}^{\operatorname{ind}=1,\chi}(\mathbf{q}'',\mathbf{y}',\mathbf{y},\mathbf{q})$ in the $T_2$-direction: concatenation of curves in $\mathcal{M}^{\operatorname{ind}=0,\chi'}(\mathbf{y}',\mathbf{y},\mathbf{y}'')$ and $\mathcal{H}^{\operatorname{ind}=0,\chi''}(\mathbf{q}',\mathbf{y}'',\mathbf{q})$ (left); concatenation of curves in $\mathcal{H}^{\operatorname{ind}=0,\chi'}(\mathbf{q}'',\mathbf{y}',\mathbf{q}')$ and $\mathcal{H}^{\operatorname{ind}=0,\chi''}(\mathbf{q}',\mathbf{y},\mathbf{q})$ (right).}
    \label{fig-broken}
\end{figure}

We have already constructed consistent collections $J''_{D_m}$ and $\tilde L''_j(D_m)$ for $\mathcal{A}_m$.  Let us also assume that we have constructed $J_{T_m}$ for all $T_m$ on large compact subsets of $\mathcal{T}_m$ such that $J_{T_m}$ are consistent with $J''_{D_1}$ at the ends $e_i$, $i=1,\dots,m$. 
Denote by $\mathcal{U}$ the set of all $T_2\in \mathcal{T}_2$ that are sufficiently close to breaking into $D_2$ and $T_1$. Given $T_2\in \mathcal{U}$ we choose the neck region $N(T_2)$ (chosen to be smoothly dependent on $T_2$) and write $T_2^+$ and $T_2^-$ for the two components of $T_2\setminus N(T_2)$ that can be viewed as subsets of $D_2$ and $T_1$, respectively.  We then define $J_{T_2}$ to equal $J_{T_1}$ on $T_2^-\cup N(T_2)$ and $J''_{D_2}$ on $T_2^+\cup N(T_2)$, and note that $J''_{D_2}$ and $J_{T_1}$ agree on $N(T_2)$ by definition. 

We also need to choose a way of wrapping the Lagrangians corresponding to the boundaries of $T_2$ so that it is consistent with the breaking into $D_2\cup T_1$ as in the left-hand side of Figure~\ref{fig-broken}.  Note that as we degenerate to $D_2\cup T_1$, the boundary condition $\phi_{H_V}^2(\sqcup_{i}T_{q_i}^*\Sigma)$ approaches $\phi_{H_V}(\sqcup_{i}T_{q_i}^*\Sigma)$.
To keep track of this degeneration we define $\Psi_j\colon\partial_j T_2\to\mathrm{Symp}(T^*\Sigma,\omega)$, $j=0,1,2$, which depend smoothly on $T_2\in\mathcal{T}_2$ and such that:
\begin{align*}
    \Psi_j=\left\{
        \begin{array}{ll}
            \text{$\phi^{2-j}_{H_V}$ for $T_2\in\mathcal{T}_2\setminus \mathcal{U}$ and all $j$;}\\
            \text{$\psi^2\circ \phi^{2-j}_{H_V}$ on $\partial_j T_2\cap T_2^+$ for $T_2\in \mathcal{U}'$ and all $j$;}\\
            \text{$\phi^1_{H_V}$ on $\partial_0 T_2\cap T_2^-$ and $\operatorname{id}$ on $\partial_2 T_2\cap T_2^-$ for  $T_2\in \mathcal{U}'$;}
        \end{array}
    \right.
\end{align*}
where $\mathcal{U}'$ is a slight retraction of $\mathcal{U}$. 

Given generators 
$$\mathbf{y}'\in CF(\phi^2_{H_V}(\sqcup_{i}T_{q_i}^*\Sigma),\phi^1_{H_V}(\sqcup_{i}T_{q_i}^*\Sigma))_c,\quad \mathbf{y}\in CF(\phi^1_{H_V}(\sqcup_{i}T_{q_i}^*\Sigma),\sqcup_{i}T_{q_i}^*\Sigma)_c,$$ 
let $\mathcal{H}(\mathbf{q}'',\mathbf{y}',\mathbf{y},\mathbf{q})$ be the moduli space of maps $u\colon(\dot F,j)\to(T_2\times {T^*\Sigma},J_{T_2})$, where $(F,j)$ is a compact Riemann surface with boundary, $T_2\in\mathcal{T}_2$, $\mathbf{p}_0,\dots,\mathbf{p}_3$ are disjoint $\kappa$-tuples of boundary marked points of $F$ and $\dot F=F\setminus\cup_i \mathbf{p}_i$ so that $u$ satisfies:
\begin{align*}
    \left\{
        \begin{array}{ll}
            \text{$du\circ j=J_{T_2}\circ du$;}\\
            \text{$\pi_{T^*\Sigma}\circ u(z)\in \Psi_j(\pi_{T_2}\circ u(z))(\sqcup_{i}T_{q_i}^*\Sigma)$ if $\pi_{T_2}\circ u(z)\subset\partial_j T_2$ for $j=0,1,2$;}\\
            \text{each component of $\partial\dot F$ that projects to $\partial_j T_2$ maps to a distinct}\\
            \qquad \text{$\Psi_j(\pi_{T_2}\circ u(z))(T^*_{q_i}\Sigma)$ for $j=0,1,2$;}\\
            \text{$\pi_{T^*\Sigma}\circ u(z)\in\Sigma$ if $\pi_{T_2}\circ u(z)\subset\partial_3 T_2$;}\\
            \text{$\pi_{T^*\Sigma}\circ u$ tends to $\mathbf{y}'$, $\mathbf{y}$, $\mathbf{q}$  as $s_1,s_2,s_3\to+\infty$;}\\
			\text{$\pi_{T^*\Sigma}\circ u$ tends to a $\kappa$-tuple ${\bf q}''(u)$ of intersection points of $\Psi_0(\pi_{T_2}\circ u(z))(\sqcup_{i}T_{q_i}^*\Sigma)$,}\\
			\qquad \text{for $\pi_{T_2}\circ u(z)\subset\partial_0 T_2$, and $\Sigma$, in bijection with $\mathbf{q}''$ as $s_0\to +\infty$;}\\
            \text{$\pi_{T_2}\circ u$ is a $\kappa$-fold branched cover of some $T_2\in\mathcal{T}_2$.}
        \end{array}
    \right.
\end{align*}

\begin{remark} \label{rmk: identifications}
All the $\kappa$-tuples ${\bf q}''(u)$ are close to ${\bf q}''$ since $V(t,q)$ is small, and hence there is a family of diffeomorphisms that takes ${\bf q}''(u)$ to ${\bf q}''$ and is smoothly varying with ${\bf q}''(u)$. 
\end{remark}
\begin{lemma}
    \label{lemma-H-T-2}
    There exists a sufficiently generic consistent collection of almost complex structures such that $\mathcal{H}(\mathbf{q}'',\mathbf{y}',\mathbf{y},\mathbf{q})$ is of dimension 1 and is transversely cut out for all $\mathbf{q}$, $\mathbf{y}$, $\mathbf{y}'$ and $\mathbf{q}''$. Moreover, $\mathcal{H}(\mathbf{q}'',\mathbf{y}',\mathbf{y},\mathbf{q})$ admits a compactification $\overline{\mathcal{H}}(\mathbf{q}'',\mathbf{y}',\mathbf{y},\mathbf{q})$ such that its boundary $\partial\overline{\mathcal{H}}(\mathbf{q}'',\mathbf{y}',\mathbf{y},\mathbf{q})$ is of dimension 0 and contains discrete broken or nodal curves.
\end{lemma}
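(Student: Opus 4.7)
The plan is to compute the virtual dimension from the index formula \eqref{eq-grading}, obtain transversality via a Sard--Smale perturbation of the consistent collection $\{J_{T_m}\}$, and then establish compactness by a Gromov/SFT argument modeled on Lemma \ref{lemma-compactness}. For the dimension, since $\dim_{\mathbb{R}}\Sigma = n = 2$, the term $(n-2)(\chi-\kappa)$ in \eqref{eq-grading} vanishes, so the index is independent of $\chi$. The tuples $\mathbf{q},\mathbf{q}''$ lie on the zero section and hence have degree $0$, and $|\mathbf{y}|=|\mathbf{y}'|=0$ by Lemma \ref{lemma-grading}. Treating $\mathbf{q}''$ as the output and $\mathbf{y}',\mathbf{y},\mathbf{q}$ as three inputs, so $m=3$, \eqref{eq-grading} gives $\mathrm{ind}(u)=3-2=1$. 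Transversality within the class of consistent collections satisfying (J1')--(J3') then follows from a standard Sard--Smale argument applied to the universal moduli space, exactly in the spirit of how $\{J_{D_m}\}$ were made generic in Section \ref{subsection-wrap}.

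For compactness at fixed Euler characteristic $\chi$, the quadratic-at-infinity form of $H_V$ and the cylindrical boundary conditions give the $C^0$- and energy bounds needed to invoke Gromov/SFT compactness exactly as in \cite[Appendix B]{abouzaid2010geometric} and Lemma \ref{lemma-compactness}. Exactness of $T^*\Sigma$ together with exactness of the cotangent-fiber and zero-section Lagrangians excludes sphere and disk bubbling. The remaining codimension-one degenerations are of two types: splittings of the $A_\infty$ base $T_2$ along $\partial\mathcal{T}_2$, giving rise to the two broken configurations of Figure \ref{fig-broken}, and interior nodal degenerations, where two branches of $\pi_{T_2}\circ u$ come together and $\chi$ drops by $2$; the latter are precisely those accounted for by the HOMFLY skein framework of \cite{ekholm2021skeins}. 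Setting $\overline{\mathcal{H}}(\mathbf{q}'',\mathbf{y}',\mathbf{y},\mathbf{q})=\bigsqcup_\chi\overline{\mathcal{H}}^\chi(\mathbf{q}'',\mathbf{y}',\mathbf{y},\mathbf{q})$, each codimension-one stratum is a fiber product of index-$0$ moduli spaces of type $\mathcal{M}$ or $\mathcal{H}$; such factors are discrete by the same index count as in Lemma \ref{lemma-dim-triangle} and finite for each $\chi$ by the analogue of Lemma \ref{lemma-H-T-1}.

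The principal technicality is not the dimension count but the careful matching of the wrapping family $\{\Psi_j\}$ and of $J_{T_2}$ with the previously constructed consistent collections on $\mathcal{A}_m$ near the degeneration locus $\mathcal{U}\subset\mathcal{T}_2$. Without this matching, the broken configurations of Figure \ref{fig-broken} need not extend to actual boundary strata of $\overline{\mathcal{H}}^\chi$, and the $\kappa$-fold branched-cover condition on $\pi_{T_2}\circ u$ might fail in the limit. The interpolation of $J_{T_2}$ on the neck $N(T_2)$, combined with the piecewise prescription of $\Psi_j$ on $T_2^\pm$ encoded by the Liouville rescaling $\psi^2$, is designed precisely to ensure these compatibilities and to guarantee that the broken-curve strata glue back to genuine interior points of $\mathcal{H}^\chi$. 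Granted this setup, the rest reduces to the standard HDHF compactness machinery together with the nodal compactness of \cite{ekholm2021skeins}.
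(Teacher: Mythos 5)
Your proposal follows essentially the same outline as the paper's own proof: compute the virtual dimension via \eqref{eq-grading}, obtain transversality by a Sard--Smale perturbation of the consistent collection $\{J_{T_m}\}$, and establish compactness via Gromov compactness, with codimension-one phenomena of two kinds --- breaking along $\partial\mathcal{T}_2$ as in Figure~\ref{fig-broken}, and nodal degenerations where a branch point of $\pi_{T_2}\circ u$ limits to $\partial_3 T_2$. Your index computation ($\mathrm{ind}(u)=m-2=1$ with $m=3$, $n=2$, and all degrees vanishing) correctly yields dimension one as the lemma asserts; the paper's proof in fact reads ``virtual dimension $0$,'' which appears to be a slip, since the $m-2$ term of \eqref{eq-ind} already accounts for the one-dimensional modulus $\mathcal{T}_2$, so your number is the one that actually supports the statement.

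One correction: you write that at a nodal degeneration ``$\chi$ drops by $2$.'' The nodes that occur here are hyperbolic \emph{boundary} nodes on the zero-section arc $\partial_3 T_2$; as the paper states explicitly, removing the nodal point increases $\chi$ by $1$, not $2$. A jump of $2$ would be correct for an interior node, but those are excluded here because the degeneration happens precisely along the Lagrangian boundary $\Sigma$. This unit jump in $\chi$ is what makes the trivalent vertices of $\bigsqcup_\chi\overline{\mathcal{H}}^{\mathrm{ind}=1,\chi}$ match the single factor of $\hbar$ in the skein relation in \eqref{eq-algebra}, where $\chi(u_0)=\chi(u_\pm)-1$. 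The slip does not affect the truth of the present lemma (which only asserts existence of a compactification with discrete broken or nodal boundary), but it would propagate into the proof of Proposition~\ref{prop-algebra}, so it is worth fixing. Your remarks about matching the wrapping family $\{\Psi_j\}$ and $J_{T_2}$ to the consistent collections near $\partial\mathcal{T}_2$ correctly identify the technical content of the paragraphs immediately preceding the lemma.
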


\begin{proof} Similar to Lemma \ref{lemma-dim-triangle}, since $|\mathbf{y}|=|\mathbf{y}'|=0$ and $|\mathbf{q}|=|\mathbf{q}''|$, $\mathcal{H}(\mathbf{q}'',\mathbf{y}',\mathbf{y},\mathbf{q})$ has virtual dimension 0 by (\ref{eq-grading}). 
By standard transversality arguments and Gromov compactness, a 1-parameter family of curves in $\mathcal{H}(\mathbf{q}'',\mathbf{y}',\mathbf{y},\mathbf{q})$ may limit to broken curves by pinching boundaries of $T_2$ or to nodal curves by letting branch points approach $\partial T_2$. 
\end{proof}

We now define a map from $\mathcal{H}(\mathbf{q}'',\mathbf{y}',\mathbf{y},\mathbf{q})\times[0,1]$ to the set of subsets of $\Sigma$ by
\begin{equation}
\label{eq-path}
    \gamma(u)(t)=(\pi_{T^*\Sigma}\circ u)\circ(\pi_{T_2}\circ u)^{-1}\circ\tau(t),
\end{equation}
where $\tau\colon [0,1] \to \partial_3 T_2$ parametrizes the boundary arc $\partial_3 T_2$ from $p_0$ to $p_3$.   Here we are identifying all the ${\bf q}''(u)$ with ${\bf q}''$ using Remark~\ref{rmk: identifications}.

Let
\begin{align*}
    \mathcal{H}_0(\mathbf{q}'',\mathbf{y}',\mathbf{y},\mathbf{q})&=\{u\in\mathcal{H}(\mathbf{q}'',\mathbf{y}',\mathbf{y},\mathbf{q})\,|\,\gamma(u)(t)\text{ contains} \\
    &\text{$\kappa$ distinct points of $\Sigma$ and $\gamma(u)(t)\cap\{\star\}=\varnothing$ for all $t$}\,\}.
\end{align*}
In a manner similar to the definition of $\mathcal{F}$ in (\ref{eq-F}), we define the evaluation map
\begin{gather*}
    \mathcal{G}\colon\mathcal{H}_0(\mathbf{q}'',\mathbf{y}',\mathbf{y},\mathbf{q})\to\mathrm{H}_\kappa(\Sigma,\mathbf{q})\otimes_{\mathbb{Z}[\hbar]} \mathbb{Z}[[\hbar]],\\
    u\mapsto (-1)^{\natural(u)}\cdot c^{2\langle u,\star\rangle}\cdot\hbar^{\kappa-\chi(u)}\cdot [\gamma(u)].
\end{gather*}
Here we are writing $[\gamma]=\mathcal{P}(\gamma)$, where $\mathcal{P}(\gamma)$ is defined by (\ref{eq-quotient}).  As before, $\langle u,\star\rangle\coloneqq\langle [\pi_\Sigma(u)]',\star\rangle$, where ${[{\pi}_\Sigma(u)]'}$ is as in Section \ref{subsection-ev} with one difference: we replace all the $\alpha_i= \pi_\Sigma\circ u(\partial_i \dot F)$, {\em except those that have boundary condition $u(\partial_i\dot F)\subset \Sigma$,} by their piecewise $V$-geodesic replacements. The algebraic intersection number $\langle u,\star\rangle$ is well-defined since the boundary of ${[{\pi}_\Sigma(u)]'}$ is disjoint from $\star$ when $u\in\mathcal{H}_0(\mathbf{q}'',\mathbf{y}',\mathbf{y},\mathbf{q})$. 

\begin{proposition}
    \label{prop-algebra}
    The map $\mathcal{F}$ in (\ref{eq-F}) is a homomorphism of algebras.
\end{proposition}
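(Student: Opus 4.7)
The plan is to count, with appropriate signs and weights $c^{2\langle u,\star\rangle}\hbar^{\kappa-\chi(u)}$, the boundary points of the $1$-dimensional moduli space $\overline{\mathcal{H}}(\mathbf{q}'',\mathbf{y}',\mathbf{y},\mathbf{q})$ furnished by Lemma \ref{lemma-H-T-2}, and to read the required identity
$$\mathcal{F}(\mu^2(\mathbf{y}',\mathbf{y})) = \mathcal{F}(\mathbf{y}')\cdot\mathcal{F}(\mathbf{y})$$
in $\mathrm{H}_\kappa(\Sigma,\mathbf{q})\otimes_{\mathbb{Z}[\hbar]}\mathbb{Z}[[\hbar]]$ as the statement that the signed boundary count, taken modulo the skein and $c$-deformed homotopy relations of the target, vanishes.

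First I would identify the two ``generic'' codimension-one strata of $\partial\overline{\mathcal{H}}$ corresponding to the two degenerations of $T_2\in\mathcal{T}_2$ displayed in Figure \ref{fig-broken}. The left-hand degeneration $T_2\rightsquigarrow D_2\cup T_1$, obtained by pinching between the strip-like ends at $p_1$ and $p_2$, produces pairs $(v,w)\in\mathcal{M}^{\mathrm{ind}=0,\chi'}(\mathbf{y}',\mathbf{y},\mathbf{y}'')\times\mathcal{H}^{\mathrm{ind}=0,\chi''}(\mathbf{q}'',\mathbf{y}'',\mathbf{q})$ with $\chi'+\chi''=\chi(u)+\kappa$. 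Additivity of $\natural$, of the algebraic intersection number $\langle\cdot,\star\rangle$, and of $\kappa-\chi$ under gluing combines with \eqref{eq-mu-2-c} and \eqref{eq-E} to produce exactly $\mathcal{F}(\mu^2(\mathbf{y}',\mathbf{y}))$. The right-hand degeneration $T_2\rightsquigarrow T_1\cup T_1$, obtained by pinching the output arc $\partial_3 T_2$ through an intermediate $\kappa$-tuple $\mathbf{q}'$, produces pairs $(v',w')\in\mathcal{H}^{\mathrm{ind}=0,\chi'}(\mathbf{q}'',\mathbf{y}',\mathbf{q}')\times\mathcal{H}^{\mathrm{ind}=0,\chi''}(\mathbf{q}',\mathbf{y},\mathbf{q})$; under the identification of $H_0(\Omega(\mathrm{UConf}_\kappa(\Sigma\setminus\{\star\})))$ with the surface braid group algebra, the concatenation $\gamma(v')\cdot\gamma(w')$ represents the product $\mathcal{F}(\mathbf{y}')\cdot\mathcal{F}(\mathbf{y})$ in $\mathrm{H}_\kappa(\Sigma,\mathbf{q})$.

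The crucial additional phenomena to handle are the loci in $\mathcal{H}$ where $u$ leaves $\mathcal{H}_0$: (a) a branch point of the $\kappa$-fold cover $\pi_{T_2}\circ u$ meets the output arc $\partial_3 T_2$, making two strands of $\gamma(u)(t)$ collide at some $t\in[0,1]$, or (b) a strand of $\gamma(u)(t)$ sweeps across the marked point $\star$. Following Ekholm-Shende \cite{ekholm2021skeins}, nodal events of type (a) organize into local ``skein triples'': in a bidisk neighborhood of the node, the two smoothings into a positive and a negative crossing differ in Euler characteristic from the nodal curve by $1$, and the local contributions combine into the signed expression in \eqref{eq-skein} up to the identification $\hbar=s-s^{-1}$. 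This combination vanishes after applying $\mathcal{P}$, so type (a) contributes nothing to the boundary count in $\mathrm{H}_\kappa(\Sigma,\mathbf{q})$. A single crossing of $\star$ of type (b) changes $\langle u,\star\rangle$ by $\pm 1$ and modifies the based homotopy class of $\gamma(u)$ in $\mathrm{UConf}_\kappa(\Sigma\setminus\{\star\})$ by exactly a factor of $c^{\mp 2}$ according to \eqref{eq-chomotopy}, so that $\mathcal{G}(u)$ is continuous across (b) after passage to $\mathrm{H}_\kappa(\Sigma,\mathbf{q})$. Combining the four contributions yields $\mathcal{F}(\mu^2(\mathbf{y}',\mathbf{y}))-\mathcal{F}(\mathbf{y}')\cdot\mathcal{F}(\mathbf{y})=0$.

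The hard part will be the detailed local analysis at the nodal boundary of type (a): one must verify that the HDHF coherent orientation conventions produce the precise HOMFLY sign combination in \eqref{eq-skein} rather than some variant, and that the rescaling construction of $\{J_{T_m}\}$ outlined before Lemma \ref{lemma-H-T-2} is compatible with the local $Y$-shaped model of Ekholm-Shende. A secondary but nontrivial check is $\hbar$-adic convergence of both sides of the identity, which should follow from energy bounds and Gromov compactness fibered over $\mathcal{T}_2$ as in the proof of Lemma \ref{lemma-H-T-1}.
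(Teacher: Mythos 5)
Your proposal follows essentially the same approach as the paper: analyze the boundary of the $1$-dimensional moduli space $\overline{\mathcal{H}}^{\operatorname{ind}=1}(\mathbf{q}'',\mathbf{y}',\mathbf{y},\mathbf{q})$, read off $\mathcal{F}(\mu^2(\mathbf{y},\mathbf{y}'))$ and $\mathcal{F}(\mathbf{y})\mathcal{F}(\mathbf{y}')$ from the two broken-curve strata of Figure \ref{fig-broken}, absorb the nodal stratum into the HOMFLY skein relation via the Ekholm--Shende local model, and absorb $\star$-crossings via the marked-point/$c$-deformed homotopy relation. The paper implements the skein cancellation by excising trivalent neighborhoods $\mathcal{N}_{u_n}$ of the nodal curves and comparing boundary counts stratified by Euler characteristic (equation \ref{eq-algebra}), which is the same bookkeeping you describe, and you have correctly flagged the points (orientation conventions matching the skein signs, $\hbar$-adic convergence) that the paper treats by citation.
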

\begin{proof} 
It suffices to show that
\begin{equation*}
    \mathcal{F}(\mu^2(\mathbf{y},\mathbf{y}'))=\mathcal{F}(\mathbf{y})\mathcal{F}(\mathbf{y}')
\end{equation*}
for any $\mathbf{y},\mathbf{y}'\in CW(\sqcup_{i}T_{q_i}^*\Sigma)_c$. We analyze the boundary of the index 1 moduli space $\overline{\mathcal{H}}^\chi(\mathbf{q}'',\mathbf{y}',\mathbf{y},\mathbf{q})$ for varying $\chi$.

For each $u\in{\mathcal{H}}^\chi(\mathbf{q}'',\mathbf{y}',\mathbf{y},\mathbf{q})$, consider the path $\gamma(u)$ defined by (\ref{eq-path}) as a $\kappa$-tuple of paths in $\Sigma$.
A single generic $u$ belongs to ${\mathcal{H}_0}^\chi(\mathbf{q}'',\mathbf{y}',\mathbf{y},\mathbf{q})$ and $\mathcal{G}(u)$ is well-defined but for a generic $1$-parameter family $u_t\in {\mathcal{H}}^\chi(\mathbf{q}'',\mathbf{y}',\mathbf{y},\mathbf{q})$, $t\in[0,1]$, $\gamma(u_t)$ may intersect $\star$ at some $t\in(0,1)$. 
At any rate $\mathcal{G}(u_0)=\mathcal{G}(u_1)$ since we are taking values in $\mathrm{H}_\kappa(\Sigma,\mathbf{q})$ and the marked point relation compensates for each extra intersection with $\star$. Hence we may ignore the intersections with $\star$ in this proof.


Note that codimension-1 degenerations only occur in the $A_\infty$ base direction, i.e., the projection of curves to the $A_\infty$ base direction leads to codimension-1 degenerations of $D_m$. The schematic picture is shown in Figures \ref{fig-broken} and \ref{fig-nodal}.

There are three types of boundary degenerations:
\begin{enumerate}
    \item[(1)]  $\coprod_{\mathbf{y}'',\chi'+\chi''-\kappa=\chi} \mathcal{M}^{\operatorname{ind}=0,\chi'}(\mathbf{y}',\mathbf{y},\mathbf{y}'') \times\mathcal{H}^{\operatorname{ind}=0,\chi''}(\mathbf{q}',\mathbf{y}'',\mathbf{q})$;
    \item[(2)] $\coprod_{\mathbf{q}'',\chi'+\chi''-\kappa=\chi}\mathcal{H}^{\operatorname{ind}=0,\chi'}(\mathbf{q}'',\mathbf{y}',\mathbf{q}')\times\mathcal{H}^{\operatorname{ind}=0,\chi''}(\mathbf{q}',\mathbf{y},\mathbf{q})$;
    \item[(3)] the set $\partial_n \overline{\mathcal{H}}^{\operatorname{ind}=1, \chi}(\mathbf{q}'',\mathbf{y}',\mathbf{y},\mathbf{q})$ with a nodal degeneration along $\Sigma$.
\end{enumerate}
(1) is given on the left-hand side of Figure \ref{fig-broken} and contributes $\mathcal{F}(\mu^2(\mathbf{y},\mathbf{y}'))$. (2) is given on the right-hand side of Figure \ref{fig-broken} and contributes $\mathcal{F}(\mathbf{y})\mathcal{F}(\mathbf{y}')$.
A standard gluing argument shows that all contributions to $\mathcal{F}(\mu^2(\mathbf{y},\mathbf{y}'))$ and $\mathcal{F}(\mathbf{y})\mathcal{F}(\mathbf{y}')$ come from such broken degenerations. 

\begin{figure}[ht]
    \centering
    \includegraphics[width=12cm]{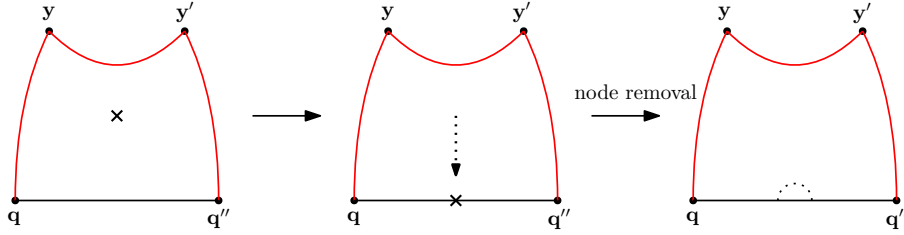}
    \caption{Nodal degeneration of $\mathcal{H}^{\operatorname{ind}=1,\chi}(\mathbf{q}'',\mathbf{y}',\mathbf{y},\mathbf{q})$ in the $T_2$ direction: a nodal point on $\Sigma$ (middle); removal of the nodal point (right).}
    \label{fig-nodal}
\end{figure}

We now discuss (3), which is given in Figure \ref{fig-nodal}. 
Let $u_s$, $s\in[0,1)$, be a generic 1-parameter family such that $\pi_{T_2}\circ u_s$ has a branch point (generically a double branch point) that limits to $\partial T_2$ as $s\to 1$.
We denote this limiting curve as $u_n\in\partial_{n}\overline{\mathcal{H}}^{\operatorname{ind}=1,\chi}(\mathbf{q}'',\mathbf{y}',\mathbf{y},\mathbf{q})$.
Moreover, the only component of $\partial T_2$ that a branch point can approach is $\partial_3 T_2$ (corresponding to the zero section $\Sigma$) since all other boundary arcs correspond to disjoint sets of Lagrangians where nodal degenerations cannot occur. 
By Gromov compactness, $\partial_{n}\overline{\mathcal{H}}^{\operatorname{ind}=1,\chi}(\mathbf{q}'',\mathbf{y}',\mathbf{y},\mathbf{q})$ is finite.
If we continue this family past the nodal curve $u_n$, then the nodal point is removed and $\chi$ increases by 1 as seen on the right-hand side of Figure \ref{fig-nodal}. 
Interpreted in another way, given $u_n\in \partial_n\overline{\mathcal{H}}^{\operatorname{ind}=1,\chi}(\mathbf{q}'',\mathbf{y}',\mathbf{y},\mathbf{q})$, there exists a 1-parameter family of curves $u'_t$, $t\in(-\delta,\delta)$, in $\mathcal{H}^{\operatorname{ind}=1,\chi+1}(\mathbf{q}'',\mathbf{y}',\mathbf{y},\mathbf{q})$ such that $u'_0$ is obtained from $u_n$ by removing the nodal point and the family $\gamma(u'_t)$ corresponds to a crossing of two paths in $[0,1]\times \Sigma$.
This is illustrated in Figure \ref{fig-skein-brane} as a skein relation on $[0,1]\times\Sigma$.
\begin{figure}[ht]
    \centering
    \includegraphics[width=10cm]{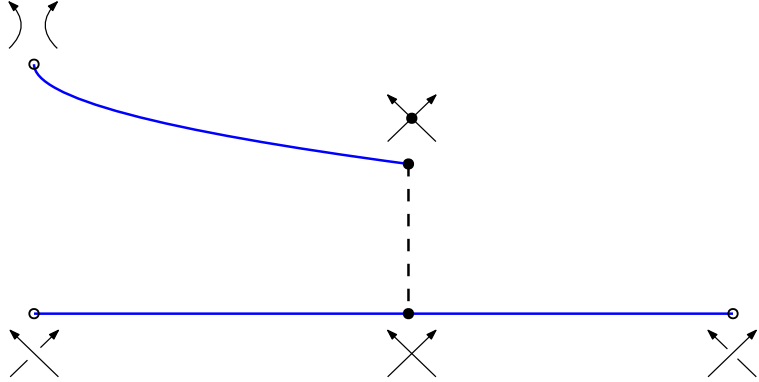}
    \caption{Representing a curve $u$ by its image $\gamma(u)$ on $[0,1]\times\Sigma$. The upper arc denotes the moduli space with a nodal degeneration. The lower arc denotes the companion moduli space with $\chi$ increased by 1, whose evaluation by $\mathcal{G}$ on $[0,1]\times\Sigma$ exhibits a crossing of two braid strands.}
    \label{fig-skein-brane}
\end{figure}

On the other hand, given a 1-parameter family $u'_t$, $t\in(-\delta,\delta)$, of curves in $\mathcal{H}^{\operatorname{ind}=1,\chi+1}(\mathbf{q}'',\mathbf{y}',\mathbf{y},\mathbf{q})$ such that $\gamma(u'_t)$ exhibits a single crossing of two strands (see the bottom blue line of Figure \ref{fig-skein-brane}), let $u_n$ be the nodal curve corresponding to $u'_0$ as in the middle of Figure \ref{fig-nodal}. 
Suppose the nodal point on the domain of $u_n$ is $p_n\in\dot{F}$. 
Since a neighborhood of $u_n(p_n)$ in the ambient symplectic manifold $T_2\times T^*\Sigma$ is diffeomorphic to $T^*\mathbb{R}\times T^*\mathbb{R}^2\approx \mathbb{R}^6$, we can construct a preglued curve $\tilde{u}_{1-\epsilon}$ using a cut-off version of the standard hyperbolic node model of \cite[Section 4.1.1]{ekholm2021skeins}, where $\epsilon\in[0,\epsilon_0)$ is the pregluing parameter for some small $\epsilon_0>0$ so that $u_n=\tilde{u}_1$. 
By a standard Newton iteration technique, there exists a unique 1-parameter family of holomorphic curves $u_{1-\epsilon}\in\overline{\mathcal{H}}^{\operatorname{ind}=1,\chi}(\mathbf{q}'',\mathbf{y}',\mathbf{y},\mathbf{q})$ for each $\epsilon\in[0,\epsilon_0)$, where $u_n=u_1$; see \cite[Lemma 4.16]{ekholm2021skeins} for details. 
Therefore, a small neighborhood of $u_n$ in $\overline{\mathcal{H}}^{\operatorname{ind}=1,\chi}(\mathbf{q}'',\mathbf{y}',\mathbf{y},\mathbf{q})$ is homeomorphic to $[0,\epsilon_0)$, which corresponds to the upper arc of Figure \ref{fig-skein-brane}. 


Summarizing the above discussion, $\sqcup_\chi \overline{\mathcal{H}}^{\operatorname{ind}=1,\chi}(\mathbf{q}'',\mathbf{y}',\mathbf{y},\mathbf{q})$ is a uni-trivalent graph with trivalent vertices indexed by $u_n\in\partial_{n}\mathcal{H}^{\operatorname{ind}=1,\chi}(\mathbf{q}'',\mathbf{y}',\mathbf{y},\mathbf{q})$. 
We denote the trivalent neighborhood of $u_n$ (the two blue arcs in Figure \ref{fig-skein-brane}) by $\mathcal{N}_{u_n}$, where $\partial\overline{\mathcal{N}}_{u_n}$ contains three curves $u_+$, $u_-$ and $u_0$ that correspond to $\includegraphics[width=0.5cm,valign=c]{poscross.eps}$, $\includegraphics[width=0.5cm,valign=c]{negcross.eps}$ and $\includegraphics[width=0.5cm,valign=c]{node.eps}$ in Figure \ref{fig-skein-brane}, respectively.
Now 
\begin{equation*}
    \overline{\mathcal{H}}^{\operatorname{ind}=1}(\mathbf{q}'',\mathbf{y}',\mathbf{y},\mathbf{q})\backslash\bigsqcup_{u_n\in\partial_n{\mathcal{H}}^{\operatorname{ind}=1}(\mathbf{q}'',\mathbf{y}',\mathbf{y},\mathbf{q})} \mathcal{N}_{u_n}
\end{equation*}
is a 1-dimensional manifold with boundary. 
By comparing the count of its boundary curves with different Euler characteristics, we have
\begin{align}
    &\mathcal{F}(\mathbf{y})\mathcal{F}(\mathbf{y}')-\mathcal{F}(\mu^2(\mathbf{y},\mathbf{y}'))\label{eq-algebra}\\
    &=1\cdot[\sum_{\chi(u_{+})=\kappa}\mathcal{G}(u_+)-\sum_{\chi(u_-)=\kappa}\mathcal{G}(u_-)]\nonumber\\
    &+\hbar\cdot[\sum_{\chi(u_{+})=\kappa-1}\mathcal{G}(u_+)-\sum_{\chi(u_-)=\kappa-1}\mathcal{G}(u_-)-\sum_{\chi(u_{0})=\kappa-1}\mathcal{G}(u_0)]\nonumber\\
    &+\hbar^2\cdot[\sum_{\chi(u_{+})=\kappa-2}\mathcal{G}(u_+)-\sum_{\chi(u_-)=\kappa-2}\mathcal{G}(u_-)-\sum_{\chi(u_{0})=\kappa-2}\mathcal{G}(u_0)]\nonumber\\
    &+\cdots\nonumber
\end{align}
The skein relation in Definition \ref{def-hecke} implies that 
\begin{equation*}
    \mathcal{G}(u_+)-\mathcal{G}(u_-)=\hbar\cdot \mathcal{G}(u_0) \in \mathrm{H}_\kappa(\Sigma,\mathbf{q})\otimes_{\mathbb{Z}[\hbar]} \mathbb{Z}[[\hbar]].
\end{equation*}
So the right-hand side of (\ref{eq-algebra}) is zero.
This completes the proof.
\end{proof}

To show $\mathcal{F}$ is an isomorphism, it suffices to show that $\mathcal{F}$ is a bijection. We use a perturbation argument and start with the case where $\hbar=0$:
\begin{lemma}
    \label{lemma-iso-h-0}
    The restriction of $\mathcal{F}$ to $\hbar=0$ is an isomorphism:
    \begin{equation*}
        \mathcal{F}|_{\hbar=0}\colon HW(\sqcup_{i}T_{q_i}^*\Sigma)_c|_{\hbar=0}\to \mathrm{H}_\kappa(\Sigma,\mathbf{q})|_{\hbar=0}.
    \end{equation*}
\end{lemma}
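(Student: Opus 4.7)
The plan is to exploit that at $\hbar = 0$ only holomorphic curves of maximal Euler characteristic $\chi = \kappa$ contribute to $\mu^2$ or to $\mathcal{E}$. Since the projection $\pi_{T_m}\circ u$ is a $\kappa$-fold branched cover of a disk by a surface of Euler characteristic $\kappa$, Riemann--Hurwitz forces the cover to be unbranched, so every contributing $u$ is a disjoint union of $\kappa$ triangular disks of exactly the type appearing in Abouzaid's $\kappa = 1$ evaluation map (\ref{eq-F-Ab}). The strategy is to use this disjoint-disk decomposition to reduce the statement to Theorem \ref{thm-ab}, combined with Lemma \ref{lemma-dim-Heckeh=0}.

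First, decompose a generator $\mathbf{y}$ of $CW(\sqcup_i T^*_{q_i}\Sigma)_c|_{\hbar=0}$ according to its underlying permutation $\sigma_\mathbf{y} \in S_\kappa$, defined by $y_j \in \phi^1_{H_V}(T^*_{q_j}\Sigma) \cap T^*_{q_{\sigma_\mathbf{y}(j)}}\Sigma$. For each fixed $\sigma$, the corresponding component of the complex is identified with $\bigotimes_j CW(\phi^1_{H_V}(T^*_{q_j}\Sigma), T^*_{q_{\sigma(j)}}\Sigma)$ as a $\mathbb{Z}[c^{\pm 1}]$-module. By the unbranched decomposition above, the restriction of $\mathcal{F}|_{\hbar = 0}$ to this component is, up to the symmetric-group label $\sigma$, the tensor product over $j$ of Abouzaid's single-fiber evaluation map $\tilde{\mathcal{F}}$ from (\ref{eq-F-Ab}). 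Moreover, $c^{2\langle u,\star\rangle}$ factors as $\prod_j c^{2\langle u_j,\star\rangle}$ over the disk components, matching the $c$-deformed homotopy relation (\ref{eq-chomotopy}) applied componentwise in each $\pi_1(\Sigma, q_j)_c$.

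Next, the algebra structure respects this decomposition: the $\mu^2$-product at $\hbar = 0$ is again a disjoint-disk count, so the underlying permutations multiply in $S_\kappa$ while the disk components concatenate in the corresponding $\pi_1(\Sigma, q_i)_c$. This matches the semidirect product structure from Lemma \ref{lemma-dim-Heckeh=0},
\[
\mathrm{H}_\kappa(\Sigma,\mathbf{q})|_{\hbar = 0} \cong \left(\bigotimes_i \mathbb{Z}[\pi_1(\Sigma, q_i)_c]\right) \rtimes S_\kappa.
\]
Applying Theorem \ref{thm-ab} (in the path-space form of Remark \ref{rmk-ab}) to each factor while keeping track of $c$ then identifies $\mathcal{F}|_{\hbar = 0}$ with the isomorphism of Lemma \ref{lemma-dim-Heckeh=0}.

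The main technical hurdle I anticipate is verifying rigorously that the triangular disks produced by the unbranched decomposition are precisely those counted by Abouzaid's chain-level evaluation map, including the signs $(-1)^{\natural(u)}$, and that the parametrization $\tau$ of $\partial_2 T_1$ used to define $\gamma(u)$ correctly translates each $\kappa$-tuple of chord endpoints into a $\kappa$-tuple of paths whose homotopy classes lie in $\prod_j \pi_1(\Sigma, q_j)_c$. Once this bookkeeping is in place, the lemma follows formally from the $\kappa = 1$ case and requires no further holomorphic curve analysis.
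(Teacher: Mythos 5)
Your proposal takes essentially the same approach as the paper's proof: both observe that at $\hbar=0$ only unbranched $\kappa$-fold covers (disjoint unions of $\kappa$ triangular disks) contribute, both reduce to Abouzaid's single-fiber evaluation map via the path-space version in Remark \ref{rmk-ab}, and both identify the target with the semidirect product $\bigl(\otimes_i\mathbb{Z}[\pi_1(\Sigma,q_i)_c]\bigr)\rtimes S_\kappa$ of Lemma \ref{lemma-dim-Heckeh=0}. The paper organizes the verification by first proving the $\kappa=1$ case and then checking $\mathcal{F}_0$ on algebra generators of $\bigl(\otimes_i HW(T^*_{q_i}\Sigma)_c\bigr)\rtimes S_\kappa$ — in particular carrying out an explicit Legendre-transform/action computation to pin down $\mathcal{F}_0(\mathbf{y}_i)=\sigma_i$ for the transposition generators — whereas you decompose the complex by the underlying permutation $\sigma_{\mathbf{y}}$ and apply the path-space evaluation componentwise; these are two equivalent bookkeeping strategies for the same argument.
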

\begin{proof}
For simplicity, we write $\mathcal{F}_0=\mathcal{F}|_{\hbar=0}$ throughout this proof.

We first prove the lemma for $\kappa=1$. 
In this case $\mathrm{H}_1(\Sigma,\mathbf{q})|_{\hbar=0}\cong\mathbb{Z}[\pi_1(\Sigma,q)_c]$.
Setting $c=1$, $\pi_1(\Sigma,q)_c$ is isomorphic to $\pi_1(\Sigma,q)$. 
In each homotopy class of $\pi_1(\Sigma,q)$, there is a unique generator ${y}\in HW(T_{q}^*\Sigma)$ whose Legendre transform $\mathcal{L}(y)$ represents this class. By \cite[Lemma 5.1]{abouzaid2012wrapped}, $\mathcal{F}_0|_{c=1}({y})=[\mathcal{L}(y)]$. Hence, $\mathcal{F}_0|_{c=1}$ is an isomorphism. 
Add the parameter $c$ and view ${y}\in HW(T_{q}^*\Sigma)_c$. We have $\mathcal{F}_0({y})=c^d[\mathcal{L}(y)]_c$ for some integer $d$, where $[\mathcal{L}(y)]_c \in \pi_1(\Sigma,q)_c$ is a lift of $[\mathcal{L}(y)] \in \pi_1(\Sigma,q)$. 
Hence $\mathcal{F}_0$ is an isomorphism. 
For later use, we denote the map $\mathcal{F}_0$ for $\kappa=1$ as
\begin{equation}
\label{eq-tildeFc}
    \tilde{\mathcal{F}}_c\colon HW(T_{q}^*\Sigma)_c \to \mathbb{Z}[\pi_1(\Sigma,q)_c],
\end{equation}
which is the version of (\ref{eq-F-Ab}) with the $c$-parameter.

We now prove the lemma for $\kappa\ge1$.
The $\mu^2$-operation of $HW(\sqcup_{i}T_{q_i}^*\Sigma)_c|_{\hbar=0}$ only counts curves with $\chi=\kappa$, i.e., where there are $\kappa$ trivial pseudoholomorphic disks. 
In this case one can easily compute that $HW(\sqcup_{i}T_{q_i}^*\Sigma)_c|_{\hbar=0}$ is isomorphic to $\left(\otimes_i HW(T_{q_i}^*\Sigma)_c\right)\rtimes S_{\kappa}$.
On the other hand, $\mathrm{H}_\kappa(\Sigma,\mathbf{q})_c|_{\hbar=0}$ degenerates to $\left(\otimes_i\,\mathbb{Z}[\pi_1(\Sigma,q_i)_c]\right) \rtimes S_{\kappa}$ by Lemma \ref{lemma-dim-Heckeh=0}. 
Here, both tensor products are over $\mathbb{Z}[c^{\pm1}]$.

Since $HW(\sqcup_{i}T_{q_i}^*\Sigma)_c|_{\hbar=0}$ is generated by $\otimes_i HW(T_{q_i}^*\Sigma)_c$ and $S_{\kappa}$ as an algebra, it suffices to show that 
\begin{enumerate} 
    \item $\mathcal{F}_0|_{\otimes_i HW(T_{q_i}^*\Sigma)_c}=\tilde{\mathcal{F}}_c^{\otimes \kappa}\colon \otimes_i HW(T_{q_i}^*\Sigma)_c \to \otimes_i\, \mathbb{Z}[\pi_1(\Sigma,q_i)_c]$, where the map $\tilde{\mathcal{F}}_c$ is in (\ref{eq-tildeFc});
    \item $\mathcal{F}_0|_{S_{\kappa}}=\mathrm{id}\colon S_{\kappa} \to S_{\kappa}$. 
\end{enumerate}
By Theorem \ref{thm-ab}, Remark \ref{rmk-ab} and \cite[Lemma 5.1]{abouzaid2012wrapped}, $\tilde{\mathcal{F}}_c$ maps the time-1 Hamiltonian flow to the homotopy class of its Legendre transform. 
The first equation follows since $\mathcal{F}_0|_{\otimes_i HW(T_{q_i}^*\Sigma)_c}$ maps $\kappa$ time-1 Hamiltonian flows to their Legendre transforms. 
    
The symmetric group $S_{\kappa}$ is generated by transpositions $\sigma_i=(i,i+1)$. 
Let $\mathbf{y}_i=\{y_{i1},\dots,y_{i\kappa}\} \in HW(\sqcup_{i}T_{q_i}^*\Sigma)_c|_{\hbar=0}$ be the corresponding generator in Floer homology, where $y_{ij} \in CF(\phi^1_{H_V}(T_{q_j}^*\Sigma), T_{q_{\sigma_i(j)}}^*\Sigma).$
Since $\hbar=0$, the map $\mathcal{F}_0(\mathbf{y}_i)$ counts curves with $\chi=\kappa$, that is, $\kappa$ holomorphic disks, where each disk limits to $y_{ij}$ at the positive puncture and evaluates on $\Sigma$ as a path from $q_j$ to $q_{\sigma_i(j)}$, for $j=1,\dots,\kappa$.

Note that each homotopy class of paths from $q_j$ to $q_{\sigma_i(j)}$ contains a unique $V$-perturbed geodesic, i.e., $\mathcal{L}(y_{ij})$ (see Definition \ref{def-leg-L}).
By \cite{abouzaid2012wrapped}, there is a unique holomorphic disk which limits to $y_{ij}$ at the positive puncture and evaluates on $\Sigma$ as a path $\gamma_j$ from $q_j$ to $q_{\sigma_i(j)}$ on the stable manifold of $\mathcal{L}(y_{ij})$.
However, since the action of $y_{ij}$ equals that of $\mathcal{L}(y_{ij})$, we see $\gamma_j=\mathcal{L}(y_{ij})$.

Therefore, the evaluation of the boundary of the unique $\kappa$-tuple of disks on the zero section $\Sigma\subset T^*\Sigma$ gives a based loop in $\mathrm{UConf}_{\kappa}(\Sigma,\mathbf{q})$, which consists of two short paths $\mathcal{L}(y_{i,i+1})$ and $\mathcal{L}(y_{i+1,i})$, and trivial paths from $q_j$ to $q_j$ for $j\neq i,i+1$. 
This loop gives a class in $\mathrm{H}_\kappa(\Sigma,\mathbf{q})_c|_{\hbar=0} \cong \left(\otimes_i\,\mathbb{Z}[\pi_1(\Sigma,q_i)_c]\right) \rtimes S_{\kappa}$ corresponding to $\sigma_i \in S_{\kappa}$.  
Hence, the second equation follows. 
\end{proof}

\vskip.1in
\begin{proof}[Proof of Theorem \ref{thm-main}]
$ $\vskip.1in\noindent
{\em Injectivity of $\mathcal{F}$.}
Suppose that there exists $\mathbf{a}\neq0$ such that $\mathcal{F}(\mathbf{a})=0$.
Let $\mathbf{a}=\sum_{i\geq0}\hbar^i\mathbf{a}_i$, where $\mathbf{a}_i \in CW(\sqcup_{i}T_{q_i}^*\Sigma)_c|_{\hbar=0}$. 
Since $\mathrm{H}_\kappa(\Sigma,\mathbf{q})$ has no $\hbar$-torsion, we may assume that $\mathbf{a}_0\neq0$. Setting $\hbar=0$, we have $\mathcal{F}(\mathbf{a}_0)=\mathcal{F}(\mathbf{a})=0$. 
Hence $\mathcal{F}|_{\hbar=0}(\mathbf{a}_0)=0$, which means $\mathbf{a}_0=0$ since $\mathcal{F}|_{\hbar=0}$ is an isomorphism. 
This leads to contradiction. 
Thus $\mathcal{F}$ is injective.

\vskip.1in\noindent
{\em Surjectivity of $\mathcal{F}$.} 
It suffices to show that any $\mathbf{b} \in \mathrm{H}_\kappa(\Sigma,\mathbf{q})$ is in $\mathrm{Im}\,\mathcal{F}$. Since $\mathcal{F}|_{\hbar=0}$ is an isomorphism, there exists $\mathbf{a}_0\in CW(\sqcup_{i}T_{q_i}^*\Sigma)_c|_{\hbar=0}$ such that $\mathcal{F}(\mathbf{a}_0)\equiv\mathbf{b}\,(\mathrm{mod\,\,\hbar})$. Let
\begin{equation*}
    \left.\mathbf{b}_1=\frac{(\mathbf{b}-\mathcal{F}(\mathbf{a}_0))}{\hbar}\right\vert_{\hbar=0}.
\end{equation*}
Then there exists $\mathbf{a}_1\in CW(\sqcup_{i}T_{q_i}^*\Sigma)_c|_{\hbar=0}$ such that $\mathcal{F}(\mathbf{a}_1)\equiv\mathbf{b}_1\,(\mathrm{mod\,\,\hbar})$. Repeating this procedure, 
we get $\mathcal{F}(\sum_{i\geq0}\mathbf{a}_i\hbar^i)=\mathbf{b}$. Hence $\mathcal{F}$ is surjective.
\end{proof}

\section{Surfaces with punctures}
\label{section-boundary}
In this section we show that Theorem \ref{thm-main} still holds for $\mathring{\Sigma}$, which is obtained from a closed oriented surface of genus $g\geq 0$ by removing a finite number ($>0$) of punctures. For simplicity we assume that $c=1$. 

In this case, the wrapped Floer homology of the cotangent fibers and the $\mathcal{F}$ map of (\ref{eq-F}) can be defined similarly but need modifications near the punctures. The main issue is the noncompactness of the moduli space of holomorphic curves: if the wrapped Lagrangians on $T^*\mathring{\Sigma}$ approach the punctures when projected to $\mathring{\Sigma}$, then a sequence of curves bounded by those wrapped Lagrangians projected to $\mathring{\Sigma}$ may also approach the punctures. 

To remedy this, we confine the wrapped Lagrangians and all involved holomorphic curves to stay over a compact subset of $\mathring{\Sigma}$. 
Our approach is a simple application of the partially wrapped Fukaya category by Sylvan \cite{sylvan2019partially} and its further development by Ganatra, Pardon and Shende \cite{ganatra2018sectorial,ganatra2020covariantly}. 
A similar approach in the context of sutured contact manifolds is due to Colin, Ghiggini, Honda and Hutchings \cite{colin2011sutures}.

Let $g$ be the standard flat metric on $\mathring{\Sigma}$ if $\mathring{\Sigma}$ is homeomorphic to $\mathbb{R}^2$ or $\mathbb{R}\times S^1$. 
Otherwise let $g$ be a complete finite-volume hyperbolic metric on $\mathring{\Sigma}$, i.e., all the punctures of $\mathring{\Sigma}$ correspond to cusps.

Suppose $\Sigma$ is a closed oriented surface and $z\in\Sigma$. We consider the once punctured surface $\mathring{\Sigma}=\Sigma\setminus\{z\}$ (the case of more than one puncture is similar). 

Let $\mathbf{q}=\{q_1,\dots,q_\kappa\}\subset{\mathring{\Sigma}}$ be a $\kappa$-tuple of points. 
Pick an end $\mathcal{N}\approx (-\infty,1)_s\times S^1_\theta$ near the puncture of $\mathring{\Sigma}$ so that $\{q_1,\dots,q_\kappa\}\subset{\mathring{\Sigma}}\setminus \mathcal{N}$. 
We have a trivialization of $T\mathcal{N}$ by $\{\frac{\partial}{\partial s},\frac{\partial}{\partial \theta}\}$. See Figure \ref{fig-cusp-end}. 
\begin{figure}[ht]
    \centering
    \includegraphics[width=9cm]{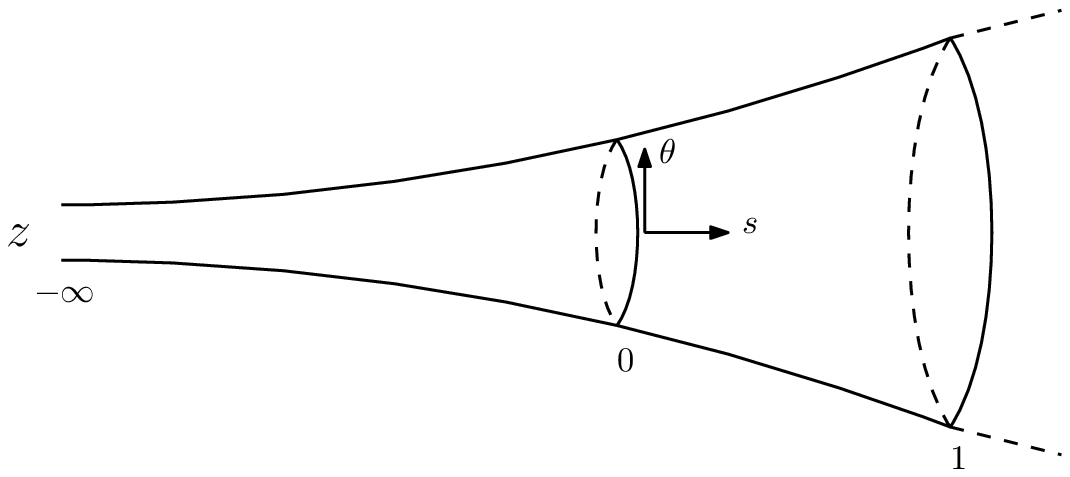}
    \caption{}
    \label{fig-cusp-end}
\end{figure}
Let $(p_s,p_\theta)$ be the dual coordinates to $(s,\theta)$ so that we have a trivialization of $T^*\mathcal{N}$ by $\{\frac{\partial}{\partial p_s},\frac{\partial}{\partial p_\theta}\}$. 
The canonical symplectic form $\omega$ on $\mathring{\Sigma}$ restricts to $\omega=ds\wedge dp_s+d\theta\wedge dp_\theta$ on $T^*\mathcal{N}$. 
We fix the trivial almost complex structure $J_\mathcal{N}$ on $T^*\mathcal{N}$ so that $J_\mathcal{N}(\frac{\partial}{\partial s})=\frac{\partial}{\partial p_s}$ and $J_\mathcal{N}(\frac{\partial}{\partial \theta})=\frac{\partial}{\partial p_{\theta}}$. 
Then there is a $(J_\mathcal{N},j)$-holomorphic map
\begin{equation*}
    \pi_s\colon T^*\mathcal{N}\to\mathbb{C}_{\mathrm{Re}<1},\quad (s,\theta,p_s,p_\theta)\mapsto s+j p_s,
\end{equation*}
where $j$ is the standard complex structure on $\mathbb{C}_{\mathrm{Re}<1}$.

Fix a diffeomorphism $f\colon(0,1)_s\to(-\infty,1)_s$ such that $f'>0$, $f''\leq 0$, and $f(s)=s$ for $s \geq 1/2$.  
It induces the diffeomorphism $\mathcal{N}\cap\{s>0\}\stackrel\sim\to \mathcal{N}$, $(s,\theta)\mapsto (f(s),\theta)$.
Extending by the identity, we obtain a diffeomorphism
\begin{equation*}
    \tilde{f}\colon\mathring{\Sigma}_0\coloneqq\mathring{\Sigma}\setminus \{s\leq0\}\to \mathring{\Sigma}.
\end{equation*}
The pullback metric $\tilde{f}^*g$ on $\mathring{\Sigma}_0$ induces a norm $|\cdot|_f$ on $T^*{\mathring{\Sigma}_0}$. Choose a time-dependent Hamiltonian $H_{V,f}\colon[0,1]\times T^*{\mathring{\Sigma}_0}\to\mathbb{R}$ with $|p|_f$ instead of $|p|$ in Equation (\ref{eq-H}),
where $V$ has compact support in $\mathring{\Sigma}\setminus \mathcal{N}$ and has small $W^{1,2}$-norm on $[0,1]\times \mathring{\Sigma}$; $X_{H_{V,f}}$ and $\phi^t_{H_{V,f}}$ are as before with respect to $H_{V,f}$.
Note that the wrapped Lagrangians $\phi^t_{H_V}(\sqcup_{i}T_{q_i}^*{\mathring{\Sigma}})\subset\mathring{\Sigma}_0\subset\mathring{\Sigma}$ for all $t\geq 0$, and hence cannot cross $\{s=0\}$. 

We follow the notation of Section \ref{subsection-wrap}. As in Definition \ref{def-CW}, we define:
\begin{definition}
    \label{def-CW-boundary}
    The wrapped Heegaard Floer cochain complex of $CW(\sqcup_{i}T_{q_i}^*{\mathring{\Sigma}})$ is $CF(\phi^1_{H_V}(\sqcup_{i}T_{q_i}^*{\mathring{\Sigma}}),\sqcup_{i}T_{q_i}^*{\mathring{\Sigma}})$.
\end{definition}

Choose sufficiently generic consistent collection of almost complex structures $\{J_{D_m}\}$ satisfying (J1)--(J3) as before and  
apply the rescaling argument of Section \ref{subsection-wrap} to $\{J_{D_m}\}$ to get $\{J''_{D_m}\}$. We also define $\tilde L''_j(D_m)$ as before.

Given $\mathbf{y}_1,\dots,\mathbf{y}_m\in CW(\sqcup_{i}T_{q_i}^*{\mathring{\Sigma}})$, let $\mathcal{M}(\mathbf{y}_1,\dots,\mathbf{y}_m,\mathbf{y}_0)$ be the moduli space of maps
\begin{equation*}
    u\colon(\dot F,j)\to(D_m\times T^*\mathring{\Sigma},J_{D_m}),
\end{equation*}
where $(F,j)$ is a compact Riemann surface with boundary and $u$ satisfies the conditions similar to (\ref{floer-condition}).

It is easy to check that all the conclusions in Section \ref{subsection-wrap} still hold except for Lemma \ref{lemma-compactness} and the $A_\infty$-relation. However, we claim that the standard proof of $A_\infty$-relation (see \cite[Proposition 4.0.3]{colin2020applications}) works by showing that:

\begin{lemma}
    \label{lemma-compactness-boundary}
Fix a sufficiently generic consistent choice of almost complex structures as above.
    Let $d=0\,\,\text{or}\,\,1$. Given generators $\mathbf{y}_1,\dots,\mathbf{y}_m\in CW(\sqcup_{i}T_{q_i}^*{\mathring{\Sigma}})$, $\mathcal{M}^{\mathrm{ind}=d,\chi}(\mathbf{y}_1,\dots,\mathbf{y}_m,\mathbf{y}_0)$ is empty for all but finitely many $\mathbf{y}_0$. When it is nonempty, $\mathcal{M}^{\mathrm{ind}=d,\chi}(\mathbf{y}_1,\dots,\mathbf{y}_m,\mathbf{y}_0)$ (and $\mathcal{M}^{\operatorname{ind}=d,\chi}(\mathbf{y}_1,\mathbf{y}_0)/\mathbb{R}$ if $m=1$) admits a compactification for each Euler characteristic $\chi$.
\end{lemma}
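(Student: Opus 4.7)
The plan is to adapt the standard wrapped Floer compactness arguments \cite[Appendix B]{abouzaid2010geometric}, \cite[Section 7]{abouzaid2010open} to the present setting, with one new ingredient required to handle the puncture of $\mathring{\Sigma}$: holomorphic curves must be prevented from escaping into the cusp end $\mathcal{N}$.

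First I would verify that the usual integrated maximum principle confines curves in the cotangent fiber direction. Since the Hamiltonian $H_{V,f}(t,q,p)=\tfrac{1}{2}|p|_{f}^{2}+V(t,q)$ is quadratic at infinity in the rescaled norm $|\cdot|_{f}$ and the consistent collection $\{J_{D_m}\}$ is cylindrical outside a compact set in $T^*\mathring{\Sigma}_0$, the standard arguments bound $|p|_f$ along any curve $u\in\mathcal{M}^{\mathrm{ind}=d,\chi}(\mathbf{y}_1,\dots,\mathbf{y}_m,\mathbf{y}_0)$ in terms of the actions of the asymptotic chords. Together with the $A_\infty$ energy bound $E(u)\leq\mathcal{A}(\mathbf{y}_0)-\sum_i\mathcal{A}(\mathbf{y}_i)$, this yields the finiteness of nonempty outputs $\mathbf{y}_0$, since the wrapped fibers have only finitely many mutual intersection points below any fixed action level.

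The main obstacle is the second step: confining the image of $u$ in the base direction away from the cusp. I would augment conditions (J1)--(J3) with the requirement that the fiberwise restriction of $J_{D_m}$ agree with $J_{\mathcal{N}}$ on $T^*\mathcal{N}$; this is compatible with (J1)--(J3) and with the Section~\ref{subsection-wrap} rescaling construction, because the Liouville flow $\psi^\rho$ preserves both the splitting of $T^*\mathcal{N}$ and the real part of $\pi_s$. On the open set $U:=u^{-1}(D_m\times T^*\mathcal{N})\subset\dot{F}$, the composition
\[
h:=\pi_s\circ\pi_{T^*\mathring{\Sigma}}\circ u\colon U\to\mathbb{C}_{\mathrm{Re}<1}
\]
is then holomorphic, and hence $\mathrm{Re}(h)$ is a harmonic function on $\mathrm{int}(U)$. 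I would bound $\mathrm{Re}(h)$ from below on $\partial U$ by examining three types of boundary components: on $\partial U\cap\partial\dot{F}$, the curve $u$ lies in a wrapped Lagrangian $\tilde L''_{ij}(D_m)$, whose projection to $\mathring{\Sigma}$ is contained in $\mathring{\Sigma}_0\subset\{s>0\}$ by the choice of $H_{V,f}$, giving $\mathrm{Re}(h)>0$; on $\partial U\cap u^{-1}(\{s=1\})$ one has $\mathrm{Re}(h)=1$; and at any strip-like end contained in $U$ the asymptotic chord $\mathbf{y}_i$ is an intersection of wrapped fibers in the compact set $\mathring{\Sigma}\setminus\mathcal{N}$. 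The minimum principle for harmonic functions then forces $\mathrm{Re}(h)\geq 0$ throughout $U$, confining $u$ to the relatively compact region $T^*\mathring{\Sigma}_0$ in the base direction.

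Once the image of $u$ is bounded in both the fiber and base directions, standard Gromov compactness, applied as in the closed case of Lemma~\ref{lemma-compactness}, produces the desired compactification $\overline{\mathcal{M}}^{\mathrm{ind}=d,\chi}(\mathbf{y}_1,\dots,\mathbf{y}_m,\mathbf{y}_0)$, with boundary given by the expected broken configurations arising from degenerations of $D_m\in\overline{\mathcal{A}}_m$ (and the $\mathbb{R}$-quotient when $m=1$). I expect the most delicate point will be propagating the new compatibility condition $J_{D_m}|_{T^*\mathcal{N}}=J_{\mathcal{N}}$ through the rescaling step that builds $J''_{D_m}$ and the corresponding boundary conditions $\tilde L''_{ij}(D_m)$ for the $A_\infty$ operations; this should follow from the fact that $\psi^{1/m}$ acts trivially on the base coordinates $(s,\theta)$ of $\mathcal{N}$ and only rescales the fiber, so that both the holomorphic projection $\pi_s$ and the maximum-principle hypotheses above are preserved.
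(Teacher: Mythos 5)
Your proposal is correct and follows essentially the same strategy as the paper: the authors also confine curves to $T^*\mathring{\Sigma}_0$ by using the holomorphicity of $\pi_s\circ u$ on the preimage of $T^*\mathcal{N}$ together with a minimum-principle argument (phrased via the open mapping theorem for the set $K := (\pi_s\circ u)(u^{-1}(\pi_s^{-1}(\mathbb{C}_{\mathrm{Re}<0})))$ rather than via harmonicity of $\mathrm{Re}(\pi_s\circ u)$, but these are the same idea), and then invoke the arguments of Lemma~\ref{lemma-compactness} for the remaining finiteness and compactness claims. One minor caveat: after the rescaling step, $\psi^{1/m}_*J_\mathcal{N}\neq J_\mathcal{N}$ and $\pi_s$ is no longer literally $(\psi^{1/m}_*J_\mathcal{N},j)$-holomorphic; however $\pi_s\circ\psi^m$ is, and its real part is still $s$, so the argument you sketch goes through after this small adjustment.
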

\begin{proof}
    We follow \cite[Lemma 2.41]{ganatra2020covariantly}. Let $\pi\colon T^*\mathring{\Sigma}\to\mathring{\Sigma}$ be the projection. For each $u\in\mathcal{M}(\mathbf{y}_1,\dots,\mathbf{y}_m,\mathbf{y}_0)$, we show that $\pi\circ u(\dot{F})\cap \{s<0\}=\varnothing$.
    
    Consider the holomorphic map 
    \begin{equation*}
        \pi_s\circ u\colon u^{-1}(\pi_s^{-1}(\mathbb{C}_{\mathrm{Re}<0}))\to\mathbb{C}_{\mathrm{Re}<0}.
    \end{equation*}
    By definition, $\pi\circ u(\partial\dot{F})\cap\{s\leq 0\}=\varnothing$, hence $u^{-1}(\pi_s^{-1}(\mathbb{C}_{\mathrm{Re}<0}))\subset F\setminus\partial F$ is an open subset of the interior of $F$. Therefore, $K\coloneqq(\pi_s\circ u)( u^{-1}(\pi_s^{-1}(\mathbb{C}_{\mathrm{Re}<0})))$ is an open subset of $\mathbb{C}_{\mathrm{Re}<0}$ by the open mapping theorem.
    
    On the other hand, $u^{-1}(\pi_s^{-1}(\mathbb{C}_{\mathrm{Re}\leq 0}))\subset F\setminus\partial F$ is a compact subset of the interior of $F$. Therefore, the function 
    \begin{equation*}
        \mathrm{Re}\circ\pi_s\circ u\colon u^{-1}(\pi_s^{-1}(\mathbb{C}_{\mathrm{Re}\leq 0}))\to\mathbb{R}
    \end{equation*}
    attains its minimum on $u^{-1}(\pi_s^{-1}(\mathbb{C}_{\mathrm{Re}\leq 0}))$. Since $K$ is open, this is only possible if $K=\varnothing$. Hence $u^{-1}(\pi_s^{-1}(\mathbb{C}_{\mathrm{Re}<0}))=\varnothing$ and then $\pi\circ u(\dot{F})\cap \{s<0\}=\varnothing$.
    
    We have shown how to prevent curves from crossing the vertical boundary of $T^*\mathring{\Sigma}_0$. The remaining proof is the same as that of Lemma \ref{lemma-compactness}.
\end{proof}

Hence $CW(\sqcup_{i}T_{q_i}^*{\mathring{\Sigma}})$ is a well-defined ordinary algebra supported in degree 0. 
~\\

To modify Section \ref{section-loop}, we consider paths in $\mathring{\Sigma}_0$ instead of ${\Sigma}$. We now use the metric $\tilde{f}^*g$. 
The Legendre transform $L_{f,v}$ is with $|v|_f$ instead of $|v|$ in (\ref{eq-lagrangian}), where $|\cdot|_f$ is the norm on $T\mathring{\Sigma}_0$ induced by $\tilde f^*g$.
Given $q_0,q_1\in \mathring{\Sigma}_0\setminus \mathcal{N}$, the Lagrangian action functional $\mathcal{A}_{V,f}$ is defined as in (\ref{eq-lagrangian-action}) with $L_V$ replaced by $L_{V,f}$.
By the choice of the metric $\tilde{f}^*g$ it is easy to see that
\begin{enumerate}
    \item no $V$-perturbed geodesics can exit $\mathring{\Sigma}_0$;
    \item the induced negative gradient flow of $\mathcal{A}_V$ always stays inside a compact region of $\mathring{\Sigma}_0$.
\end{enumerate}
Since $\tilde{f}\colon(\mathring{\Sigma}_0,\tilde{f}^*g)\to(\mathring{\Sigma},g)$ is an isometry, we have:
\begin{lemma}
   The Morse homology $HM_*(\Omega^{1,2}(\mathring{\Sigma}_0,q_0,q_1))$ induced by the metric $\tilde{f}^*g$ is well-defined and is isomorphic to $HM_*(\Omega^{1,2}(\mathring{\Sigma},q_0,q_1))$ induced by the metric $g$.  
\end{lemma}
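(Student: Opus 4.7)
The plan is to derive both conclusions from a single ingredient: the map $\tilde{f}\colon (\mathring{\Sigma}_0,\tilde{f}^*g)\to (\mathring{\Sigma},g)$ is an isometry that equals the identity on $\mathring{\Sigma}\setminus \mathcal{N}$ (because $f(s)=s$ for $s\geq 1/2$, and $V$ is supported away from $\mathcal{N}$), and hence in particular fixes $q_0, q_1$.

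First I would establish the well-definedness of $HM_*(\Omega^{1,2}(\mathring{\Sigma}_0,q_0,q_1))$. The Abbondandolo--Schwarz machinery requires (a) a generic choice of $V$ making the critical points of $\mathcal{A}_{V,f}$ isolated and nondegenerate, and (b) a compactification, by broken trajectories, of the moduli spaces of negative $L^2$-gradient trajectories of $\mathcal{A}_{V,f}$. Part (a) is unchanged from the compact case. For (b), the only danger in the noncompact setting is that a sequence of trajectories, or a trajectory itself, could escape out the cusp; this is ruled out by the two confinement statements recorded immediately before the lemma, which guarantee that both critical points and negative gradient trajectories of $\mathcal{A}_{V,f}$ stay in a fixed compact subset of $\mathring{\Sigma}_0\setminus\mathcal{N}$. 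With this confinement the arguments of \cite{abbondandolo2006floer} carry over verbatim, treating that compact region as a closed base manifold.

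Next, for the isomorphism, composition with $\tilde{f}$ gives a diffeomorphism of Hilbert manifolds
\[
\tilde{f}_*\colon \Omega^{1,2}(\mathring{\Sigma}_0,q_0,q_1)\;\xrightarrow{\sim}\;\Omega^{1,2}(\mathring{\Sigma},q_0,q_1),\qquad \gamma\mapsto \tilde{f}\circ\gamma.
\]
Because $V\circ\tilde{f}=V$ (the support of $V$ lies where $\tilde{f}=\mathrm{id}$) and $\tilde{f}$ is an isometry, one computes $L_{V,f}(t,q,v)=L_V(t,\tilde{f}(q),d\tilde{f}(v))$, and hence $\mathcal{A}_V\circ\tilde{f}_* = \mathcal{A}_{V,f}$. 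Consequently, critical points correspond bijectively, and, choosing the background $W^{1,2}$-type metric on $\Omega^{1,2}(\mathring{\Sigma},q_0,q_1)$ to be the one induced by $g$ and the corresponding one on $\Omega^{1,2}(\mathring{\Sigma}_0,q_0,q_1)$ to be induced by $\tilde{f}^*g$, the moduli spaces of connecting gradient trajectories correspond as well. This produces a chain-level isomorphism of Morse complexes, which descends to the claimed isomorphism on homology.

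The main obstacle, such as it is, lies in step (b): verifying on the noncomplete manifold $(\mathring{\Sigma}_0,\tilde{f}^*g)$ that Palais--Smale-type compactness really holds for $\mathcal{A}_{V,f}$. This is the classical analogue of the barrier/open-mapping confinement used for holomorphic curves in Lemma \ref{lemma-compactness-boundary}: the choice of $f$ with $f'>0$, $f''\leq 0$ bends the metric so that the negative gradient flow cannot cross $\{s=0\}$. Once the two confinement statements (1) and (2) are in place, the rest of the argument is essentially formal transport along the isometry $\tilde{f}$.
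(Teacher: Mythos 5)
Your proof is correct and follows essentially the same route as the paper, which also deduces the lemma directly from the fact that $\tilde{f}\colon(\mathring{\Sigma}_0,\tilde{f}^*g)\to(\mathring{\Sigma},g)$ is an isometry together with the two confinement observations recorded just before the statement. You have simply made explicit the transport $\mathcal{A}_V\circ\tilde{f}_*=\mathcal{A}_{V,f}$ of the action functional and of negative gradient trajectories under $\tilde{f}_*$, and the resulting chain-level identification of Morse complexes, all of which the paper leaves implicit.
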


It is easy to verify that all the definitions and conclusions in Section \ref{section-Hecke} hold for $\mathring{\Sigma}_0$. Hence the Hecke algebra $\mathrm{H}_{\kappa}(\mathring{\Sigma}_0,\mathbf{q})$ is well-defined.
~\\

Next we consider the modification of the map $\mathcal{F}$ defined by (\ref{eq-F}). We use the notation from Section \ref{section-f} with the following modifications:
\begin{enumerate}
    \item $H_V$ is replaced by $H_{V,f}$;
    \item choose a sufficiently generic consistent collection $T_{m-1}\mapsto J_{T_{m-1}}$ of compatible almost complex structures on $T_{m-1}\times T^*\mathring{\Sigma}$ for $T_{m-1}\in \mathcal{T}_{m-1}$ and all $m\geq2$, where $J_{T^*\mathring{\Sigma}}$ coincides with $j_m\times J_\mathcal{N}$ on $T_{m-1}\times T^*\mathcal{N}$.
\end{enumerate}
All of Section \ref{subsection-ev} carries over with the exception of Lemma \ref{lemma-H-T-1}. Recall that we denote the set of intersection points between $\sqcup_{i}T_{q_i}^*\mathring{\Sigma}$ (resp.\ $\phi^1_{H_V}(\sqcup_{i}T_{q_i}^*\mathring{\Sigma})$) and $\mathring{\Sigma}$ by $\mathbf{q}$ (resp.\ $\mathbf{q}'$). Let $\mathbf{y}\in CF(\phi^1_{H_V}(\sqcup_{i}T_{q_i}^*\mathring{\Sigma}),\sqcup_{i}T_{q_i}^*\mathring{\Sigma})$. We show that:

\begin{lemma}
    \label{lemma-compactness-boundary-H}
    The moduli space $\mathcal{H}^\chi(\mathbf{q}',\mathbf{y},\mathbf{q})$ admits a compactification for each Euler characteristic $\chi$.
\end{lemma}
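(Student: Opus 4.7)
The plan is to confine every $u\in\mathcal{H}^\chi(\mathbf{q}',\mathbf{y},\mathbf{q})$ to a compact region of $T^*\mathring{\Sigma}$, after which standard Gromov compactness produces the desired compactification by broken and nodal curves, exactly as in the closed case.  The genuinely new issue compared to Lemma~\ref{lemma-compactness-boundary} is that one of the Lagrangian boundary conditions of $\mathcal{H}^\chi$ is the zero section $\mathring{\Sigma}$, and the zero section extends all the way into the puncture end $\mathcal{N}$; so a priori $u$ could have boundary arcs that dive into $T^*\mathcal{N}\cap\{s\leq 0\}$ along the zero section.  I will rule this out by combining the open-mapping argument of Lemma~\ref{lemma-compactness-boundary} with a Schwarz reflection across the zero-section arcs.

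The first step is to set $U = u^{-1}\bigl(\pi_s^{-1}(\mathbb{C}_{\mathrm{Re}<0})\bigr)\subset\dot F$ and to check that $\overline U$ is compact in $\dot F$: the asymptotic data $\mathbf{q},\mathbf{q}',\mathbf{y}$ all lie over $\mathring{\Sigma}\setminus\mathcal{N}$, so $U$ stays away from every puncture of $\dot F$.  By the choice of $H_{V,f}$, all wrapped cotangent-fiber boundary arcs of $\dot F$ project into $\mathring{\Sigma}_0\setminus\mathcal{N}\subset\{s>0\}$ and hence are disjoint from $U$.  The only part of $\partial\dot F$ that can meet $U$ is along zero-section arcs, on which $p_s=p_\theta=0$, so $\pi_s\circ u$ takes values in $\mathbb{R}_{<0}\subset\mathbb{C}$ there.

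The key step is the Schwarz reflection.  Double $\overline U$ along its intersection with $\partial\dot F$ to form a compact surface $\widetilde U$; since $\pi_s\circ u$ is real on the arcs being identified, it extends holomorphically to $\widetilde U$, and the boundary $\partial\widetilde U$ consists of the arcs in the interior of $\dot F$ where $\mathrm{Re}(\pi_s\circ u)=0$.  Because the interior of $\dot F$ has no compact connected component, every connected component of $\widetilde U$ that meets $U$ has non-empty boundary, and the maximum/minimum principle applied to the harmonic function $\mathrm{Re}(\pi_s\circ u)$ forces it to vanish identically on that component, contradicting $\mathrm{Re}(\pi_s\circ u)<0$ on $U$.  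Hence $U=\varnothing$, i.e., $\pi_{\mathring{\Sigma}}\circ u(\dot F)\subset\mathring{\Sigma}_0$ for every $u$.

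Once every curve is confined to $T^*\mathring{\Sigma}_0$, the argument concludes in the same way as in the closed case: the energy bound for $\mathcal{H}^\chi$ yields Gromov compactness, producing a compactification of $\mathcal{H}^\chi(\mathbf{q}',\mathbf{y},\mathbf{q})$ by broken and nodal curves of Euler characteristic $\chi$, parallel to Lemma~\ref{lemma-H-T-2}.  I expect the main obstacle to be the Schwarz-reflection step, specifically the need to control the geometry of $\overline U$ at the corners where zero-section arcs meet cotangent-fiber arcs in $\partial\dot F$ and where the level curve $\{\mathrm{Re}(\pi_s\circ u)=0\}$ exits into $\partial\dot F$; this should follow from standard boundary regularity for pseudoholomorphic curves with transverse totally-real boundary conditions, so no new analytic input is required.
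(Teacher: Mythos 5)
Your proof is correct and takes a genuinely different route from the paper's. The paper argues on the image side: with $P := (\pi_s\circ u)\bigl(u^{-1}(\pi_s^{-1}(\mathbb{C}_{\mathrm{Re}\leq 0}))\bigr)$, it uses the open mapping theorem to show that $P\cap\{\mathrm{Re}<0,\,\mathrm{Im}\neq0\}$ is open in $\mathbb{C}$ and relatively closed in that locus, which is a union of unbounded components, so compactness of $P$ forces $P\subset\{\mathrm{Re}=0\}\cup\{\mathrm{Im}=0\}$; a further open-mapping observation at interior points of $\dot F$ then rules out $\mathrm{Re}<0$ with $\mathrm{Im}=0$, leaving $P\subset\{\mathrm{Re}=0\}$. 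You instead argue on the domain side, Schwarz-reflecting $\overline U$ across the zero-section boundary arcs (legitimate since $p_s=0$ there, so $\pi_s\circ u$ is real) and applying the maximum principle / Dirichlet uniqueness to the nonpositive harmonic function $\mathrm{Re}(\pi_s\circ u)$, which vanishes on $\partial\widetilde U$. Both packages rest on the same underlying fact (holomorphicity of $\pi_s\circ u$ plus the reality of its boundary values on the zero section) and then conclude by Gromov compactness as in the closed case, but the reflection argument is arguably the more self-contained one for a mixed Lagrangian boundary condition where one piece (the zero section) genuinely extends into the region being excluded. One small imprecision in your write-up: the only corners of $\overline U$ requiring care are where the interior level set $\{\mathrm{Re}(\pi_s\circ u)=0\}$ meets a zero-section arc; $\overline U$ is uniformly bounded away from all cotangent-fiber boundary arcs (those project to $\{s>0\}$), so the junctions between zero-section arcs and cotangent-fiber arcs never lie in $\overline U$ and play no role.
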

\begin{proof}
    This is similar to the proof of Lemma \ref{lemma-compactness-boundary}. Let $\pi:T^*\mathring{\Sigma}\to\mathring{\Sigma}$ be the projection. For each $u\in\mathcal{H}^\chi(\mathbf{q}',\mathbf{y},\mathbf{q})$, consider the holomorphic map 
    \begin{equation*}
        \pi_s\circ u\colon u^{-1}(\pi_s^{-1}(\mathbb{C}_{\mathrm{Re}<0}))\to\mathbb{C}_{\mathrm{Re}<0}.
    \end{equation*}
    By definition, $u(\partial\dot{F})\cap T^*\mathcal{N}|_{s<0}$ is a subset of the zero section $\mathcal{N}|_{s<0}$. Since $u^{-1}(\pi_s^{-1}(\mathbb{C}_{\mathrm{Re}\leq 0}))\subset F\setminus\partial F$ is an compact closed subset of $F$, its image $P\coloneqq(\pi_s\circ u)(u^{-1}(\pi_s^{-1}(\mathbb{C}_{\mathrm{Re}\leq 0})))\subset\mathbb{C}$ is also compact. 
    
    Note that $(\pi_s\circ u)(u^{-1}(\pi_s^{-1}(\mathbb{C}_{\mathrm{Re}<0,\,\mathrm{Im}\neq0})))\subset\mathbb{C}$ is open by the open mapping theorem. As a result, $\partial P\cap\mathbb{C}_{\mathrm{Re}<0,\,\mathrm{Im}\neq0}=\varnothing$, hence $P\subset\mathbb{C}_{\mathrm{Re}=0}\cup \mathbb{C}_{\mathrm{Im}=0}$. This implies $u^{-1}(\pi_s^{-1}(\mathbb{C}_{\mathrm{Re}\leq 0}))\subset \partial F$, which is only possible if $P\subset{\mathbb{C}_{\mathrm{Re}=0}}$. Therefore we conclude that $\pi\circ u(\dot{F})\cap \{s<0\}=\varnothing$.
    
    We have shown that curves in $\mathcal{H}^\chi(\mathbf{q}',\mathbf{y},\mathbf{q})$ cannot cross the vertical boundary of $T^*\mathring{\Sigma}_0$. The remaining proof is the same as that of Lemma \ref{lemma-H-T-1}.
\end{proof}

The parallel modification of Lemma \ref{lemma-H-T-2} and its proof are similar. Also note that Proposition \ref{prop-algebra} still holds. Therefore
\begin{equation*}
    \mathcal{F}\colon CW(\sqcup_{i}T_{q_i}^*\mathring{\Sigma}) \to  \mathrm{H}_\kappa(\mathring{\Sigma}_0,\mathbf{q})|_{c=1}\otimes_{\mathbb{Z}[\hbar]} \mathbb{Z}[[\hbar]]\cong\mathrm{H}_\kappa(\mathring{\Sigma},\mathbf{q})|_{c=1}\otimes_{\mathbb{Z}[\hbar]} \mathbb{Z}[[\hbar]]
\end{equation*}
is well-defined.

To modify the proof of Theorem \ref{thm-main}, it suffices to modify the proof of Lemma \ref{lemma-iso-h-0} when $\kappa=1$. Note that $\hbar=0$ is automatically satisfied in this case. 
\begin{lemma}
    \label{lemma-iso-h-0-boundary}
    When $\kappa=1$, the map $\mathcal{F}$ is an isomorphism:
    \begin{equation*}
        \mathcal{F}\colon HW(T_{q}^*\mathring{\Sigma})\to \mathrm{H}_1(\mathring{\Sigma}_0,q)|_{c=1}\cong \mathrm{H}_1(\mathring{\Sigma},q)|_{c=1}.
    \end{equation*}
\end{lemma}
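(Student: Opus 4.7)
The plan is to follow the $\kappa=1$ part of the proof of Lemma \ref{lemma-iso-h-0} and transplant it to the confined setting of this section. The remark ``$\hbar=0$ is automatically satisfied'' reflects two facts particular to $\kappa=1$: every $1$-fold branched cover of the base disk $T_{m-1}$ is simply a disk itself, so only contributions with $\chi(F)=\kappa=1$ appear and every $\hbar^{\kappa-\chi}$ factor equals $1$; and the skein relation of Definition \ref{def-hecke} involves two crossing strands, so it is vacuous on $\mathrm{Br}_1(\mathring{\Sigma},q)_c|_{c=1}\cong \pi_1(\mathring{\Sigma},q)$. Both source and target of $\mathcal{F}$ are therefore $\hbar$-free $\mathbb{Z}$-modules, and it suffices to show $\mathcal{F}$ is a bijection $HW(T_q^*\mathring{\Sigma})\to \mathbb{Z}[\pi_1(\mathring{\Sigma},q)]$.

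First I would establish a bijection between generators of $HW(T_q^*\mathring{\Sigma})$ and elements of $\pi_1(\mathring{\Sigma},q)$. By Definition \ref{def-CW-boundary} the generators are time-$1$ Hamiltonian chords of $H_{V,f}$ from $T_q^*\mathring{\Sigma}$ to itself, which by construction lie over $\mathring{\Sigma}_0$; applying the inverse Legendre transform of Definition \ref{def-leg-L} (with respect to the metric $\tilde f^*g$) turns them into $V$-perturbed geodesic loops at $q$ in $(\mathring{\Sigma}_0,\tilde f^*g)$, which the isometry $\tilde f$ identifies with $V$-perturbed geodesic loops at $q$ in $(\mathring{\Sigma},g)$. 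Since $g$ is flat (when $\mathring{\Sigma}$ is $\mathbb{R}^2$ or $\mathbb{R}\times S^1$) or a complete finite-volume hyperbolic metric, and $V$ has small $W^{1,2}$-norm, Milnor \cite[Lemma 19.1]{milnor2016morse} (with a direct check in the flat case) guarantees a unique $V$-perturbed geodesic representative in each class of $\pi_1(\mathring{\Sigma},q)$, giving the desired bijection.

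Next I would invoke the $c=1$ case of \cite[Lemma 5.1]{abouzaid2012wrapped} in the partially-wrapped setting to conclude $\mathcal{F}(y)=[\mathcal{L}(y)]$ for every generator $y$. The local argument there identifies the unique rigid holomorphic triangle contributing to $\mathcal{F}(y)$ with a ``thin strip'' following the stable-manifold trajectory of the action functional $\mathcal{A}_{V,f}$ that limits to $\mathcal{L}(y)$, whose boundary on the zero section represents $[\mathcal{L}(y)]$. Two ingredients are needed to transport this argument to our setting: the regularity and compactness of $\mathcal{H}(\mathbf{q}',y,\mathbf{q})$, provided by Lemma \ref{lemma-dim-triangle} together with the punctured analogue of Lemma \ref{lemma-H-T-1} (namely Lemma \ref{lemma-compactness-boundary-H}); and the fact that all contributing curves remain inside $T^*\mathring{\Sigma}_0$, via the maximum principle applied to $\pi_s\circ u$ inside the proof of Lemma \ref{lemma-compactness-boundary-H}. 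Once these are in hand, $\mathcal{F}$ is a $\mathbb{Z}$-linear bijection of bases, hence an isomorphism of $\mathbb{Z}$-modules, and thus an algebra isomorphism by the punctured analogue of Proposition \ref{prop-algebra}.

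The main obstacle I expect is verifying that Abouzaid's pregluing and Newton-iteration construction of the thin strip respects the confinement to $T^*\mathring{\Sigma}_0$. Because all perturbations can be arranged to have support disjoint from the cusp end $\mathcal{N}$ and the chosen almost complex structures restrict to $J_\mathcal{N}$ on $T^*\mathcal{N}$, the maximum principle for $\pi_s\circ u$ continues to apply to the preglued curves, ruling out escape through the cusp and reducing the remaining analysis to the closed-surface model already handled in Lemma \ref{lemma-iso-h-0}.
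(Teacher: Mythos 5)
Your proposal is correct and follows essentially the same route as the paper: both reduce to the $\kappa=1$ argument of Lemma~\ref{lemma-iso-h-0}, which combines the Legendre-transform bijection between Hamiltonian chords and perturbed geodesic loops (unique in each homotopy class by Milnor), Abouzaid's Lemma~5.1 identifying $\mathcal{F}(y)$ with $[\mathcal{L}(y)]$, and a confinement argument for holomorphic curves via the open mapping theorem applied to $\pi_s\circ u$. The paper's actual proof is slightly more explicit about \emph{which} moduli spaces need the confinement check: it unpacks Abouzaid's Lemma~5.1 by naming the Abbondandolo--Schwarz chain isomorphism $\Theta$ (defined via the index-$0$ moduli space $\mathcal{M}_\Omega^+$) and the index-$1$ moduli space $\mathcal{C}$ Abouzaid uses to exhibit $\mathcal{F}$ as a homotopy inverse of $\Theta$, and then states that the same $\pi_s$-based barrier argument rules out escape through the cusp for curves in $\mathcal{M}_\Omega^+$ and $\mathcal{C}$; your version concentrates on the moduli space $\mathcal{H}(\mathbf{q}',y,\mathbf{q})$ and the thin-strip pregluing, which is the same technical content phrased at the level of $\mathcal{F}$ alone. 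One small slip: you write ``inverse Legendre transform'' for the map sending a Hamiltonian chord to a $V$-perturbed geodesic, but in the paper's Definition~\ref{def-leg-L} that direction is the Legendre transform $\mathcal{L}$ itself, while $\mathcal{L}^{-1}$ goes the other way; this is purely terminological and does not affect the argument.
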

\begin{proof}
    This is essentially Lemma 5.1 of \cite{abouzaid2012wrapped}, replacing $\Sigma$ by $\mathring{\Sigma}$. 
    
    Recall that Abbondandolo and Schwarz \cite[Theorem 3.1]{abbondandolo2006floer} constructed a
    chain isomorphism
    \begin{equation*}
        \Theta\colon CM_*(\Omega^{1,2}({\Sigma},q))\to CW(T_{q}^*{\Sigma})
    \end{equation*}
    by a specific moduli space $\mathcal{M}_\Omega^+$ of holomorphic curves of index 0 (see \cite[p.35]{abbondandolo2006floer}). Abouzaid then showed that $\mathcal{F}$ is a homotopy inverse of $\Theta$ by constructing another moduli space $\mathcal{C}$ of curves of index 1 (see \cite[p.33]{abouzaid2012wrapped}). 
    
    In the case of $\mathring{\Sigma}$, we define the moduli spaces $\mathcal{M}_\Omega^+$ and $\mathcal{C}$ in a similar manner. Again it suffices to show that no curves in $\mathcal{M}_\Omega^+$ or $\mathcal{C}$ can cross the vertical boundary of $T^*\mathring{\Sigma}_0$. We omit the details which are similar to the proofs of Lemma \ref{lemma-compactness-boundary} and Lemma \ref{lemma-compactness-boundary-H}.
\end{proof}



\printbibliography

\end{document}